\tikzset{->-/.style={decoration={markings,mark=at position #1 with {\arrow{>}}},postaction={decorate}}}
    \def\MR#1{}
\theoremstyle{plain}
\newtheorem{Theorem}{Theorem}[section]
\newtheorem{Lemma}[Theorem]{Lemma}
\newtheorem{Corollary}[Theorem]{Corollary}
\newtheorem{Proposition}[Theorem]{Proposition}
\newtheorem{Observation}[Theorem]{Observation}
\theoremstyle{definition}
\newtheorem{Assumptions and Discussion}[Theorem]{Assumptions and Discussion}
\newtheorem{Example}[Theorem]{Example}
\newtheorem{Definition}[Theorem]{Definition}
\newtheorem{Remark}[Theorem]{Remark}
\theoremstyle{remark}
\newtheorem{Setting}[Theorem]{Setting}
\newtheorem*{acknowledgment*}{Acknowledgment}
\definecolor{amaranth}{rgb}{0.9, 0.17, 0.31}
\definecolor{brown(web)}{rgb}{0.65, 0.16, 0.16}
\definecolor{darkcyan}{rgb}{0.0, 0.55, 0.55}
\definecolor{dodgerblue}{rgb}{0.12, 0.56, 1.0}
\def\codim{\operatorname{codim}}
\def\deg{\operatorname{deg}}
\def\depth{\operatorname{depth}}
\def\dim{\operatorname{dim}}
\def\e{\operatorname{e}}
\def\gens{\operatorname{gens}}
\def\Hilb{\operatorname{Hilb}}
\def\ini{\operatorname{in}} 
\def\ker{\operatorname{ker}}
\def\KK{{\mathbb K}}
\def\link{\operatorname{link}}
\def\NN{{\mathbb N}}
\def\uotimes{\mathbin{\underline{\otimes}}}
\def\ZZ{{\mathbb Z}}
\newcommand\bdu{{\bm u}}
\newcommand\bdv{{\bm v}}
\newcommand\bdw{{\bm w}}
\newcommand\bfI{\mathbf{I}}
\newcommand\bfT{\mathbf{T}}
\newcommand\calA{\mathcal{A}}
\newcommand\calB{\mathcal{B}}
\newcommand\calC{\mathcal{C}}
\newcommand\calD{\mathcal{D}}
\newcommand\calF{\mathcal{F}}
\newcommand\calH{\mathcal{H}}
\newcommand\calL{\mathcal{L}}
\newcommand\calP{\mathcal{P}}
\newcommand\calQ{\mathcal{Q}}
\newcommand\calS{\mathcal{S}}
\newcommand\calX{\mathcal{X}}
\newcommand\calZ{\mathcal{Z}}
\newcommand\frakm{\mathfrak{m}}
\newcommand{\Phan}{\operatorname{Phan}}
\def\r{\operatorname{r}}
\def\reg{\operatorname{reg}}
\begin{document}
\baselineskip=16pt

\title[Regularity and multiplicity]{Regularity and multiplicity of toric rings of three-dimensional Ferrers diagrams}

\author[Kuei-Nuan Lin, Yi-Huang Shen]{Kuei-Nuan Lin* and Yi-Huang Shen}

\thanks{AMS 2020 {\em Mathematics Subject Classification}.
    Primary 13F55, 
    13P10, 
    14M25, 
    16S37; 
    Secondary 14M05, 
    14N10, 
    05E40, 
    05E45} 

\thanks{Keyword: Special Fiber, Toric Rings, Blowup Algebras, Ferrers Graph, Castelnuovo--Mumford Regularity, Multiplicity}

\address{Department of Mathematics, The Penn State University, 
McKeesport, PA, 15132, USA}
\email{kul20@psu.edu}

\address{CAS Wu Wen-Tsun Key Laboratory of Mathematics, School of Mathematical Sciences, University of Science and Technology of China, Hefei, Anhui, 230026, P.R.~China}
\email{yhshen@ustc.edu.cn}

\begin{abstract}
   We investigate the Castelnuovo--Mumford regularity and the multiplicity of the toric ring associated with a three-dimensional Ferrers diagram. In particular, in the rectangular case, we provide direct formulas for these two important invariants. Then, we compare these invariants for an accompanying pair of Ferrers diagrams under some mild conditions and bound the Castelnuovo--Mumford regularity for more general cases.
\end{abstract}

\maketitle

\section{Introduction}
In this work, we will consider the Castelnuovo--Mumford regularity and the multiplicity of the toric varieties arising from the squarefree monomial ideals associated with the three-dimensional Ferrers diagrams.

The study of toric varieties is an important part of algebraic geometry.
From our point of view, it suffices to notice that the image of the rational map defined by an equi-generated monomial ideal is a toric variety. Meanwhile, the special fiber of the blowup algebra defined by an equi-generated monomial ideal is the toric variety 
associated with the monomial ideal in this case.  This type of approach has applications in statistics \cite{zbMATH05303649}, geometric modeling \cite{MR2039975}, and coding theory \cite{arXiv:1808.06487}.  In addition, the recent work of Cox, Lin, and Sosa \cite{arXiv:1806.08184} provides a connection of chemical reaction networks with the toric variety arising from the monomial ideal.  

In this paper, we will focus on the two most important invariants of toric varieties: the \emph{Castelnuovo--Mumford regularity} and the \emph{multiplicity} (also known as the \emph{degree}). 

Throughout this paper, $\KK$ is a field of characteristic zero, for simplicity. Recall that, for a finitely generated graded nonzero module $M$ over the polynomial ring $S=\KK[x_1,\dots,x_n]$, the \emph{Castelnuovo--Mumford regularity} of $M$, denoted by $\reg(M)$, is $\max\Set{j-i : \beta_{i,j}\ne 0}$, where the $\beta_{i,j}$'s are the graded Betti numbers of $M$. The regularity can be used to bound the degree of syzygy generators of the module $M$.

Another important invariant that we investigate here is the \emph{multiplicity} $\e(M)$ of $M=\bigoplus_k M_k$ with respect to the graded maximal ideal. Recall that $H(M,k):=\dim_{\KK}(M_k)$ is eventually a polynomial in $k$ of degree $\dim(M)-1$. The leading coefficient of this polynomial is of the form $\e(M)/(\dim(M)-1)!$ for the positive integer $\e(M)$. When $M$ is the homogeneous coordinate ring of a projective variety $X$, the multiplicity is just the degree of $X$. 

These two invariants measure the complexity of the toric variety {from different perspectives}. Finding a reasonable estimate of these two invariants is still wildly open \cite{MR1492542}.  Researchers in commutative algebra, algebraic geometry, geometry modeling, coding theory, and statistics are still working restlessly on investigating these invariants. 

Recently, Biermann, O'Keefe, and Van~Tuyl \cite{MR3595300} found bounds on Castelnuovo--Mumford regularity of toric rings associated with edge ideals arising from complete bipartite graphs. Later, Beyarslan, H\`a, and O’Keefe bound the regularity of toric rings associated with simple graphs using their induced subgraphs in \cite{arXiv:1703.08270}. Consequently, Galetto and his authors \cite{MR4026786} computed the graded Betti numbers for the toric ideal of graphs constructed by adjoining cycles to complete bipartite graphs. The regularity of linearly presented toric edge ideals related to the bipartite complement of a graph is investigated by Greif and McCullough in \cite{MR4123732}. Furthermore, graphs such that their associated toric rings have regularity $r$ and $h$-polynomials have degree $d$ are classified in \cite{MR4143385} by Favacchio and his coauthors.
From this list of recent work, we can see clearly that there are still many unknown cases deserving investigation, even with toric rings associated with  simple graphs.

Meanwhile, Eto gave a method for computing the multiplicity of monoid rings in \cite{MR1948662}.  
Gitler and Valencia \cite{MR2179639} related the multiplicity of the edge subring of a simple graph
with the volume of its edge polytope. 
{At the same time,}
Villarreal \cite[chapter 10]{MR3362802} presented sharp upper bounds for the multiplicity of edge subrings.

In general, not much is known when the toric ring is associated with monomial ideals of degree greater than $2$. One important reason is that finding the explicit defining equations of the toric rings is quite difficult in general. Even when the defining equations are known in some cases, the computation of regularity or multiplicity is still very involved; see, for example, the regularities of toric rings (fiber cones) associated with initial ideals of secant varieties of rational normal scrolls that we determined in \cite{LinShenCollect}.

In this work, we will continue our previous work in \cite{arXiv:1709.03251}. The toric variety that we consider here is associated with the squarefree monomial ideal $I_{\calD}$ where $\calD$ is a three-dimensional Ferrers diagram. The classical {two-dimensional} Ferrers diagram is an important object studied in combinatorics, and it has applications in permutation statistics \cite{MR2095860} and inverse rook problems \cite{MR0429578}. They are also known as Young diagrams and have important applications to the representation theory of symmetric and general linear groups, and to Schubert calculus; see, for example, \cite {MR3438312}. 

The regularity and multiplicity problem for the classical Ferrers diagram has been solved by Corso and Nagel in \cite{MR2457403}.  As a natural generalization, in our previous work \cite{arXiv:1709.03251}, we have shown that, under some mild conditions, the special fiber ideal $J_{\calD}$ associated with a three-dimensional Ferrers diagram $\calD$ has a squarefree quadratic initial ideal with respect to the lexicographic order. In addition, the accompanying Stanley--Reisner complex $\Delta(\calD)$ is pure vertex-decomposable.  The main purpose of the current paper is then to investigate the regularity and multiplicity of the special fiber ring $\calF(I_{\calD})$. Related key ingredients from our previous paper will be summarized here in Section 2. 

By some algebraic argument, we can transfer the calculation of these two vital invariants of the special fiber ring $\calF(I_{\calD})$ to those of the associated Stanley--Reisner ring $\KK[\Delta(\calD)]$; see \Cref{Cor:Fib-SR}. Notice that the vertex-decomposability of the associated Stanley--Reisner complex $\Delta(\calD)$ provides naturally a short exact sequence, on which both invariants behave nicely; see \Cref{rmk:alg}. Therefore, one can quickly extract an algorithm for calculating {the reduction number} of $I_\calD$ as well as the regularity and multiplicity of the special fiber ring $\calF(I_{\calD})$ associated with the three-dimensional Ferrers diagram $\calD$.

When the given three-dimensional Ferrers diagram $\calD=[a]\times[b]\times[c]$ is in a full rectangular shape, we provide a clean formula for {the reduction number,} the regularity, and the multiplicity of the special fiber ring in Section 4.  On the other hand, we do not intend to give a closed formula for
them in the general case. Even for the multiplicity in the two-dimensional case, the formula given in \cite[Corollary 5.6]{MR2457403} is already highly entangled. Therefore, the main object of this paper is to give combinatorically reasonable upper bounds for these two invariants.

We exemplify two approaches.  In Section 5, we consider an accompanying pair of Ferrers diagrams $\calD_1\subseteq \calD_2$. To compare these invariants on these two diagrams, we fall back on the aforementioned shedding order of the associated vertex-decomposable Stanley--Reisner complexes $\Delta(\calD_1)$ and $\Delta(\calD_2)$. Since this approach requires a synchronizing comparison along the decomposing process, we need a slightly stronger condition, given in \Cref{def:strongPP}.
In the second approach, an identical estimate can be achieved for {the reduction number and} the regularity under the same (weaker) condition as assumed in \cite{arXiv:1709.03251}. No doubt, the proof, given in Section 6, invites a more intricate combinatorial maneuver.

\section{Preliminaries}
Throughout this paper, when $n$ is a non-negative integer, we will follow the common convention and denote the set $\{1,2,\dots,n\}$ simply as $[n]$.

Let $\calD$ be a nonempty set of finite lattice points in $\ZZ_+^3$. Let 
\[
    m\coloneqq \max\Set{i:(i,j,k)\in\calD}, \quad
    n\coloneqq \max\Set{j:(i,j,k)\in\calD} \quad \text{and} \quad
    p\coloneqq \max\Set{k:(i,j,k)\in\calD}.
\]  
We associate with  $\calD$ the polynomial ring 
\[
    R=\KK[x_1,\dots,x_m,y_1,\dots,y_n,z_1,\dots,z_p]
\]
and the monomial ideal 
\[
    I_{\calD}\coloneqq (x_iy_jz_k:(i,j,k)\in \calD)\subset R.
\]
This ideal will be called the \emph{defining ideal} of $\calD$.

If we write $\frakm$ for the graded maximal ideal of $R$, the \emph{special
    fiber ring} of $I_{\calD}$ is
\[
    \calF(I_{\calD})\coloneqq \bigoplus_{l\ge 0} I_{\calD}^{l}/\frakm I_{\calD}^l \cong
    R[I_{\calD}t] \otimes_{R} R/\frakm.
\]
Sometimes, we also call it the \emph{toric ring} of $I_{\calD}$ and denote it by $\KK[I_{\calD}]$. Let 
\[
    \KK[\bfT_{\calD}]\coloneqq \KK[T_{i,j,k}: (i,j,k)\in \calD]
\]
be the polynomial ring in the variables $T_{i,j,k}$ over the field $\KK$.  Consider the map $\varphi:\KK[\bfT_{\calD}]\to R$, given by $T_{i,j,k}\to x_iy_jz_k$, and extend algebraically. Then $\calF(I_{\calD})$ is canonically isomorphic to $\KK[\bfT_{\calD}]/\ker(\varphi)$. We will denote the kernel ideal $\ker(\varphi)$ by $J_{\calD}$ and call it the \emph{special fiber ideal} of $I_{\calD}$. Sometimes, we also call it the \emph{toric ideal} of $I_{\calD}$. It is well-known that $J_{\calD}$ is a graded binomial ideal; see, for instance, \cite[Corollary 4.3]{MR1363949} or \cite{MR2611561}.  We also observe that $\calF(I_{\calD})$, being isomorphic to a subring of $R$, is a domain. Hence $J_{\calD}$ is a prime ideal.

\begin{Definition}
    \label{3F}
    Let $\calD\ne \varnothing$ be a finite {set} of lattice points in $\ZZ_+^3$. 
    \begin{enumerate}[a]
        \item 
            We will call 
            \[
                a_{\calD}\coloneqq \left|\Set{i:(i,j,k)\in\calD}\right|, \quad
                b_{\calD}\coloneqq \left|\Set{j:(i,j,k)\in\calD}\right| \text{ and }
                c_{\calD}\coloneqq \left|\Set{k:(i,j,k)\in\calD}\right|
            \]
            the \emph{essential length, width, and height} of $\calD$ respectively. 
        \item The set $\calD$ is called a \emph{three-dimensional Ferrers diagram} if for each $(i_0,j_0,k_0)\in \calD$, and for every positive integers $i\le i_0$, $j\le j_{0}$ and $k\le k_0$, one has $(i,j,k)\in \calD$.
    \end{enumerate}
\end{Definition}

\begin{Definition} 
    Let $\calD$ be a three-dimensional Ferrers diagram.  For each $i \in [a_{\calD}]$, let \\$b_i=\max\Set{j: (i,j,1)\in \calD}$ and $c_i=\max\Set{k: (i,1,k)\in \calD}$. Then the \emph{projection} of the $x=i+1$ layer is the set
    \[
        \Set{(i,j,k)\in \ZZ_+^3: j\le b_{i+1} \text{ and }k\le c_{i+1}}
    \]
    when $i<a_\calD$.
    And $\calD$ is said to satisfy the \emph{projection property} if the $x=i$ layer covers the projection of the $x=i+1$ layer for each $i \in [a_{\calD}-1]$, i.e., the following equivalent conditions hold:
    \begin{enumerate}[a]
        \item $(i,b_{i+1},c_{i+1})\in \calD$;
        \item if $(i+1,j_1,k_1)\in \calD$ and $(i+1,j_2,k_2)\in \calD$, then $(i,j_1,k_2)\in \calD$.
    \end{enumerate}
\end{Definition}

\begin{Definition}
    \label{2-minors}
    Let $\calD\ne \varnothing$ be a finite {set} of lattice points in $\ZZ_+^3$.  For
    $\bdu=(i_1,j_1,k_1)$ and $\bdv=(i_2,j_2,k_2)$ in $\calD$, define 
    \[
        \bfI_{2,x}(\bdu,\bdv)\coloneqq 
        \begin{cases}
            T_{\bdu}T_{\bdv}-T_{i_2,j_1,k_1}T_{i_1,j_2,k_2}, & \text{if
                $(i_2,j_1,k_1), (i_1,j_2,k_2)\in \calD$},\\
            0, & \text{otherwise}.
        \end{cases}
    \]
    When this is a nonzero binomial, we will say \emph{switching the $x$-coordinates} is allowed between $\bdu$ and $\bdv$.
    We can similarly define $\bfI_{2,y}$ and $\bfI_{2,z}$. Now, let
    \[
        \bfI_2(\calD)\coloneqq \Big(\bfI_{2,x}(\bdu,\bdv),\bfI_{2,y}(\bdu,\bdv),
        \bfI_{2,z}(\bdu,\bdv): \bdu, \bdv\in \calD\Big)\subset
        \KK[\bfT_{\calD}],
    \]
    and call it the \emph{$2$-minors ideal} of $\calD$.  
\end{Definition}

\begin{Example}
    \label{MinExam}
    Let $\calD$ be a three-dimensional Ferrers diagram 
    consisting
    the following lattice points 
    \begin{align*}
        (1,1,1),
        (1,1,2), 
        (1,1,3), 
        (1,2,1), 
        (1,2,2), 
        (1,2,3), 
        (1,3,1), 
        (1,3,2),
        (1,3,3),\\ 
        (2,1,1), 
        (2,1,2), 
        (2,1,3), 
        (2,2,1),
        (2,2,2),
        (2,3,1), 
        (3,1,1), 
        (3,1,2), 
        (3,2,1). 
    \end{align*}
    This diagram satisfies the projection property by easy verification. 
    We can use the base ring 
    \begin{align*}
        S&=\KK[
        T_{1,1,1},
        T_{1,1,2}, 
        T_{1,1,3}, 
        T_{1,2,1}, 
        T_{1,2,2}, 
        T_{1,2,3}, 
        T_{1,3,1}, 
        T_{1,3,2},
        T_{1,3,3},\\
        & \qquad\qquad  T_{2,1,1}, 
        T_{2,1,2}, 
        T_{2,1,3}, 
        T_{2,2,1},
        T_{2,2,2},
        T_{2,3,1}, 
        T_{3,1,1}, 
        T_{3,1,2}, 
        T_{3,2,1}], 
    \end{align*}
    and apply the canonical epimorphism. The toric ideal $J_{\calD}$ in $S$ contains, among others, the binomials $ T_{2,1,2}T_{3,2,1}-T_{3,1,2}T_{2,2,1}$ \textup{(}an $x$-coordinate switch\textup{)}, $ T_{1,2,1}T_{1,3,2}-T_{1,3,1}T_{1,2,2}$ \textup{(}a $y$-coordinate switch\textup{)}, and $ T_{1,1,1}T_{1,2,2}-T_{1,1,2}T_{1,2,1}$ \textup{(}a $z$-coordinate switch\textup{)}. 
\end{Example}

If $\calD$ is a three-dimensional Ferrers diagram that satisfies the projection property, it is proved in \cite[Corollary 2.14 and Theorem 6.1]{arXiv:1709.03251} that the toric ideal $J_{\calD}$ coincides with the $2$-minors ideal $\bfI_{2}(\calD)\subseteq \KK[\bfT_{\calD}]$. Furthermore, if we order the $T$-variables lexicographically with respect to their subscripts and apply the lexicographic monomial order to $\KK[\bfT_{\calD}]$ (we will call it  \emph{the lexicographic order} for short), the $2$-minors in $\bfI_{2}(\calD)$ provide a minimal Gr\"obner basis of $J_{\calD}$. On the other hand,  for arbitrary integer $p\ge 4$, we can consider the three-dimensional Ferrers diagram $\calD$ minimally generated (governed) by the following extremal lattices points:
\begin{gather*}
    (1,2,p-1),(2,3,p-2),(3,4,p-3),\dots(p-1,p,1),(p,1,\underline{2}),\\
    (2,1,p-1),(3,2,p-2),(4,3,p-3),\dots,(p,p-1,1),(1,p,\underline{2}).
\end{gather*}
Then, we can actually prove that the minimal generating set of the special fiber ideal $J_D$ contains a degree $p$ binomial, generalizing the phenomenon observed in \cite[Example 2.4]{arXiv:1709.03251} for degree three.

As the main result of \cite{arXiv:1709.03251}, we proved that the special fiber ring $\calF(I_{\calD})$ is a Koszul Cohen--Macaulay normal domain, provided that $\calD$ is a three-dimensional Ferrers diagram that satisfies the projection property. Its proof is quite involved and depends on the following two key ingredients.
\begin{enumerate}[a]
    \item For both the Cohen--Macaulayness and the primeness of the ideal $\bfI_2(\calD)$, we had to use a common elaborate induction process, which will be explained afterward.
    \item To establish the Cohen--Macaulayness of $\bfI_2(\calD)$, we actually proved that the Stanley--Reisner complex $\Delta(\calD)$ associated with  the squarefree initial ideal $\ini(\bfI_2(\calD))$ is pure vertex-decomposable. Furthermore, the above induction process induces a shedding order for that purpose.
\end{enumerate}

To introduce the aforementioned induction process, we still need some preparation.

\begin{Definition}  
    \label{def:alpha-beta-gamma}
    For each $\bdu=(i_0,j_0,k_0)\in \calD$, let
    \[
        \alpha_{\calD}(\bdu)\coloneqq \max\Set{i: (i,j_0,k_0)\in \calD}.
    \]
    In a similar vein, we can define $\beta_{\calD}(\bdu)$ and $\gamma_{\calD}(\bdu)$. Meanwhile, we use the superscript to denote the corresponding $x$ layers. For instance,
    \[
        \calD^1\coloneqq \Set{(1,j,k)\in \calD} \quad \text{ and } \quad
        \calD^{\ge d}\coloneqq \Set{(i,j,k)\in \calD: i\ge d}.
    \] 
\end{Definition}

It is clear that when $\bdu=(1,1,1)$ and $\calD$ is a three-dimensional Ferrers diagram, then $\alpha_{\calD}(\bdu)$, $\beta_{\calD}(\bdu)$, and $\gamma_{\calD}(\bdu)$ are $a_\calD$, $b_\calD$, and $c_\calD$ respectively; see also \Cref{3F}.

\begin{Definition}
    \label{def:zones}
    Let $\calD$ be a three-dimensional Ferrers diagram.  Take arbitrary $\bdu=(i_0,j_0,k_0)\in \calD$ and for simplicity, write 
    \[
        \alpha=\alpha_{\calD}(\bdu),\quad 
        \beta=\beta_{\calD}(\bdu)\quad \text{and} \quad
        \gamma=\gamma_{\calD}(\bdu).
    \]
    Then we can divide $\calD^{\ge i_0}$ into the following six zones:
    \begin{align*}
        \calZ_1(\calD,\bdu)\coloneqq &\Set{(i,j,k)\in \calD^{\ge i_0} : 1\le j\le j_0 \text{ and } k>\gamma},\\
        \calZ_2(\calD,\bdu)\coloneqq &\Set{(i,j,k)\in \calD^{\ge i_0} : 1\le j\le j_0 \text{ and }k_0<k\le \gamma},\\ 
        \calZ_3(\calD,\bdu)\coloneqq &\Set{(i,j,k)\in \calD^{\ge i_0} : 1\le j\le j_0 \text{ and }1\le k\le k_0},\\ 
        \calZ_4(\calD,\bdu)\coloneqq &\Set{(i,j,k)\in \calD^{\ge i_0} : j_0< j\le \beta \text{ and }k_0<k\le \gamma},\\ 
        \calZ_5(\calD,\bdu)\coloneqq &\Set{(i,j,k)\in \calD^{\ge i_0} : j_0< j\le \beta \text{ and }1\le k\le k_0},\\ 
        \calZ_6(\calD,\bdu)\coloneqq &\Set{(i,j,k)\in \calD^{\ge i_0} : j>\beta \text{ and }1\le k< k_0}.  
    \end{align*}
    It is clear that $\calD^{\ge i_0}$ is the disjoint union of the above six zones. In the subsequent discussion, they will be called the $\calZ$-zones with respect to $\calD$ and $\bdu$. We will omit some of the parameters if they are clear from the context. \Cref{Zone} gives the idea of the division of $\calD^{i_0}$ with respect to these zones.

    \begin{figure}[htb] 
        \begin{center}
            \scalebox{1.4}{
                \begin{tikzpicture}[thick, scale=0.5, every node/.style={scale=0.6}]
                    \draw (7,0) node [below right]{$y$};
                    \draw (0,6) node [above left]{$z$};

                    \draw [pattern=crosshatch] (2.5,2.5)--(2,2.5)--(2,2)--(2.5,2)--cycle; 
                    \node [above right] at (4,5) {$(j_0,k_0)$}; 
                    \draw [->, thin] (2.25,2.25) to [bend left] (4,5);

                    \path (0,0)--(0,4)  node [below left]{$\gamma$};
                    \path (0,-0.2)--(5,-0.2) node [below left]{$\beta$};

                    \draw (0,4)--(2,4)--(2,5)--(1.5,5)--(1.5,5.5)--(0.5,5.5)--(0.5,6)--(0,6)--cycle;
                    \node [right] at (0.5,4.7) {$\mathcal{Z}_1$};

                    \draw (0,2.5)--(2.5,2.5)--(2.5,4)--(0,4)--cycle;
                    \node [right] at (0.5,3.3) {$\mathcal{Z}_2$};

                    \draw (0,0)--(2.5,0)--(2.5,2.5)--(0,2.5)--cycle;
                    \node [right] at (0.5,1.2) {$\mathcal{Z}_3$};

                    \draw (2.5,2.5)--(5,2.5)--(5,3.5)--(4,3.5)--(4,4)--(2.5,4)--cycle;
                    \node [right] at (3.2,3.3) {$\mathcal{Z}_4$};

                    \draw (2.5,0)--(5,0)--(5,2.5)--(2.5,2.5)--cycle;
                    \node [right] at (3.2,1.2) {$\mathcal{Z}_5$};

                    \draw (5,0)--(7,0)--(7,1)--(6.5,1)--(6.5,2)--(5,2)--cycle;
                    \draw (5.9,1.2) node {$\mathcal{Z}_6$};
                \end{tikzpicture}}
        \end{center}
        \caption{$\calZ$-zones with respect to $\calD$ and $\bdu$}\label{Zone}
    \end{figure}
\end{Definition}

\begin{Definition}
    Let $\calD$ be a nonemtpy subset of $\ZZ_+^3$. We say that a total order $\prec$ on $\calD$ is a \emph{quasi-lexicographic order} if it satisfies the following two conditions. 
    \begin{enumerate}[a]
        \item 
            The points in $\calD^i$ precede the points in $\calD^{i+1}$ with respect to $\prec$ for each $i\in[a_{\calD}-1]$.
        \item  
            For distinct $\bdu=(i,j_1,k_1)$ and $\bdv=(i,j_2,k_2)$ in $\calD^{i}$, if $j_1\le j_2$ and $k_1\le k_2$, then $\bdu$ precedes $\bdv$ with respect to $\prec$. 
    \end{enumerate}
\end{Definition}

Obviously, the common lexicographic order is a quasi-lexicographic order. 

\begin{Definition}
    \label{def:initial-part}
    Let $\prec$ be a quasi-lexicographic order on a nonemtpy subset $\calD$ of $\ZZ_+^3$.
    Given a lattice point $\bdu\in \calD$, let $\calA_{\bdu}$ be the subset obtained from $\calD$ by removing the points before $\bdu$ with respect to $\prec$. We also write $\calA_{\bdu}^{+}\coloneqq \calA_{\bdu}\setminus \bdu$. 
\end{Definition}

\begin{Setting}
    [Induction Order]
    \label{set:IO}
    Suppose that $\calD$ is a three-dimensional Ferrers diagram with $a_{\calD}\ge 2$. We adopted in \cite{arXiv:1709.03251} the following total order $\prec_\calD$ on $\calD^1$ for considering both the vertex-decomposability of the associated Stanley--Reisner complex $\Delta(\calD)$ and the primeness of $\bfI_2(\calD)$.
    By abuse of terminology, we call this total order the \emph{induction order} with respect to $\calD$. 

    Let $\calC_1=\calC_1(\calD)\coloneqq \Set{(1,j,k)\in \calD: k\le c_{\calD^{\ge 2}}}$ be the points in the \emph{first stage}, and $\calC_2=\calC_2(\calD)\coloneqq \calD^1\setminus \calC_1$ be the points in the \emph{second stage}. For $\prec_\calD$, we require the following.
    \begin{enumerate}[a]
        \item The points in the first stage precede the points in the second stage with respect to $\prec_\calD$.
        \item The restriction of $\prec_\calD$ to $\calC_1$ is the lexicographic order.
        \item Consider the symmetry operation $\calS: \ZZ_+^3\to \ZZ_+^3$ by sending $(i,j,k)$ to $(i,k,j)$.  We will also call this operation as a \emph{flip}. Then the restriction of $\prec_\calD$ to $\calC_2$ corresponds to the lexicographic order on $\calS(\calC_2)$.
    \end{enumerate}
    The total order $\prec_\calD$ on $\calD^1$ is completely determined by the three requirements just stated. {We may also extend this order to the whole $\calD$ by arranging the points in $\calD^{\ge 2}$ in a similar fashion and putting them after the points in $\calD^1$. 
        The induction order $\prec_\calD$ constructed by this approach is a quasi-lexicographic order.}

    Figure \ref{Fig:InductionOrder} gives an idea of how this proceeds on the $x=1$ layer. The first point is $\circ=(1,1,1)$. When $c_{\calD^{\ge 2}}<c_{\calD}$, the last point is $\bullet=(1,\beta_{\calD}( (1,1,c_{\calD})),c_{\calD})$ of $\calD$, using the notation we introduced in \Cref{def:alpha-beta-gamma}. Otherwise, $c_{\calD^{\ge 2}}=c_{\calD}$ with the second stage disappears and the last point is $(1,b_{\calD},\gamma_{\calD}( (1,b_{\calD},1)))$ of $\calD$.  Meanwhile, in Figure \ref{Fig:InductionOrder}, $\varstar$ denotes the last point in the first stage while $\square$ denotes the first point in the second stage.  

    \begin{figure}[h] 
        \begin{center}
            \scalebox{1.4}{
                \begin{tikzpicture}[scale=0.6, every node/.style={scale=0.6}] 

                    \fill [gray!30] (0,3.5)--(4,3.5)--(4,4)--(0,4)--cycle; 

                    \draw [line width=1.2pt]  (0,0)--(7,0)--(7,1)--(6.5,1)--(6.5,2)--(5,2)--(5,2.5)--(5,3.5)--(4,3.5)--(4,4)--(2.5,4)--(2,4)--(2,5)--(1.5,5)--(1.5,5.5)--(1,5.5)--(1,6)--(0,6)--cycle;

                    \coordinate[label=right:{$y$}] (y) at (7,0);
                    \coordinate[label=above:{$z$}] (z) at (0,6);
                    \coordinate[label=below left:{$c_{\mathcal{D}^{\ge 2}}$}] (gamma) at (0,4);
                    \coordinate[label={$\circ$}] (O1) at (0.25,0.05);
                    \coordinate[label={$\varstar$}] (T2) at (6.75,0.5);
                    \coordinate[label={$\bullet$}] (T1) at (0.75,5.5);
                    \coordinate[label={$\square$}] (O2) at (0.25,3.99);

                    \draw [->] (0.25,0.45) -- (0.25,3.75);
                    \draw[ thin,  dotted,->-=.8] (0.25,3.75)--(0.5,3.75)--(0.5,0.25)--(0.75,0.25);
                    \draw [->] (0.75,0.25) -- (0.75,3.75);
                    \draw[ thin,  dotted,->-=.8] (0.75,3.75)--(1,3.75)--(1,0.25)--(1.25,0.25);
                    \draw [->] (1.25,0.25) -- (1.25,3.75);
                    \draw[ thin,  dotted,->-=.8] (1.25,3.75)--(1.5,3.75)--(1.5,0.25)--(1.75,0.25);
                    \draw [->] (1.75,0.25) -- (1.75,3.75);
                    \draw[ thin,  dotted,->-=.8] (1.75,3.75)--(2,3.75)--(2,0.25)--(2.25,0.25);
                    \draw [->] (2.25,0.25) -- (2.25,3.75);
                    \draw[ thin,  dotted,->-=.8] (2.25,3.75)--(2.5,3.75)--(2.5,0.25)--(2.75,0.25);
                    \draw [->] (2.75,0.25) -- (2.75,3.75);
                    \draw[ thin,  dotted,->-=.8] (2.75,3.75)--(3,3.75)--(3,0.25)--(3.25,0.25);
                    \draw [->] (3.25,0.25) -- (3.25,3.75);
                    \draw[ thin,  dotted,->-=.8] (3.25,3.75)--(3.5,3.75)--(3.5,0.25)--(3.75,0.25);
                    \draw [->] (3.75,0.25) -- (3.75,3.75);
                    \draw[ thin,  dotted,->-=.8] (3.75,3.75)--(4,3.75)--(4,0.25)--(4.25,0.25);
                    \draw [->] (4.25,0.25) -- (4.25,3.25);
                    \draw[ thin,  dotted,->-=.8] (4.25,3.25)--(4.5,3.25)--(4.5,0.25)--(4.75,0.25);
                    \draw [->] (4.75,0.25) -- (4.75,3.25);
                    \draw[ thin,  dotted,->-=.8] (4.75,3.25)--(5,3.25)--(5,0.25)--(5.25,0.25);
                    \draw [->] (5.25,0.25) -- (5.25,1.75);
                    \draw[ thin,  dotted,->-=.8] (5.25,1.75)--(5.5,1.75)--(5.5,0.25)--(5.75,0.25);
                    \draw [->] (5.75,0.25) -- (5.75,1.75);
                    \draw[ thin,  dotted,->-=.8] (5.75,1.75)--(6,1.75)--(6,0.25)--(6.25,0.25);
                    \draw [->] (6.25,0.25) -- (6.25,1.75);
                    \draw[ thin,  dotted,->-=.8] (6.25,1.75)--(6.5,1.75)--(6.5,0.25)--(6.75,0.25);
                    \draw [->] (6.75,0.25) -- (6.75,0.55);

                    \draw[->] (0.4,4.25)--(1.75,4.25);
                    \draw[thin, dotted,->-=.8] (1.75,4.25)--(1.75,4.5)--(0.25,4.5)--(0.25,4.75);
                    \draw[->] (0.25,4.75)--(1.75,4.75);
                    \draw[thin, dotted,->-=.8] (1.75,4.75)--(1.75,5)--(0.25,5)--(0.25,5.25);
                    \draw[->] (0.25,5.25)--(1.25,5.25);
                    \draw[thin, dotted] (1.25,5.25)--(1.25,5.5)--(0.25,5.5)--(0.25,5.75);
                    \draw[->](0.25,5.75)--(0.6,5.75);

                    \draw [decorate,decoration={brace, mirror}] (7.5,0.25) - - node[right] {\ 1st stage} (7.5,3.75);
                    \draw [decorate,decoration={brace, mirror}] (7.5,4.25) - - node[right] {\ 2nd stage} (7.5,5.75);
                \end{tikzpicture}
            }
        \end{center}
        \caption{Induction Order}\label{Fig:InductionOrder}
    \end{figure}   
\end{Setting}

\begin{Definition}
    \label{def:essentially}
    Let $\calD$ be a nonemtpy subset of $\ZZ_+^3$.  
    \begin{enumerate}[a]
        \item  Suppose that $\calD^{i_0}=\varnothing$
            for some $i_0\in\ZZ_{+}$. Then, we can remove the whole $x=i_0$ layer from the ambient space, and consider a new set of lattice points 
            \[
                \calD':=\Set{(i,j,k)\in \calD: i<i_0}\cup \Set{(i-1,j,k): (i,j,k)\in \calD\text{ with }i>i_0}.
            \]
            Deriving $\calD'$ from $\calD$ above will be called a \emph{reduction along the $x$-direction}. It is not difficult to see that $\calF(I_\calD)\cong \calF(I_{\calD'})$.
        \item One can similarly define reductions along the $y$-direction and $z$-direction. In this paper, when we say a set $\calD$ \emph{essentially} satisfies some property $(P)$, we mean that after a finite sequence of reductions, the derived new set satisfies the property $(P)$.
    \end{enumerate}
\end{Definition}

\begin{Example}
    For example, we may have 
    \[
        \calD=\Set{(1,1,1),(1,1,2),(1,2,1),(3,1,1),(3,1,2)}.
    \]
    Since $\calD^2=\varnothing$, we can apply a reduction along the $x$-direction to get \[
        \calD'=\Set{(1,1,1),(1,1,2),(1,2,1),(2,1,1),(2,1,2)}
    \]
    with respect to $i_0=2$.
    The essential length, width and height do not change: $a_\calD=a_{\calD'}$, $b_\calD=b_{\calD'}$ and  $c_\calD=c_{\calD'}$.
    But visually, this $\calD'$ is more concise than the original $\calD$.
\end{Example}

\section{Considering the Hilbert polynomials}
Let $M\ne 0$ be a finitely generated graded module over the polynomial ring $S=\KK[x_1,\dots,x_n]$. 
For simplicity, the field $\KK$ has characteristic zero. Let $\Hilb_M(t)\coloneqq \sum_{k\in \ZZ}H(M,k)t^{k}$ be the Hilbert series of $M$. It is well-known that 
\[
    \Hilb_M(t)=P_M(t)/(1-t)^d \quad \text{with}\quad P_M(1)\ne 0
\]
for some \emph{$h$-polynomial} $P_M(t)\in \ZZ[t,t^{-1}]$.
The number $d$ is the Krull dimension of $M$ while $P_M(1)=\e(M)>0$ is the \emph{multiplicity} of $M$;  see \cite[Section 6.1.1]{MR2724673}.

\begin{Lemma}
    [{\cite[Proposition 7.43]{MR2508056}}]
    Let $M$ be a graded Cohen-Macaulay module of dimension $d$ over the polynomial ring $S$. Let $P_M(t)$ be the $h$-polynomial of the Hilbert series of $M$. Then $\reg(M)=\deg(P_M(t))$.
\end{Lemma}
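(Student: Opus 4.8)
The plan is to reduce to the Artinian case by killing a maximal $M$-regular sequence of linear forms, and then to observe that for an Artinian graded module both $\reg$ and $\deg(P_M(t))$ equal the top nonvanishing degree. Since $\KK$ has characteristic zero it is infinite, and $M$ is Cohen--Macaulay of dimension $d$ (so $\depth M=d$); hence a generic choice of $d$ linear forms $\ell_1,\dots,\ell_d\in R_1$ is an $M$-regular sequence. If $d=0$ there is nothing to reduce; in general I would set $N\coloneqq M/(\ell_1,\dots,\ell_d)M$, a graded module of dimension $0$, i.e.\ Artinian, and nonzero (it surjects onto $M/\fm M\ne 0$, since the $\ell_i$ lie in $\fm$). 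Write $M_i\coloneqq M/(\ell_1,\dots,\ell_{i-1})M$, so $M_1=M$ and $M_{d+1}=N$.

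First I would track the Hilbert series. For each $i$, multiplication by $\ell_i$ on $M_i$ is injective, giving a short exact sequence $0\to M_i(-1)\xrightarrow{\ell_i} M_i\to M_{i+1}\to 0$, whence $\Hilb_{M_{i+1}}(t)=(1-t)\Hilb_{M_i}(t)$. Iterating $d$ times yields $\Hilb_N(t)=(1-t)^d\Hilb_M(t)=P_M(t)$. Since $N$ is Artinian, $\Hilb_N(t)$ is itself a (Laurent) polynomial, so $P_M(t)=\Hilb_N(t)$ and therefore $\deg P_M(t)=\max\Set{k\mid N_k\ne 0}$.

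Next I would show that regularity is unchanged, $\reg M=\reg N$; it suffices to treat a single linear form $\ell$ that is a nonzerodivisor on a graded module $M'$. Applying $\tor^R_\bullet(-,\KK)$ to $0\to M'(-1)\xrightarrow{\ell} M'\to M'/\ell M'\to 0$ produces a long exact sequence in which the maps $\tor^R_i(M',\KK)(-1)\to\tor^R_i(M',\KK)$ are induced by multiplication by $\ell\in\fm$ on a $\KK$-vector space, hence are zero. So the sequence splits into
\[
0\to \tor^R_i(M',\KK)_j\to \tor^R_i(M'/\ell M',\KK)_j\to \tor^R_{i-1}(M',\KK)_{j-1}\to 0,
\]
giving $\beta_{i,j}(M'/\ell M')=\beta_{i,j}(M')+\beta_{i-1,j-1}(M')$. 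Both contributions have the same value of $j-i$, so $\reg(M'/\ell M')=\reg(M')$. Applying this $d$ times along the regular sequence gives $\reg N=\reg M$.

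Finally I would compute $\reg N$ for the Artinian module $N$: every element of $N$ is annihilated by a power of $\fm$, so $H^0_{\fm}(N)=N$, while $H^i_{\fm}(N)=0$ for $i>0$ since $\dim N=0$. By the standard local-cohomology description of Castelnuovo--Mumford regularity, $\reg N=\max_i\Set{i+\max\Set{k\mid H^i_{\fm}(N)_k\ne 0}}=\max\Set{k\mid N_k\ne 0}$. Combining the three steps, $\reg M=\reg N=\max\Set{k\mid N_k\ne 0}=\deg P_M(t)$, as claimed. The main obstacle is the invariance step $\reg(M'/\ell M')=\reg(M')$ under reduction by an $M'$-regular linear form: the $\tor$ long-exact-sequence splitting is the technical heart, and it is exactly there that one uses that $\ell$ is simultaneously a nonzerodivisor and of degree one (so the extra Betti numbers $\beta_{i-1,j-1}$ carry the same $j-i$). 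The only other delicate point is having an infinite field available to produce the $M$-regular linear sequence; in characteristic zero this is automatic, but in general one would first base change along $\KK\to\overline{\KK}$ (or $\KK\to\KK(u)$), which alters neither $\reg$ nor the numerator of the Hilbert series.
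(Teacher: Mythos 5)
Your proof is correct, and it is genuinely a proof, whereas the paper simply quotes this statement from Bruns--Gubeladze \cite{MR2508056}*{Proposition 7.43} without argument. All three steps check out: generic linear forms do form an $M$-regular sequence (the field is infinite, $\depth M=\dim M=d$, and each quotient $M_i$ is again Cohen--Macaulay of positive depth until the last step); the Hilbert-series bookkeeping $(1-t)^d\Hilb_M(t)=\Hilb_N(t)=P_M(t)$ is right, including $P_M(1)=\dim_{\KK}N>0$; and the Tor-splitting argument is sound because $\tor^R_i(M',\KK)$ is killed by $\frakm$, so $\beta_{i,j}(M'/\ell M')=\beta_{i,j}(M')+\beta_{i-1,j-1}(M')$ with both contributions having the same value of $j-i$ and no possible cancellation. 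The textbook route behind the citation is different in flavor: for a Cohen--Macaulay module only $H^d_{\frakm}(M)$ is nonzero, so $\reg M=a(M)+d$, while the $a$-invariant is the degree of $\Hilb_M(t)$ as a rational function, i.e.\ $\deg P_M(t)-d$; your reduction to the Artinian case trades that local-duality computation for the elementary Betti-number invariance along a linear regular sequence. The one place you still lean on nontrivial machinery is the final step, where you invoke the local-cohomology description of regularity for the Artinian module $N$; that is a standard theorem and acceptable, but if you want to stay entirely within the paper's Betti-number definition you can instead compute with the Koszul complex: $(N\otimes_R K_i)_j\cong N_{j-i}^{\binom{n}{i}}$ gives $\beta_{i,j}(N)=0$ for $j-i>\max\{k\mid N_k\ne 0\}$, and $\tor^R_n(N,\KK)_j\cong\soc(N)_{j-n}$ gives the matching lower bound, so $\reg N=\max\{k\mid N_k\ne 0\}$ directly.
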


\begin{Lemma}
    \label{reg-ini-reg}
    Let $J$ be a homogeneous ideal of $S$. If with respect to some monomial order of $S$, the initial ideal $\ini(J)$ is Cohen--Macaulay, then $\reg(S/J)=\reg(S/\ini(J))$.  
\end{Lemma}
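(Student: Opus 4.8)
The plan is to reduce the claim to the preceding lemma (Proposition 7.43) via the standard Gröbner degeneration, so that the only genuine input is that Cohen--Macaulayness is inherited by $S/J$ from $\ini(J)$.

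First I would record the flat-family picture. For the given monomial order, pick a weight vector $w$ that represents this order on the (finitely many) relevant monomials, and let $\widetilde{J}\subseteq S[t]$ be the $w$-homogenization of $J$. Then $S[t]/\widetilde{J}$ is free over $\KK[t]$, its general fiber is $S/J$ and its special fiber at $t=0$ is $S/\ini(J)$; in particular the two fibers have the same Hilbert function, so $\Hilb_{S/J}(t)=\Hilb_{S/\ini(J)}(t)$. Consequently $\dim(S/J)=\dim(S/\ini(J))=:d$, and the numerator Laurent polynomials of the two Hilbert series coincide: $P_{S/J}(t)=P_{S/\ini(J)}(t)$.

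Next I would upgrade the hypothesis ``$\ini(J)$ is Cohen--Macaulay'' to ``$S/J$ is Cohen--Macaulay.'' This rests on the well-known comparison of graded Betti numbers under Gröbner degeneration, namely $\beta_{i,j}(S/J)\le\beta_{i,j}(S/\ini(J))$ for all $i,j$ (which can be obtained by iterated mapping cones), so that $\pd(S/J)\le\pd(S/\ini(J))$; by the Auslander--Buchsbaum formula this gives $\depth(S/J)\ge\depth(S/\ini(J))=d$. Together with $\dim(S/J)=d$ from the previous paragraph, $S/J$ is Cohen--Macaulay of dimension $d$.

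Finally, since both $S/J$ and $S/\ini(J)$ are graded Cohen--Macaulay of dimension $d$, the preceding lemma applies to each of them, yielding $\reg(S/J)=\deg\big(P_{S/J}(t)\big)$ and $\reg(S/\ini(J))=\deg\big(P_{S/\ini(J)}(t)\big)$; as the two numerator polynomials agree, so do the regularities, which is the assertion. I expect the main point requiring care to be the Betti-number comparison $\beta_{i,j}(S/J)\le\beta_{i,j}(S/\ini(J))$ (equivalently, the semicontinuity $\depth(S/J)\ge\depth(S/\ini(J))$ under Gröbner degeneration): it is standard but not trivial, and could instead be cited directly together with the standard inequality $\reg(S/J)\le\reg(S/\ini(J))$.
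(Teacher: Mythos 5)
Your proposal is correct and follows essentially the same route as the paper: Hilbert series are preserved under passing to the initial ideal, Cohen--Macaulayness descends from $S/\ini(J)$ to $S/J$, and then the preceding lemma identifies $\reg$ with the degree of the common numerator polynomial. The only difference is cosmetic: the paper simply cites Herzog--Hibi (Corollaries 6.1.5 and 3.3.5) for the two ingredients you prove by hand via the flat degeneration and the Betti-number comparison.
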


\begin{proof}
    It is well-known that $\Hilb_{S/J}(t)$ coincides with $\Hilb_{S/\ini(J)}(t)$; see \cite[Corollary 6.1.5]{MR2724673}.  Furthermore, since $\ini(J)$ is Cohen--Macaulay, so is $J$ of the same dimension by \cite[Theorem 3.3.4 and Corollary 3.3.5]{MR2724673}.  Therefore, 
    \[
        \reg(S/\ini(J))=\deg(P_{S/\ini(J)}(t))=\deg(P_{S/J}(t))=\reg(S/J). \qedhere
    \]
\end{proof}

Let $I$ be an ideal.  A subideal $J\subseteq I$ is called a \emph{reduction} of $I$ if there is a number $n$ such that $I^{n+1}=J\cdot I^{n}$. The least number $n$ with the above property is the \emph{reduction number} of $I$ with respect to $J$ and denoted by $r_J(I)$. A reduction $J$ is \emph{minimal} if no proper subideal of $J$ is a reduction of $I$. The \emph{\textup{(}absolute\textup{)} reduction number} of $I$ is defined as
\[
    \r(I)\coloneqq \min\Set{\r_J(I)| \text{$J$ is a minimal reduction of $I$}}.
\]

\begin{Lemma}
    [{\cite[Proposition 6.6]{CNPY}}]
    \label{CNPY:6.6}
    Let $I \subset S$ be a homogeneous ideal that is generated in one degree, say {$\delta$}. Assume that the special fiber ring $\calF(I)$ is Cohen-Macaulay. Then each minimal reduction of $I$ is generated by $\dim(\calF(I))$ homogeneous polynomials of degree {$\delta$}, and $I$ has reduction number $\r(I) = \reg(\calF(I))$.
\end{Lemma}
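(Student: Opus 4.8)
The plan is to transport everything to the fiber cone $A:=\calF(I)$, which is a \emph{standard graded} $\KK$-algebra: writing $I=(f_1,\dots,f_m)$ with $\deg f_i=d$, the classes of the $f_i$ in $I/\frakm I$ lie in degree $1$ and generate $A$, so $A$ is a quotient of a polynomial ring by its presentation ideal. Write $\frakM_A$ for the graded maximal ideal of $A$ and set $\delta:=\dim A$; recall that $\delta$ is the analytic spread of $I$, so by the Northcott--Rees theorem \cite[Chapter 8]{MR2266432} (valid since $\Char\KK=0$, hence $\KK$ is infinite) every minimal reduction of $I$ has exactly $\delta$ generators. Fix a linear Noether normalization $\KK[\ell_1,\dots,\ell_\delta]\hookrightarrow A$ with $\ell_i\in A_1$ general $\KK$-linear combinations of the classes of the $f_j$, and lift the combinations to degree-$d$ forms $g_1,\dots,g_\delta\in I$.

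First I would set up the dictionary between reductions of $I$ that are contained in $I$ and generated in degree $d$, and reductions of $\frakM_A$ generated in degree $1$. The crucial input is the identity $A_n=I^n/\frakm I^n$ together with the equivalence, for $J=(g_1,\dots,g_s)\subseteq I$ with $\deg g_i=d$ and images $\bar g_i\in A_1$,
\[
    I^{n+1}=JI^n \iff A_{n+1}=(\bar g_1,\dots,\bar g_s)A_n ,
\]
whose forward implication is immediate after tensoring with $R/\frakm$ and whose reverse implication follows from graded Nakayama applied to the finitely generated graded module $I^{n+1}/JI^n$. Hence $J$ is a reduction of $I$ iff $(\bar g_\bullet)$ is a reduction of $\frakM_A$, with $\r_J(I)=\r_{(\bar g_\bullet)}(\frakM_A)$. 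Since $I$ is equigenerated, every minimal reduction of $I$ may be taken homogeneous, hence generated in degree $d$; combined with the generator count above, each minimal reduction of $I$ is generated by $\delta=\dim\calF(I)$ forms of degree $d$ whose images form a homogeneous system of parameters of linear forms on $A$, and $J=(g_1,\dots,g_\delta)$ is one such. This proves the first assertion of the lemma.

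For the reduction number I would compute $\r_{(\bar g_\bullet)}(\frakM_A)$ for a linear system of parameters $\bar g_1,\dots,\bar g_\delta$ of $A$. Since $A$ is Cohen--Macaulay this is an $A$-regular sequence, so $\ov A:=A/(\bar g_1,\dots,\bar g_\delta)$ is a standard graded Artinian $\KK$-algebra with $\Hilb_A(t)=\Hilb_{\ov A}(t)/(1-t)^\delta$, whence $P_A(t)=\Hilb_{\ov A}(t)$ and $\deg P_A(t)=\max\Set{n:\ov A_n\ne 0}$. On the other hand, for the standard graded algebra $A$ one has $\frakM_A^{\,k}=A_{\ge k}$ and $(\bar g_\bullet)_{\ge n+1}=(\bar g_\bullet)\frakM_A^{\,n}$, so $A_{n+1}=(\bar g_\bullet)A_n$ is equivalent to $\frakM_A^{\,n+1}\subseteq(\bar g_\bullet)$, i.e.\ to $\ov A_{n+1}=0$, which for the standard graded Artinian algebra $\ov A$ forces $\ov A_m=0$ for all $m\ge n+1$. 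Therefore
\[
    \r_J(I)=\r_{(\bar g_\bullet)}(\frakM_A)=\max\Set{n:\ov A_n\ne 0}=\deg P_A(t)=\reg(A),
\]
the last equality being \cite[Proposition 7.43]{MR2508056}, applicable since $A$ is Cohen--Macaulay. Running the same computation for an arbitrary minimal reduction $J'$ of $I$ — whose generators' images again form a linear system of parameters, hence an $A$-regular sequence by Cohen--Macaulayness — gives $\r_{J'}(I)=\reg(\calF(I))$ as well; taking the minimum over all minimal reductions yields $\r(I)=\reg(\calF(I))$.

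I expect the dictionary step to be the main obstacle: making the correspondence between reductions of $I$ in $R$ and reductions of $\frakM_A$ in the fiber cone fully precise with matching reduction numbers, and in particular verifying that \emph{every} minimal reduction of $I$ is captured (homogeneity plus equigeneration, then Northcott--Rees over the infinite field $\KK$). The remaining numerical part is short, being essentially the observation that killing a linear regular system of parameters on a graded Cohen--Macaulay algebra produces an Artinian algebra whose top nonzero degree is simultaneously the relevant reduction number (via the $\frakM_A^{\,k}=A_{\ge k}$ bookkeeping) and the degree of the Hilbert numerator, hence, by the quoted lemma, the Castelnuovo--Mumford regularity.
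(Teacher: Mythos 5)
First, a caveat: the paper does not prove this lemma at all --- it quotes it from \cite[Proposition 6.6]{CNPY} --- so there is no internal proof to compare with, and your argument has to stand on its own. Its second half does: for a subideal $J\subseteq I$ generated by $\dim\calF(I)$ forms of degree $d$ whose images in $A_1$ form a system of parameters, your dictionary $I^{n+1}=JI^n\Leftrightarrow A_{n+1}=(\bar g_\bullet)A_n$ (Nakayama in one direction, reduction mod $\frakm$ in the other), the identification of $\r_J(I)$ with the top nonvanishing degree of $A/(\bar g_\bullet)$, and the equality of that degree with $\deg P_A(t)=\reg(A)$ via Cohen--Macaulayness are all correct. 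The gap is in the first half, exactly where you flag it yourself: the sentence ``since $I$ is equigenerated, every minimal reduction of $I$ may be taken homogeneous, hence generated in degree $d$'' is not an argument but a restatement of the first assertion of the lemma. Reductions and minimal reductions are defined for arbitrary subideals $J\subseteq I$ (and it is this generality the paper exploits later in \Cref{rmk:11}), so a priori a minimal reduction need be neither homogeneous nor equigenerated; Northcott--Rees, which you invoke, is a local theorem that controls only the \emph{number} of generators, says nothing about degrees, and even the transfer of ``minimal reduction'' between the graded and local settings needs justification. Since your computation $\r_J(I)=\reg(\calF(I))$ is established only for degree-$d$-generated reductions, you cannot yet take the minimum defining $\r(I)$: for all the proposal shows, there could be a non-homogeneous minimal reduction with strictly smaller reduction number.

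The gap is fillable, but filling it is the real content of the cited result. A first step your write-up does not take: because $J\subseteq I$ and $I_{<d}=0$, every element of $J$ has vanishing homogeneous components in degrees $<d$, and if $a_1,\dots,a_m$ generate $J$ then the degree-$d$ component of any element of $J$ lies in the $\KK$-span of $(a_1)_d,\dots,(a_m)_d$; comparing degree-$d(n+1)$ pieces in $I^{n+1}=JI^n$ then shows that the homogeneous ideal $J'=\bigl((a_1)_d,\dots,(a_m)_d\bigr)$ is again a reduction of $I$, generated in degree $d$ by at most $\mu(J)$ forms. The genuinely delicate point is to conclude that a \emph{minimal} reduction $J$ actually coincides with such a degree-$d$-generated ideal: $J'$ need not be contained in $J$, so minimality of $J$ does not apply directly, and a further argument (this is what the proof in \cite{CNPY} supplies) is required. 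Until that identification is made, the first assertion of the lemma --- and with it the passage from $\r_J(I)=\reg(\calF(I))$ for special $J$ to $\r(I)=\reg(\calF(I))$ --- remains unproved in your write-up.
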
 

Let $\calD$ be a nonempty finite subset of $\ZZ_+^3$ such that for the $2$-minors ideal $\bfI_2(\calD)$,  all its minimal Gr\"obner basis elements with respect to the lexicographic order are quadratic (this requirement is satisfied when $\calD$ is a three-dimensional Ferrers diagram that satisfies the projection property, by \cite[Corollary 2.14]{arXiv:1709.03251}). In particular, $\ini(\bfI_2(\calD))$ is squarefree by the well-known Buchberger's criterion and the explicit description of the generating set of $\bfI_2(\calD)$ in \Cref{2-minors}. Whence, we will write $\Delta(\calD)$ for the associated Stanley--Reisner complex.

Once we have a simplicial complex $\Delta$, we will consider the regularity 
\[
    \reg(\Delta)\coloneqq \reg(\KK[\Delta])=\reg(S/I_{\Delta}) 
\]
for the Stanley--Reisner ideal $I_{\Delta}$ of $\Delta$ in appropriate polynomial ring $S$ over the field $\KK$.  We will similarly consider the multiplicity 
\[
    \e(\Delta)\coloneqq \e(\KK[\Delta])=\e(S/I_{\Delta}).
\]

\begin{Corollary}
    \label{Cor:Fib-SR}
    Let $\calD$ be a three-dimensional Ferrers diagram that satisfies the projection property. Then,
    \begin{equation}
        \reg(\Delta(\calD))=\reg(\KK[\bfT_{\calD}]/J_{\calD})=\reg(\calF(I_{\calD}))=\r(I_{\calD})  
        \label{reg=rednum}
    \end{equation}
    and
    \begin{equation}
        \e(\Delta(\calD))=\e(\KK[\bfT_{\calD}]/J_{\calD})=\e(\calF(I_{\calD})).  
        \label{e-equality}
    \end{equation}
\end{Corollary}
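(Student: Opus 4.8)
The plan is to chain together the ingredients already assembled in this section, with the vertex-decomposability of $\Delta(\calD)$ doing all the real work.

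First I would record the structural facts coming from \cite[Corollary 2.14 and Theorem 6.1]{arXiv:1709.03251}: since $\calD$ is a three-dimensional Ferrers diagram satisfying the projection property, one has $J_{\calD}=\bfI_2(\calD)$, the initial ideal $\ini(J_{\calD})$ taken with respect to the lexicographic order is squarefree with associated Stanley--Reisner complex $\Delta(\calD)$, and $\Delta(\calD)$ is pure vertex-decomposable. In particular $\KK[\Delta(\calD)]=\KK[\bfT_{\calD}]/\ini(J_{\calD})$ is Cohen--Macaulay, whence $J_{\calD}$ is Cohen--Macaulay by \cite[Corollary 3.3.5]{MR2724673}, and therefore the special fiber ring $\calF(I_{\calD})$, being canonically isomorphic to $\KK[\bfT_{\calD}]/J_{\calD}$ as noted in Section 2, is Cohen--Macaulay as well.

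For the regularity, since $\ini(J_{\calD})$ is Cohen--Macaulay, \Cref{reg-ini-reg} gives $\reg(\KK[\bfT_{\calD}]/J_{\calD})=\reg(\KK[\bfT_{\calD}]/\ini(J_{\calD}))=\reg(\KK[\Delta(\calD)])=\reg(\Delta(\calD))$, which is the first asserted equality; the second, $\reg(\KK[\bfT_{\calD}]/J_{\calD})=\reg(\calF(I_{\calD}))$, is just the isomorphism above. For the last equality, observe that $I_{\calD}$ is generated in the single degree $3$ by the monomials $x_iy_jz_k$, so that \Cref{CNPY:6.6} applies to the Cohen--Macaulay fiber ring $\calF(I_{\calD})$ and yields $\r(I_{\calD})=\reg(\calF(I_{\calD}))$.

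For the multiplicity, recall that the Hilbert series of $\KK[\bfT_{\calD}]/J_{\calD}$ coincides with that of $\KK[\bfT_{\calD}]/\ini(J_{\calD})=\KK[\Delta(\calD)]$ by \cite[Corollary 6.1.5]{MR2724673}, and since $\e(M)=P_M(1)$ is extracted from the numerator of the Hilbert series, this forces $\e(\KK[\bfT_{\calD}]/J_{\calD})=\e(\KK[\Delta(\calD)])=\e(\Delta(\calD))$; the remaining equality $\e(\calF(I_{\calD}))=\e(\KK[\bfT_{\calD}]/J_{\calD})$ is once more the isomorphism of Section 2. I do not expect a genuine obstacle here: the content is essentially bookkeeping of cited results, and the only points deserving care are the hypotheses of \Cref{CNPY:6.6} --- equigeneration of $I_{\calD}$ and Cohen--Macaulayness of $\calF(I_{\calD})$ --- both of which are already in hand.
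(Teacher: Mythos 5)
Your proposal is correct and follows essentially the same route as the paper's proof: \Cref{reg-ini-reg} for the first equality in \eqref{reg=rednum}, the canonical isomorphism $\calF(I_{\calD})\cong \KK[\bfT_{\calD}]/J_{\calD}$ for the second, \Cref{CNPY:6.6} (with the Cohen--Macaulay hypothesis supplied by the vertex-decomposability of $\Delta(\calD)$ from the earlier work) for the reduction number, and the coincidence of the Hilbert series of $\KK[\bfT_{\calD}]/J_{\calD}$ and $\KK[\bfT_{\calD}]/\ini(J_{\calD})$ for \eqref{e-equality}. The only difference is that you spell out the Cohen--Macaulayness verification explicitly, which the paper leaves implicit.
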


\begin{proof}
    With respect to \eqref{reg=rednum}, the first equality follows from \Cref{reg-ini-reg}, and the last equality is by \Cref{CNPY:6.6}. With respect to \eqref{e-equality}, the first equality follows from the fact that the Hilbert series of $\KK[\bfT_{\calD}]/J_{\calD}$ and $\KK[\bfT_{\calD}]/\ini(J_{\calD})$ coincide, as already mentioned in the proof of \Cref{reg-ini-reg}.
\end{proof}

\begin{Remark}
    It is recently proved in \cite[Corollary 2.7]{arXiv1805.11923} that if $I$ is a homogeneous ideal of $S$ with arbitrary term order such that the initial ideal $\ini(I)$ is squarefree, then $\depth(S/I)=\depth(S/\ini(I))$  and $\reg(S/I)=\reg(S/\ini(I))$. In particular, the first equality of \eqref{reg=rednum} also follows.
\end{Remark}

In \cite[Theorem 4.1]{arXiv:1709.03251}, we have shown that if the three-dimensional Ferrers diagram satisfies the projection property, then the associated Stanley--Reisner complex is pure vertex-decomposable. To investigate the regularity and the multiplicity of the corresponding Stanley--Reisner ring, we will fall back on the following critical observation.

\begin{Remark}
    [{See also \cite[Remark 2.4]{MR2426505}}]
    \label{rmk:alg}
    Let $\Delta$ be a pure vertex-decomposable simplicial complex on the finite set $[n]$ and assume that $n$ is a shedding vertex.  Let $I_{\Delta}$ be the Stanley--Reisner ideal of $\Delta$ considered as a complex on $[n]$ in $S=\KK[x_1,\dots,x_n]$. Then the cone over $\link_{\Delta}(n)$ with apex $n$ considered as a complex on $[n]$ has Stanley--Reisner ideal $J_{\link_{\Delta}(n)}=I_{\Delta}:x_n$.  And the Stanley--Reisner ideal of ${\Delta\setminus n}$ considered as a complex on $[n]$ is $(x_n,I_{\Delta\setminus n})$ where $I_{\Delta\setminus n}\subset \KK[x_1,\dots,x_{n-1}]$ is the Stanley--Reisner ideal of $\Delta\setminus n$ considered as a complex on $[n-1]$.  Furthermore, we have a short exact sequence of graded $S$-modules of the same positive dimension: 
    \[
        0\to S/J_{\link_{\Delta}(n)}(-1)\to S/I_{\Delta}\to S/I_{\Delta\setminus n}S\to 0.
    \]
    As multiplicity is additive on such a sequence, we have
    \begin{equation}
        \e(\Delta)=\e(\Delta\setminus n)+\e(\link_{\Delta}(n)). \label{e-seq}
    \end{equation}
    Meanwhile, by \cite[Theorem 4.2]{arXiv:1301.6779}, we have
    \begin{equation}
        \reg(\Delta)=\max\Set{\reg(\Delta\setminus n),\reg(\link_{\Delta}(n))+1}. \label{reg-seq}
    \end{equation}
\end{Remark}

\section{Full rectangular case}
In this section, we will focus on the special case when $\calD=[a_{\calD}]\times [b_{\calD}]\times[c_{\calD}]$ is a full three-dimensional Ferrers diagram.
In this situation, the most convenient tool will be the Segre product of graded modules. Say, that $R=\KK[x_1,\dots,x_m]$ and $S=\KK[y_1,\dots,y_n]$ are two standard graded polynomial rings over $\KK$. Then the \emph{Segre product} of $R$ and $S$ is $R\uotimes S=\bigoplus_{\ell\in \ZZ}(R_{\ell}\otimes_{\KK}S_{\ell})$, which is a graded ring. For a graded $R$-module $M$ and a graded $S$-module $N$, the \emph{Segre product} of $M$ and $N$ is defined as $M\uotimes N= \bigoplus_{\ell\in\ZZ}(M_{\ell}\otimes_{\KK}N_{\ell})$, which is a graded $(R\uotimes S)$-module.

Now, we study the special fiber ring in the full rectangular case. Firstly, we consider multiplicity.
\begin{Lemma}
    \label{lem:14}
    If $M$ and $N$ above are finitely generated and have positive dimensions, then
    \[
        \dim(M\uotimes N)=\dim(M)+\dim(N)-1
    \]
    and
    \[
        \e(M\uotimes N)=\binom{\dim(M)+\dim(N)-2}{\dim(M)-1} \e(M)\e(N).
    \]
\end{Lemma}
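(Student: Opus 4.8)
The plan is to reduce everything to Hilbert series, since both the dimension and the multiplicity are read off from the numerator polynomial of the Hilbert series and its value at $t=1$. First I would recall that for a graded $R$-module $M$ with Hilbert series $\Hilb_M(t) = P_M(t)/(1-t)^{\dim M}$ and $P_M(1)=\e(M)>0$. The key algebraic fact I would invoke is the formula for the Hilbert series of a Segre product: for graded modules $M$ over $R = \KK[x_0,\dots,x_m]$ and $N$ over $S = \KK[y_0,\dots,y_n]$, one has $H(M\uotimes N, \ell) = H(M,\ell)\cdot H(N,\ell)$ by definition, so $\Hilb_{M\uotimes N}(t)$ is the \emph{Hadamard product} (coefficientwise product) of $\Hilb_M(t)$ and $\Hilb_N(t)$. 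I would then use the standard integral/contour representation of the Hadamard product of two rational functions, or equivalently the classical identity that the Hadamard product of $1/(1-t)^a$ and $1/(1-t)^b$ has the form $Q(t)/(1-t)^{a+b-1}$ where $Q(1) = \binom{a+b-2}{a-1}$ — this last statement can be checked directly, since $H(1/(1-t)^a,\ell)\cdot H(1/(1-t)^b,\ell) = \binom{\ell+a-1}{a-1}\binom{\ell+b-1}{b-1}$ is a polynomial in $\ell$ of degree $(a-1)+(b-1) = a+b-2$, hence the series is $Q(t)/(1-t)^{a+b-1}$ with $Q(1)$ equal to $(a+b-2)!$ times the leading coefficient of that polynomial, namely $(a+b-2)!/((a-1)!(b-1)!) = \binom{a+b-2}{a-1}$.

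The main step is then bilinearity. Write $\Hilb_M(t) = P_M(t)/(1-t)^{d}$ with $d = \dim M$, and $\Hilb_N(t) = P_N(t)/(1-t)^{e}$ with $e = \dim N$. Expanding $P_M(t) = \sum_i a_i t^i$ and $P_N(t) = \sum_j b_j t^j$, the Hadamard product is bilinear, so $\Hilb_{M\uotimes N}(t)$ is a $\ZZ$-linear combination of Hadamard products of $t^i/(1-t)^d$ with $t^j/(1-t)^e$; shifting indices, each such term is $t^{\max(i,j)}$ times the Hadamard product of $1/(1-t)^d$ and $1/(1-t)^e$ (one has to be slightly careful with the shift, but the upshot is that every term contributes to a denominator $(1-t)^{d+e-1}$). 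Therefore $\Hilb_{M\uotimes N}(t) = P(t)/(1-t)^{d+e-1}$ for some Laurent polynomial $P$, which already gives $\dim(M\uotimes N) = d+e-1$ once we check $P(1)\neq 0$. To compute $P(1)$: evaluating the bilinear combination, the contribution of the cross term $a_i b_j$ is $a_i b_j \cdot \binom{d+e-2}{d-1}$ (the $t^{\max(i,j)}$ prefactor evaluates to $1$ at $t=1$), and summing over $i,j$ gives $P(1) = \binom{d+e-2}{d-1} \cdot \left(\sum_i a_i\right)\left(\sum_j b_j\right) = \binom{d+e-2}{d-1} P_M(1) P_N(1) = \binom{d+e-2}{d-1}\e(M)\e(N)$. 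Since $\e(M), \e(N) > 0$ and the binomial coefficient is positive, $P(1) > 0$, confirming both the dimension formula and that $\e(M\uotimes N) = \binom{d+e-2}{d-1}\e(M)\e(N)$ as claimed.

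The main obstacle is handling the index shifts in the Hadamard product cleanly when $P_M$ and $P_N$ are genuine Laurent polynomials (possibly with negative-degree terms and with gaps): one must verify that the Hadamard product of $t^i/(1-t)^d$ and $t^j/(1-t)^e$ is exactly $t^{\max(i,j)} \cdot \bigl(\text{Hadamard product of } 1/(1-t)^d \text{ and } 1/(1-t)^e\bigr)$ — equivalently that $\binom{\ell-i+d-1}{d-1}\binom{\ell-j+e-1}{e-1}$, as a function of $\ell$, generates the series $t^{\max(i,j)} Q(t)/(1-t)^{d+e-1}$. This is a routine but slightly fiddly generating-function bookkeeping; alternatively one can sidestep it entirely by invoking a known reference for the Hilbert series of Segre products (e.g. the description via the diagonal of a bivariate series, as in the treatment of Segre products in Goto--Watanabe or in \cite{MR2724673}), and then only the $t=1$ evaluation needs to be done by hand via the leading-coefficient-of-the-Hilbert-polynomial argument above. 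I would present the proof in the latter form for brevity: cite the structural statement that the Hilbert series of a Segre product is the Hadamard product, deduce the denominator exponent, and finish with the explicit computation of $P(1)$ via the product of Hilbert polynomials.
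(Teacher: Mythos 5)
Your argument is correct, but it reaches the result by a more roundabout route than necessary, and in fact its engine is the same as the paper's. The paper simply observes that $H(M\uotimes N,\ell)=H(M,\ell)H(N,\ell)$ and that for $\ell\gg 0$ each factor agrees with its Hilbert polynomial, whose leading term encodes dimension and multiplicity; multiplying the two leading terms $\frac{\e(M)}{(\dim M-1)!}\ell^{\dim M-1}$ and $\frac{\e(N)}{(\dim N-1)!}\ell^{\dim N-1}$ immediately gives a polynomial of degree $\dim M+\dim N-2$ with leading coefficient $\frac{\e(M)\e(N)}{(\dim M-1)!(\dim N-1)!}$, hence both formulas in one line. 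You instead pass to Hilbert series, view the Segre product as a Hadamard product of rational functions, decompose the numerators $P_M,P_N$ bilinearly, and control each cross term; but your justification of the key cross-term computation is itself the leading-coefficient-of-the-Hilbert-polynomial argument, so you end up proving the same estimate with extra bookkeeping. One caution: the exact identity you propose to verify --- that the Hadamard product of $t^i/(1-t)^d$ and $t^j/(1-t)^e$ equals $t^{\max(i,j)}$ times the Hadamard product of $1/(1-t)^d$ and $1/(1-t)^e$ --- is false for $i\neq j$ (the coefficient at $t^\ell$ is $\binom{\ell-i+d-1}{d-1}\binom{\ell-j+e-1}{e-1}$, not the shifted-by-$\max(i,j)$ product). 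Fortunately your proof never needs that exact identity: since the coefficient function is eventually a polynomial in $\ell$ of degree $d+e-2$ with leading coefficient $\frac{1}{(d-1)!(e-1)!}$, each cross term has denominator $(1-t)^{d+e-1}$ and numerator evaluating to $\binom{d+e-2}{d-1}$ at $t=1$, which is all you use; and your check that $P(1)=\binom{d+e-2}{d-1}\e(M)\e(N)>0$ correctly rules out cancellation in the denominator exponent. So the proof stands, but you could shorten it to the paper's direct Hilbert-function product and skip the Hadamard formalism entirely.
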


\begin{proof}
    By definition, the Hilbert functions satisfy
    \begin{align*}
        H(M,t)&=\frac{\e(M)}{(\dim(M)-1)!} t^{\dim(M)-1}+\text{lower degrees},\\
        \intertext{and}
        H(N,t)&=\frac{\e(N)}{(\dim(N)-1)!} t^{\dim(N)-1}+\text{lower degrees}
    \end{align*}
    for $t\gg 0$.  Thus,
    \begin{align*}
        H(M\uotimes N,t)&=H(M,t)H(N,t)\\
        &= \frac{\e(M)}{(\dim(M)-1)!} \frac{\e(N)}{(\dim(N)-1)!} t^{\dim(M)+\dim(N)-2}+\text{lower degrees}
    \end{align*}
    for $t\gg 0$. The expected dimension and the multiplicity formula can be read off from the last equation. 
\end{proof}

\begin{Proposition}
    \label{full-e}
    Suppose that $\calD$ is the full three-dimensional Ferrers diagram $[a_{\calD}]\times [b_{\calD}]\times[c_{\calD}]$. Then the multiplicity of the special fiber ring is given by the trinomial:
    \[
        \e(\calF(I_{\calD}))= \binom{a_{\calD}+b_{\calD}+c_{\calD}-3}{a_{\calD}-1,b_{\calD}-1,c_{\calD}-1} := \frac{(a_{\calD}+b_{\calD}+c_{\calD}-3)!}{(a_{\calD}-1)!(b_{\calD}-1)!(c_{\calD}-1)!}.
    \]
\end{Proposition}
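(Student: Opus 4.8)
The plan is to recognise $\calF(I_{\calD})$ as a threefold Segre product of polynomial rings and then apply \Cref{lem:14} twice.

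\emph{Step 1: identifying the toric ring as a Segre product.} Since $\calD=[a_{\calD}]\times[b_{\calD}]\times[c_{\calD}]$, the toric ring $\KK[I_{\calD}]\cong\calF(I_{\calD})$ is the subalgebra of $R$ generated by all monomials $x_iy_jz_k$ with $i\in[a_{\calD}]$, $j\in[b_{\calD}]$, $k\in[c_{\calD}]$. Put $A=\KK[x_1,\dots,x_{a_{\calD}}]$, $B=\KK[y_1,\dots,y_{b_{\calD}}]$ and $C=\KK[z_1,\dots,z_{c_{\calD}}]$, so that $R\cong A\otimes_{\KK}B\otimes_{\KK}C$. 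I would show that $\calF(I_{\calD})\cong A\uotimes B\uotimes C$ as graded $\KK$-algebras, the grading on the left coming from the powers of $I_{\calD}$ and the one on the right being the Segre grading. The point is that the degree-$\ell$ component of $\KK[I_{\calD}]$ is spanned by the products $\prod_{s=1}^{\ell}(x_{i_s}y_{j_s}z_{k_s})$, i.e.\ by the monomials $fgh$ with $f$, $g$, $h$ degree-$\ell$ monomials in the $x$'s, the $y$'s and the $z$'s respectively; this is exactly the image of $A_{\ell}\otimes_{\KK}B_{\ell}\otimes_{\KK}C_{\ell}$ under the map $f\otimes g\otimes h\mapsto fgh$, which is injective because the three blocks of variables are algebraically independent.

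\emph{Step 2: iterating \Cref{lem:14}.} By associativity of the Segre product, $A\uotimes B\uotimes C=(A\uotimes B)\uotimes C=\bigoplus_{\ell}\left(A_{\ell}\otimes_{\KK}B_{\ell}\otimes_{\KK}C_{\ell}\right)$. Since $\dim A=a_{\calD}$, $\dim B=b_{\calD}$, $\dim C=c_{\calD}$ and $\e(A)=\e(B)=\e(C)=1$, \Cref{lem:14} gives
\[
    \dim(A\uotimes B)=a_{\calD}+b_{\calD}-1,\qquad \e(A\uotimes B)=\binom{a_{\calD}+b_{\calD}-2}{a_{\calD}-1}.
\]
Applying \Cref{lem:14} again to $(A\uotimes B)\uotimes C$ yields $\dim\calF(I_{\calD})=a_{\calD}+b_{\calD}+c_{\calD}-2$ together with
\[
    \e(\calF(I_{\calD}))=\binom{a_{\calD}+b_{\calD}+c_{\calD}-3}{a_{\calD}+b_{\calD}-2}\binom{a_{\calD}+b_{\calD}-2}{a_{\calD}-1}.
\]
Expanding the two binomial coefficients into factorials, the factor $(a_{\calD}+b_{\calD}-2)!$ cancels and one is left with $\dfrac{(a_{\calD}+b_{\calD}+c_{\calD}-3)!}{(a_{\calD}-1)!\,(b_{\calD}-1)!\,(c_{\calD}-1)!}$, which is the asserted trinomial coefficient.

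There is no serious obstacle here: one only has to check that the hypotheses of \Cref{lem:14} are met, namely that all dimensions in sight are positive — this holds since $a_{\calD},b_{\calD},c_{\calD}\ge 1$, hence also $a_{\calD}+b_{\calD}-1\ge 1$ — and that the $I_{\calD}$-adic grading is matched with the Segre grading under the isomorphism of Step 1. Degenerate cases where some of $a_{\calD}$, $b_{\calD}$, $c_{\calD}$ equal $1$ require no separate argument: e.g.\ if $a_{\calD}=1$ then $A\uotimes B\cong B$ and all of the above formulas remain correct.
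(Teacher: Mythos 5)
Your proposal is correct and follows essentially the same route as the paper: identify $\calF(I_{\calD})$ with the Segre product of the three polynomial rings and apply \Cref{lem:14} twice (the paper merely associates as $A\uotimes(B\uotimes C)$ instead of $(A\uotimes B)\uotimes C$, which is immaterial). Your extra verification that the $I_{\calD}$-adic grading matches the Segre grading is a welcome detail the paper leaves implicit.
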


\begin{proof}
    Notice that
    \begin{align*}
        \calF(I_{\calD}) & \cong \KK[x_1,\dots,x_{a_{\calD}}] \uotimes(\KK[y_1,\dots,y_{b_{\calD}}] \uotimes \KK[z_{1},\dots,z_{c_{\calD}}]).
    \end{align*}
    Thus, by \Cref{lem:14},
    \begin{align*}
        \e(\calF(I_{\calD}))=\binom{a_{\calD}+b_{\calD}+c_{\calD}-3}{a_{\calD}-1}\binom{b_{\calD}+c_{\calD}-2}{b_{\calD}-1} 
        =\binom{a_{\calD}+b_{\calD}+c_{\calD}-3}{a_{\calD}-1,b_{\calD}-1,c_{\calD}-1},
    \end{align*}
    as
    \[
        \e(\KK[x_1,\dots,x_{a_{\calD}}])=\e(\KK[y_1,\cdots,y_{b_{\calD}}])=\e(\KK[z_1,\cdots,z_{c_{\calD}}])=1. \qedhere
    \]
\end{proof} 

Secondly, we consider the regularity. The following is known.

\begin{Lemma}
    [{\cite[Theorem 5.3]{zbMATH06454819}}]
    \label{MD:5.3}
    Let $S_1,\dots,S_s$ be graded polynomial rings on disjoint sets of variables over $\KK$. For $i=1,\dots,s$, let $M_i$ be a graded finitely generalized Cohen--Macaulay $S_i$-module of positive dimension. 
    \begin{enumerate}[a]
        \item If $\dim(M_i)=1$ for all $i$, then $M_1\uotimes\cdots \uotimes M_s$ is a Cohen--Macaulay $S_1\uotimes \cdots \uotimes S_s$-module, and 
            \[
                \reg(M_1\uotimes \cdots \uotimes M_s)=\max\Set{\reg(M_1),\dots,\reg(M_s)}.
            \]
        \item Assume that at least for one $j$, $\dim(M_j)\ge 2$, and for all $i=1,\dots,s$, $M_i$ is an $\NN$-graded $S_i$-module with $\reg(M_i)<\dim(M_i)$. Then, $M_1\uotimes \cdots \uotimes M_s$ is a Cohen--Macaulay $S_1\uotimes \cdots \uotimes S_s$-module, and
            \begin{align*}
                \reg(M_1\uotimes \cdots \uotimes M_s)& =(\dim(M_1)+\cdots+\dim(M_s)-s+1)\\
                & \qquad -\max\Set{\dim(M_i)-\reg(M_i):1\le i\le s}.  
            \end{align*}
    \end{enumerate}
\end{Lemma}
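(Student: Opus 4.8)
The plan is to pass to graded local cohomology and couple a Künneth-type formula for Segre products with a degree count. Write $A\coloneqq S_1\uotimes\cdots\uotimes S_s$ with graded maximal ideal $\mm$, and $P\coloneqq M_1\uotimes\cdots\uotimes M_s$. I would use the standard facts that $\dim(P)=\max\Set{k:H^k_{\mm}(P)\ne 0}$, that $P$ is Cohen--Macaulay if and only if $H^k_{\mm}(P)=0$ for all $k<\dim(P)$, and that $\reg(P)=\max\Set{k+\operatorname{end}(H^k_{\mm}(P)):k\ge 0}$, where $\operatorname{end}(-)$ is the largest twist in which a graded module is nonzero (and $-\infty$ for the zero module). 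Since the Segre product is associative, I would induct on $s$ and reduce to the case $s=2$: $P=M\uotimes N$, with $M$ over $S$ of dimension $d$ and regularity $r$, and $N$ over $T$ of dimension $e$ and regularity $r'$.

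The first step is the Goto--Watanabe/Künneth description of $H^\bullet_{\mm}(M\uotimes N)$. Using $\Proj(S\uotimes T)=\Proj(S)\times_{\KK}\Proj(T)$, the identification $\widetilde{M\uotimes N}(\ell)\cong\widetilde M(\ell)\boxtimes\widetilde N(\ell)$, the Künneth formula for coherent cohomology over $\KK$, and the exact sequences linking $H^0$ and the higher sheaf cohomology of $\widetilde M(\ell)$ on $\Proj(S)$ to the graded local cohomology of $M$, one obtains, for each $k$, an isomorphism of graded $\KK$-vector spaces exhibiting $H^k_{\mm}(M\uotimes N)$ as a finite direct sum of degreewise tensor products: ``mixed'' summands $H^a_{S_+}(M)\uotimes H^b_{T_+}(N)$ with $a+b=k+1$ and $a,b\ge 2$, together with two ``edge'' summands $\Gamma_*(\widetilde M)\uotimes H^k_{T_+}(N)$ and $H^k_{S_+}(M)\uotimes\Gamma_*(\widetilde N)$ (for $k\le 1$ one uses instead $0\to H^0_{\mm}(P)\to P\to\Gamma_*(\widetilde M)\uotimes\Gamma_*(\widetilde N)\to H^1_{\mm}(P)\to 0$), where $\Gamma_*(\widetilde M)$ agrees with $M$ up to the $H^0_{S_+}(M)$ and $H^1_{S_+}(M)$ parts.

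Granting this, the three assertions are read off by bookkeeping. The dimension $\dim(P)=d+e-1$ is already \Cref{lem:14}. For Cohen--Macaulayness one verifies that every summand of $H^k_{\mm}(P)$ with $k<d+e-1$ vanishes: generalized Cohen--Macaulayness makes each defect module $H^a_{S_+}(M)$, $a<d$, of finite length, hence supported in finitely many degrees, all $\ge 0$ since $M$ is $\NN$-graded; $\NN$-gradedness likewise bounds the least degree of $\Gamma_*(\widetilde M)$; and $r<d$ forces $\operatorname{end}(H^d_{S_+}(M))\le r-d<0$, with the symmetric statements for $N$ — so matching these ranges pairwise kills every relevant degreewise tensor. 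Once $P$ is Cohen--Macaulay, $\reg(P)=(d+e-1)+\operatorname{end}(H^{d+e-1}_{\mm}(P))$, and the top group $H^{d+e-1}_{\mm}(P)$ is, by the decomposition and the vanishing of $H^a_{S_+}(M)$ for $a>d$ and of $H^b_{T_+}(N)$ for $b>e$, governed by $H^d_{S_+}(M)\uotimes H^e_{T_+}(N)$ when $d,e\ge 2$, and by a single surviving edge term when $\min\Set{d,e}=1$; a careful computation of its top twist then gives $\operatorname{end}(H^{d+e-1}_{\mm}(P))=-\max\Set{d-r,\,e-r'}$, i.e. the stated formula. Iterating over the $s$ factors gives (b); case (a), where all $d_i=1$ so $\dim(P)=1$, is the degenerate one in which the mixed summands do not occur, $H^0_{\mm}(P)=0$ gives Cohen--Macaulayness, and only $\max_i\reg(M_i)$ survives.

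The genuinely delicate point I anticipate is precisely this last computation of the \emph{exact} top twist — not merely a bound — of the top local cohomology of $P$: it requires controlling $\operatorname{end}$ and the least degree of the saturation modules $\Gamma_*(\widetilde M)$, $\Gamma_*(\widetilde N)$, and checking that the top local cohomology of each factor is nonzero in its highest degree. This is exactly where the combined force of the hypotheses (finite length of the intermediate local cohomology, $\NN$-gradedness, and $\reg(M_i)<\dim(M_i)$) is spent, and it is the substance of \cite[Theorem 5.3]{zbMATH06454819}, which we cite rather than reprove.
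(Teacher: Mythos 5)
This statement is not proved in the paper at all: it is quoted verbatim from \cite[Theorem 5.3]{zbMATH06454819}, so there is no internal argument to compare yours against. Your sketch outlines a standard route (a Goto--Watanabe/K\"unneth-type decomposition of the graded local cohomology of a Segre product, reduction to $s=2$ by associativity, then degree bookkeeping), and as an explanation of why the cited result holds it is plausible; but as a \emph{proof} it is circular, because at the one point you yourself flag as delicate --- the exact top twist of $H^{d+e-1}_{\mm}(M\uotimes N)$ --- you fall back on citing the very theorem being proved. That is acceptable here only because the paper likewise treats \Cref{MD:5.3} as an imported result; if a self-contained argument were wanted, you would still owe that computation (or an alternative, more elementary route: the Hilbert series of $M\uotimes N$ is the Hadamard product of the factors' Hilbert series, and for Cohen--Macaulay modules the regularity is the degree of the numerator of the Hilbert series, the same fact the paper already invokes before \Cref{reg-ini-reg}; this reduces the regularity formula to an analysis of the Hadamard product once Cohen--Macaulayness of the Segre product is established).

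Two smaller points. The hypothesis is surely ``finitely generated Cohen--Macaulay'' (``generalized'' is a typo); under genuine Cohen--Macaulayness the intermediate modules $H^a_{S_+}(M)$ for $a<\dim M$ vanish outright, so your finite-length bookkeeping is unnecessary --- and, as written, not quite correct: for an $\NN$-graded module only $H^0_{S_+}(M)$ is forced to live in degrees $\ge 0$, while a finite-length $H^a_{S_+}(M)$ with $a\ge 1$ can be nonzero in negative degrees, so that step of your vanishing argument would fail if the hypothesis really were only generalized Cohen--Macaulayness. Finally, in the inductive reduction to $s=2$ for part (b) you should note that the conclusion reproduces the hypothesis, i.e.\ the formula gives $\reg(M\uotimes N)<\dim(M\uotimes N)$, so the induction can proceed; this is immediate from the formula but deserves a sentence.
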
 

\begin{Proposition}
    \label{full-reg}
    Suppose that $\calD$ is the full three-dimensional Ferrers diagram
    $[a_{\calD}]\times [b_{\calD}]\times[c_{\calD}]$. If 
    $a_{\calD}\le b_{\calD}\le c_{\calD}$, then
    \[
        \reg(\calF(I_{\calD}))=a_{\calD}+b_{\calD}-2=\r(I_{\calD}).
    \]
\end{Proposition}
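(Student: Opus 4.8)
The plan is to recognize $\calF(I_\calD)$ as a threefold Segre product of polynomial rings and then invoke \Cref{MD:5.3}. Throughout, abbreviate $a = a_\calD$, $b = b_\calD$, $c = c_\calD$. As recorded in the proof of \Cref{full-e}, there is a graded isomorphism
\[
    \calF(I_\calD) \cong \KK[x_1,\dots,x_a] \uotimes \KK[y_1,\dots,y_b] \uotimes \KK[z_1,\dots,z_c],
\]
the threefold product being unambiguous by associativity of $\uotimes$. Set $M_1 = \KK[x_1,\dots,x_a]$, $M_2 = \KK[y_1,\dots,y_b]$, $M_3 = \KK[z_1,\dots,z_c]$; each $M_i$ is a standard $\NN$-graded polynomial ring, hence a finitely generated (generalized) Cohen--Macaulay module over itself of positive dimension, with $\dim(M_1) = a$, $\dim(M_2) = b$, $\dim(M_3) = c$, and $\reg(M_i) = 0$ for $i = 1, 2, 3$.

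Next I would distinguish two cases. If $a = b = c = 1$, all three $M_i$ have dimension $1$, so part (a) of \Cref{MD:5.3} yields $\reg(\calF(I_\calD)) = \max\{0,0,0\} = 0 = a + b - 2$. Otherwise, since $a \le b \le c$, we have $c \ge 2$, so $\dim(M_3) \ge 2$; and $\reg(M_i) = 0 < \dim(M_i)$ holds for every $i$, as each dimension is a positive integer. Thus part (b) of \Cref{MD:5.3} applies with $s = 3$, giving
\[
    \reg(\calF(I_\calD)) = \bigl(a + b + c - 3 + 1\bigr) - \max\{a,\, b,\, c\} = (a + b + c - 2) - c = a + b - 2,
\]
where the hypothesis $a \le b \le c$ is precisely what guarantees $\max\{a,b,c\} = c$.

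I do not anticipate a genuine difficulty: essentially all the work is done by \Cref{MD:5.3}, and the remaining task is only the routine identification of the three polynomial-ring factors together with their dimensions and (vanishing) regularities, plus the trivial check of the numerical hypotheses of \Cref{MD:5.3}(b). The single point that needs attention is the case split, since \Cref{MD:5.3}(b) requires at least one factor of dimension $\ge 2$; the degenerate diagram with $a = b = c = 1$ therefore has to be treated via \Cref{MD:5.3}(a). One could instead iterate the two-factor version of \Cref{MD:5.3} --- first finding $\dim$ and $\reg$ of $\KK[y_1,\dots,y_b] \uotimes \KK[z_1,\dots,z_c]$, the homogeneous coordinate ring of $\PP^{b-1} \times \PP^{c-1}$, and then Segre-multiplying by $\KK[x_1,\dots,x_a]$ --- but the direct threefold application is cleaner and spares us from bookkeeping the intermediate regularity.
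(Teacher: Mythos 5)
Your argument is correct and follows essentially the same route as the paper: identify $\calF(I_{\calD})$ with the Segre product of the three polynomial rings and apply \Cref{MD:5.3}, the only cosmetic difference being that you apply the threefold version directly (with the trivial case $a_{\calD}=b_{\calD}=c_{\calD}=1$ handled by part (a)), whereas the paper iterates the two-factor construction and quotes \Cref{reg-2d} for the inner factor. The numerical verification $(a+b+c-2)-\max\{a,b,c\}=a+b-2$ under $a\le b\le c$ is exactly the computation the paper intends.
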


\begin{proof}
    The regularity formula follows from \Cref{MD:5.3} by proceeding as in the proof of \Cref{full-e}. 
    {The reduction number part then follows from \eqref{reg=rednum} in \Cref{Cor:Fib-SR}. 
        Alternatively, \cite[Theorem 5.2]{MR1600012} gives the reduction number.}
\end{proof}

\section{A uniform treatment in the strong projection case}
In this section, we want to provide reasonable estimates of the regularity and the multiplicity of the toric ring associated with the three-dimensional Ferrers diagram. The approach we take here is to consider simultaneously a pair of such diagrams under some conditions. It allows us to investigate these two invariants together.

Recall that for a given lattice point $\bdu\in \calD$, $\calA_{\bdu}(\calD)$ defined in \Cref{def:initial-part} is the subset obtained from $\calD$ by removing the points before $\bdu$ with respect to a given quasi-lexicographic order $\prec$. Meanwhile, we also write $\calA_{\bdu}^{+}(\calD)\coloneqq \calA_{\bdu}(\calD)\setminus \bdu$. 
For simplicity, when the diagram $\calD$ is clear from the context, we will write directly $\calA_{\bdu}$ and $\calA_{\bdu}^{+}$ respectively.

\begin{Definition}
    For the diagram $\calD$ above, we define
    \begin{align*}
        N(\calD)\coloneqq \Set{\bdu\in \calD^{1}: \ini(\bfI_2(\calA_{\bdu}))\supsetneq \ini(\bfI_2(\calA_{\bdu}^{+}))\KK[\bfT_{\calA_{\bdu}}]}
    \end{align*}
    and $\Phan(\calD)\coloneqq  \calD^{1}\setminus N(\calD)$ to be the set of \emph{normal points} and \emph{phantom points} {(}with respect to a chosen quasi-lexicographic order $\prec${)} respectively. Note that the initial ideals are with respect to the lexicographic monomial order on $\KK[\bfT_\calD]$.  
\end{Definition}

These notions were introduced in our previous paper \cite{arXiv:1709.03251}.
Pertinent properties regarding them are summarised in \Cref{Ob3.8Rm3.5} and \Cref{rmk:29}. 

\begin{Proposition}
    [{\cite[Corollary 2.14, Remark 3.5, and Observation 3.8]{arXiv:1709.03251}}]
    \label{Ob3.8Rm3.5}
    Let $\calD$ be a three-dimensional Ferrers diagram that satisfies the projection property.
    For any $\bdu\in \calD^{1}$, we have the following facts. 
    \begin{enumerate}[a]
        \item Let $\KK[\bfT_{\calA_{\bdu}}]$ be the subring of $\KK[\bfT_\calD]$ induced by the containment $\calA_\bdu \subseteq \calD$. Then the $2$-minors ideal $\bfI_2(\calA_{\bdu})$ is precisely $\bfI_2(\calD)\cap \KK[\bfT_{\calA_{\bdu}}]$, and the minimal monomial generating set $\gens(\ini(\bfI_2(\calA_{\bdu})))$ is precisely $\gens(\ini(\bfI_2(\calD)))\cap \KK[\bfT_{\calA_{\bdu}}]$.
            In particular, the restriction complex $\Delta(\calD,\calA_{\bdu})$ is $\Delta(\calA_{\bdu})$.

        \item If $\bdu$ is a phantom point, then trivially
            $\codim\bfI_2(\calA_{\bdu})=\codim\bfI_2(\calA_{\bdu}^{+})$. 
        \item If $\bdu$ is a normal point, then $\dim\Delta(\calA_{\bdu})=\dim\Delta(\calA_{\bdu}^{+})$ and $\codim\bfI_2(\calA_{\bdu})=\codim\bfI_2(\calA_{\bdu}^{+})+1$.
    \end{enumerate}
\end{Proposition}

\begin{Remark}
    \label{rmk:29}
    Let $\calD$ be a three-dimensional Ferrers diagram that satisfies the projection property.  Suppose that a quasi-lexicographic order $\prec$ induces a shedding order on $\Delta(\calD)$ (the induction order $\prec_\calD$ in \Cref{set:IO} satisfies this requirement by \cite[Theorem 4.1]{arXiv:1709.03251}). Now, take an arbitrary $\bdu\in \calD^1$. When $\bdu$ is a phantom point, we observed in \cite[Remark 3.6]{arXiv:1709.03251}
    that $\Delta(\calA_{\bdu})$ is a cone over $\Delta(\calA_{\bdu})\setminus
    T_{\bdu}=\Delta(\calA_{\bdu}^{+})$ with the apex $T_{\bdu}$.
    Trivially we have
    \begin{equation}
        \reg(\Delta(\calA_{\bdu}))= \reg(\Delta(\calA_{\bdu}^{+})) \quad  \text{ and } \quad
        \e(\Delta(\calA_{\bdu}))= \e(\Delta(\calA_{\bdu}^{+})) 
        \label{e-reg-phantom}
    \end{equation}
    in this case. On the other hand, if $\bdu$ is a normal point with respect to $\calD$, then by our assumption, $T_{\bdu}$ is a shedding vertex of $\Delta(\calA_{\bdu})$. Whence, it follows from \eqref{e-seq} and \eqref{reg-seq} in \Cref{rmk:alg} that 
    \begin{align}
        \reg(\Delta(\calA_{\bdu}))&=\max \Set{ \reg(\Delta(\calA_{\bdu}^{+})), \reg(\link_{\Delta(\calA_{\bdu})}(T_{\bdu}))+1}, \label{reg-ind}\\
        \e(\Delta(\calA_{\bdu}))&= \e(\Delta(\calA_{\bdu}^{+}))+ \e(\link_{\Delta(\calA_{\bdu})}(T_{\bdu})). \label{e-ind}
    \end{align}
    In particular, 
    \begin{equation}
        \reg(\Delta(\calA_{\bdu}))\ge \reg(\Delta(\calA_{\bdu}^{+}))
        \quad \text{ and } \quad
        \e(\Delta(\calA_{\bdu}))\ge \e(\Delta(\calA_{\bdu}^{+})). \label{e-reg-compare}
    \end{equation}
\end{Remark}

The synchronizing treatment applied in this section requires the introduction of a stronger condition. The necessity for introducing this condition is discussed in \Cref{rmk:counter-example}.

\begin{Definition}
    \label{def:strongPP}
    Let $\calD$ be a three-dimensional Ferrers diagram. Then $\calD$ is said to satisfy the \emph{strong projection property} if the following equivalent conditions hold for each $i$ in $[a_{\calD}-1]$:
    \begin{enumerate}[a]
        \item for each $\bdu\in\calD^i$, both $\calZ_1^{\ge i+1}(\calD,\bdu)$ and $\calZ_6^{\ge i+1}(\calD,\bdu)$ are empty;
        \item for each $\bdu\in\calD^i$, both $b_{\calD^{i+1}}\le \beta_{\calD}(\bdu)$ and $c_{\calD^{i+1}}\le \gamma_{\calD}(\bdu)$ hold;
        \item $(i,b_{\calD^{i+1}},c_{\calD^i})\in \calD$ and   $(i,b_{\calD^{i}},c_{\calD^{i+1}})\in \calD$.
    \end{enumerate}
\end{Definition}

\begin{Example}
    Consider the diagram $\calD$ in \Cref{MinExam}. Since $(2,b_{\calD^{2}},c_{\calD^{3}})=(2,3,2)\not\in \calD$, this diagram does not satisfy the strong projection property. The minimal
    three-dimensional Ferrers diagram that contains $\calD$ and satisfies the strong projection property, is the diagram $\calD'$, consisting of the following lattice points 
    \begin{align*}
        (1,1,1),
        (1,1,2), 
        (1,1,3), 
        (1,2,1), 
        (1,2,2), 
        (1,2,3), 
        (1,3,1), 
        (1,3,2),
        (1,3,3),\\ 
        (2,1,1), 
        (2,1,2), 
        (2,1,3), 
        (2,2,1),
        (2,2,2),
        (2,2,3),
        (2,3,1), 
        (2,3,2),\\ 
        (3,1,1), 
        (3,1,2), 
        (3,2,1). 
    \end{align*}
\end{Example}

The following facts are clear from the definition. 

\begin{Observation}\label{StrongP}
    \begin{enumerate}[a]
        \item If $\calD$ is a full three-dimensional Ferrers diagram, then it satisfies the strong projection property.
        \item If $\calD$ is a three-dimensional Ferrers diagram and one of $a_{\calD},b_{\calD}$ and $c_{\calD}$ is $1$, then $\calD$ is practically a two-dimensional Ferrers diagram and satisfies the strong projection property.
        \item If $\calD$ is a three-dimensional Ferrers diagram that satisfies the strong projection property, then it satisfies the projection property.
        \item Suppose that $\calD$ is a three-dimensional Ferrers diagram satisfying the strong projection property. Then all the three truncated subdiagrams 
            \[
                \Set{(i,j,k)\in \calD: i\ne i_0}, \quad
                \Set{(i,j,k)\in \calD: j\ne j_0} \text{ and }
                \Set{(i,j,k)\in \calD: k\ne k_0}
            \]
            are \emph{essentially} three-dimensional Ferrers diagrams that still satisfy the strong projection property; see also \Cref{def:essentially}. 
    \end{enumerate}
\end{Observation}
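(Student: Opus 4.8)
The plan is to dispose of (a)--(c) by unwinding \Cref{def:strongPP} and to reserve the real work for (d); throughout I would use form (b) of the strong projection property.

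For (a), every $x$-layer of $\calD=[a_{\calD}]\times[b_{\calD}]\times[c_{\calD}]$ is the full rectangle $[b_{\calD}]\times[c_{\calD}]$, so $\beta_{\calD}(\bdu)=b_{\calD}=b_{\calD^{i+1}}$ and $\gamma_{\calD}(\bdu)=c_{\calD}=c_{\calD^{i+1}}$ for all $\bdu\in\calD^{i}$, and condition (b) of \Cref{def:strongPP} holds with equality. For (b), if $a_{\calD}=1$ there is nothing to verify since $[a_{\calD}-1]=\varnothing$, and $\calD$ is plainly two-dimensional; if $b_{\calD}=1$ (the case $c_{\calD}=1$ is its flip), then $\calD$ obviously lives in the plane $j=1$, $\beta_{\calD}(\bdu)=1=b_{\calD^{i+1}}$ for every $\bdu\in\calD^{i}$, and $(i+1,1,c_{\calD^{i+1}})\in\calD$ forces $(i,1,c_{\calD^{i+1}})\in\calD$ by \Cref{3F}, whence $c_{\calD^{i+1}}\le\gamma_{\calD}(\bdu)$. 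For (c), fix $i\in[a_{\calD}-1]$; by \Cref{3F} the point $\bdu=(i,1,c_{\calD^{i+1}})$ lies in $\calD^{i}$, and condition (b) of \Cref{def:strongPP} applied to it gives $b_{\calD^{i+1}}\le\beta_{\calD}(\bdu)=\max\Set{j:(i,j,c_{\calD^{i+1}})\in\calD}$, so $(i,b_{\calD^{i+1}},c_{\calD^{i+1}})\in\calD$, which is exactly condition (a) of \Cref{shadow}.

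For (d) I would first record a ``global'' form of the strong projection property: since $(i+1,j,1)\in\calD$ implies $(i,j,1)\in\calD$ and similarly in the $z$-direction, the essential widths $b_{\calD^{i}}$ and essential heights $c_{\calD^{i}}$ are non-increasing in $i$, so $\calD$ has the strong projection property if and only if $b_{\calD^{i'}}\le\beta_{\calD}(\bdu)$ and $c_{\calD^{i'}}\le\gamma_{\calD}(\bdu)$ hold for \emph{all} pairs of layers $i<i'$ and all $\bdu\in\calD^{i}$, not only for consecutive ones. I would then check, directly from \Cref{3F}, that each of the three truncated diagrams becomes a genuine three-dimensional Ferrers diagram after one reduction along the corresponding direction: for the $x$-truncation this closes the single gap at $i=i_{0}$, and for the $y$- and $z$-truncations one deletes a slice and re-indexes, any layer emptied when $j_{0}=1$ or $k_{0}=1$ being necessarily a trailing one, so that no further reduction is needed.

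It then remains to verify the strong projection property for the three reduced diagrams. For the $x$-truncation the reduced diagram has layers $\calD^{1},\dots,\calD^{i_{0}-1},\calD^{i_{0}+1},\dots,\calD^{a_{\calD}}$ re-indexed, with all the relevant $\beta_{\calD}$, $\gamma_{\calD}$, essential-width and essential-height values unchanged; each consecutive pair of its layers is some pair $i<i'$ of layers of $\calD$, so the global form finishes it. For the $y$-truncation, write $\calD'$ for the diagram after the reduction; each $x$-layer of $\calD'$ is the corresponding layer of $\calD$ with the column $j=j_{0}$ deleted and the columns $j>j_{0}$ shifted down by one. Writing $\bdu^{*}\in\calD$ for the point corresponding to a point $\bdu$ of $\calD'$, one has $\gamma_{\calD'}(\bdu)=\gamma_{\calD}(\bdu^{*})$ and the essential height of layer $i'$ of $\calD'$ is $\le c_{\calD^{i'}}$, whereas the essential width of layer $i'$ of $\calD'$ equals $b_{\calD^{i'}}$ or $b_{\calD^{i'}}-1$, and $\beta_{\calD'}(\bdu)$ equals $\beta_{\calD}(\bdu^{*})$ or $\beta_{\calD}(\bdu^{*})-1$, the drop by $1$ occurring exactly when the corresponding quantity in $\calD$ is at least $j_{0}$. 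Feeding these relations into the inequalities $b_{\calD^{i'}}\le\beta_{\calD}(\bdu^{*})$ and $c_{\calD^{i'}}\le\gamma_{\calD}(\bdu^{*})$ supplied by the global form, a short case distinction on the positions of $b_{\calD^{i'}}$ and $\beta_{\calD}(\bdu^{*})$ relative to $j_{0}$ (the configuration $\beta_{\calD}(\bdu^{*})<j_{0}\le b_{\calD^{i'}}$ being excluded by the strong projection property of $\calD$) gives the two desired inequalities. The $z$-truncation then follows by applying the flip $\calS$, which interchanges $(b_{\calD^{i}},\beta_{\calD})$ with $(c_{\calD^{i}},\gamma_{\calD})$ and hence preserves the strong projection property, and re-running the $y$-argument on $\calS(\calD)$. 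I expect the $y$-truncation step to be the main obstacle: it requires careful bookkeeping of how the essential widths and the $\beta$-values degenerate when one column is deleted and the diagram is re-indexed, particularly in the boundary cases $j_{0}=1$ (where a layer can vanish) and $j_{0}$ exceeding some of the essential widths.
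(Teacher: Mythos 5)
Your proposal is correct, and it matches the paper's treatment: the paper records these four statements as an Observation with no proof ("The following facts are clear"), and your verification — unwinding condition (b) of \Cref{def:strongPP}, noting the monotonicity of the layerwise essential widths and heights to get the "global" form, and tracking how $\beta$, $\gamma$ and the layer widths/heights change under truncation and reduction (with the flip $\calS$ handling the $z$-case) — is exactly the routine checking the authors leave to the reader.
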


Let $\calD$ be a three-dimensional Ferrers diagram that satisfies the projection property.  In \cite[Theorem 4.1]{arXiv:1709.03251}, we proved that the complex $\Delta(\calD)$ is pure vertex-decomposable of dimension $a_{\calD}+b_{\calD}+c_{\calD}-3$, and the induction order in \Cref{set:IO} gives a shedding order. As a matter of fact, in the early draft of that paper, we showed that the usual lexicographic order gives a shedding order. However, a proof for the primeness of the $2$-minors ideals is hard to achieve if we use the lexicographic order. 

In the following, we will give a direct proof that when $\calD$ satisfies the strong projection property, then the lexicographic order gives a shedding order. Unlike the proof of \cite[Theorem 4.1]{arXiv:1709.03251}, we don't have a second stage to deal with here. Thus, the proof here is relatively shorter than the previous one. In the proof, we will use backward induction on the lattice points of $\calD$ with respect to the lexicographic order. Undoubtedly from the definition of vertex decomposable complexes, we need to check both the link complexes and the deletion complexes are pure of expected dimensions. Counting the related phantom points is our tool for achieving this goal. Of course, we need to know where to find the phantom points. The following observation is summarized from \cite[Discussion 3.7]{arXiv:1709.03251}. 

\begin{Remark}
    \label{dis3.7} 
    Let $\calD$ be a three-dimensional Ferrers diagram satisfying the projection property and assume that $a_{\calD}\ge 2$. The point $\bdu=(1,j_1,k_1)\in\calD^1$ is called a \emph{border point} if $(1,j_1+1,k_1+1)\notin \calD$.
    Let $\calB\subseteq \calD^1$ denote the set of border points of $\calD$. It has the following two special subsets:
    \begin{enumerate}[i]
        \item [\text{$\calB_y:$}] the border points on the
            $y=1,{2},\dots,b_{\calD^{\ge 2}}-1$ lines with minimal
            $z$-coordinates;
        \item [\text{$\calB_z:$}] the border points on the
            $z=1,{2},\dots,c_{\calD^{\ge 2}}-1$ lines with minimal
            $y$-coordinates.
    \end{enumerate}
    Then, $\calB_y$ and $\calB_z$ are disjoint and $\calB\setminus(\calB_y\cup\calB_z)$ is precisely $\Phan(\calD)$. Consequently, the normal points and phantom points are independent of the concrete choice of quasi-lexicographic order.
\end{Remark}

\begin{Example}
    Let $\calD$ be a typical three-dimensional Ferrers diagram that satisfies the projection property.
    In \Cref{phantom2}, the union of the cross-hatch cells 
    \begin{tikzpicture}[scale=0.5]
        \draw[pattern= crosshatch] (0,0)--(0,0.5)--(0.5,0.5)--(0.5,0)--cycle;
    \end{tikzpicture}
    provides $\calB_y$ and the union of the grid cells
    \begin{tikzpicture}[scale=0.5]
        \draw[pattern= grid] (0,0)--(0,0.5)--(0.5,0.5)--(0.5,0)--cycle;
    \end{tikzpicture} provides $\calB_z$. The remaining shaded cells
    \begin{tikzpicture}[scale=0.5]
        \draw[fill=gray!50] (0,0)--(0,0.5)--(0.5,0.5)--(0.5,0)--cycle;
    \end{tikzpicture} give the phantom points. The set $\calB$ of border points is the disjoint union of these three groups. 

    \begin{figure}[htb] 
        \begin{center}
            \scalebox{1.4}{
                \begin{tikzpicture}[scale=0.5, every node/.style={scale=0.6}]
                    \draw (7,0) node [below right]{$y$};
                    \draw (0,6) node [above left]{$z$};                 
                    \draw (-0,3) node [below left] {$c_{\calD^{\geq 2}}-1$};
                    \draw [dotted] (0,3) -- (5,3);
                    \draw (3.5,-0) node [below] {$b_{\calD^{\geq 2}}-1$};
                    \draw [dotted] (3.5,0) -- (3.5,3);
                    \draw [fill=gray!50] (0,5.5)--(0,6)--(0.5,6)--(0.5,5.5)--cycle; 
                    \draw [fill=gray!50] (1,5)--(1,5.5)--(1.5,5.5)--(1.5,5)--cycle; 
                    \draw [fill=gray!50] (1.5,4)--(1.5,5)--(2,5)--(2,4)--cycle; 

                    \draw [fill=gray!50] (5,1.5)--(5,2)--(6.5,2)--(6.5,1.5)--cycle;
                    \draw [fill=gray!50] (6.5,0.5)--(6.5,1)--(7,1)--(7,0.5)--cycle;

                    \draw[pattern= crosshatch] (0,5)--(0,5.5)--(1,5.5)--(1,5)--cycle;
                    \draw[pattern= crosshatch] (1,4.5)--(1,5)--(1.5,5)--(1.5,4.5)--cycle;
                    \draw[pattern= crosshatch] (1.5,3.5)--(1.5,4)--(3.5,4)--(3.5,3.5)--cycle;

                    \draw[fill=gray!50] (3.5,3)--(3.5,4)--(4,4)--(4,3.5)--(5,3.5)--(5,3)--cycle;

                    \draw[pattern= grid] (4.5,1.5)--(4.5,3)--(5,3)--(5,1.5)--cycle; 
                    \draw[pattern= grid] (6,0.5)--(6,1.5)--(6.5,1.5)--(6.5,0.5)--cycle; 
                    \draw[pattern=grid] (6.5,0)--(6.5,0.5)--(7,0.5)--(7,0)--cycle;

                    \draw [very thick] (0,0)--(0,6)--(0.5,6)--(0.5,5.5)--(1.5,5.5)--(1.5,5)--(2,5)--(2,4)--(4,4)--(4,3.5)--(5,3.5)--(5,2)--(6.5,2)--(6.5,1)--(7,1)--(7,0)--cycle; 

                \end{tikzpicture}}
        \end{center}
        \caption{Border points}\label{phantom2}
    \end{figure}
\end{Example}

\begin{Proposition}
    \label{induction-order-strong}
    Let $\calD$ be a three-dimensional Ferrers diagram that satisfies the strong projection property. Then the lexicographic order on $\calD$ gives a shedding order on the pure vertex-decomposable complex $\Delta(\calD)$.
\end{Proposition}

\begin{proof}
    We prove by induction on $a_{\calD}$; this will be called the \emph{outer induction process} in the proof. The base case of the induction is when $a_{\calD}=0$ and $\calD=\varnothing$. The claimed result holds trivially in this case.

    For the induction step of the outer induction process, in the following, we will assume that $a_{\calD}\ge 1$. It suffices to prove that $\Delta(\calA_\bdu(\calD))$ is pure vertex-decomposable, for each $\bdu=(1,j_0,k_0)\in \calD^1$. As a reminder, in this proof, both $\calA$ and $\calA^+$ are with respect to the lexicographic order. 
    We will prove this by backward induction with respect to the lexicographic order; this will be called the \emph{inner induction process} in the proof. The base case of the inner induction process is when we remove the whole $x=1$ layer $\calD^1$ and get $\calD^{\ge 2}$; whence, $\bdu$ is indeed $(2,1,1)$. As $a_{\calD^{\ge 2}}=a_{\calD}-1$ and $\calD^{\ge 2}$ essentially still satisfies the strong projection property, by induction on $a_\calD$, $\Delta(\calD^{\ge 2})$ is pure vertex-decomposable. This establishes the validity in the base case for the inner induction process.

    For the induction step of the inner induction process, in the following, we take a general point $\bdu=(1,j_0,k_0)\in \calD^1$.
    Without loss of generality, we may assume that $\bdu$ is a normal point with respect to $\calD$. As the lexicographic order is automatically a quasi-lexicographic order, the restriction complex  $\Delta(\calD,\calA_{\bdu}(\calD))$ is $\Delta(\calA_{\bdu}(\calD))$ by \Cref{Ob3.8Rm3.5} (a). Similarly, we have $\Delta(\calD,\calA_{\bdu}^{+}(\calD))=\Delta(\calA_{\bdu}^{+}(\calD))$.

    Firstly, we deal with the deletion complex $\Delta(\calA_{\bdu}(\calD))\setminus T_{\bdu}= \Delta(\calA_{\bdu}^{+}(\calD))$.  
    Note that for any $\bdv\in \calD^1\cap \calA_{\bdu}^+(\calD)$, the restriction complex $\Delta(\calD,\calA_{\bdv}(\calD))=\Delta(\calA_{\bdv}(\calD))$ is pure vertex decomposable with respect to the lexicographic order, by induction and by \Cref{Ob3.8Rm3.5} (a). Thus, by applying \Cref{Ob3.8Rm3.5} (b) and (c) repeatedly, we also have
    \begin{equation*}
        \dim(\Delta(\calA_{\bdu}^{+}(\calD)))=\dim(\calD^{\ge 2})+
        \#(\Phan(\calD)\cap \calA_{\bdu}^+(\calD));
    \end{equation*}
    note that only phantom points provide dimensional change.
    Here, we use $\#$ to denote the cardinality of the corresponding set.
    Notice that the phantom points do not depend on the concrete choice of quasi-lexicographic order by \Cref{dis3.7}. It follows from both \Cref{Ob3.8Rm3.5} and \cite[Theorem 4.1]{arXiv:1709.03251} that
    \[
        \dim(\Delta(\calD))=\dim(\Delta(\calD^{\ge 2}))+\#(\Phan(\calD)).
    \]
    Consequently,
    \begin{equation}
        \dim(\Delta(\calA_{\bdu}^{+}(\calD)))=\dim(\Delta(\calD))-\#(\Phan(\calD)\setminus \calA_{\bdu}^+(\calD)).
        \label{eqn:dim-Au-plus}
    \end{equation}
    Since $\bdu$ is not a phantom point, we actually have
    \begin{equation}
        \dim(\Delta(\calA_{\bdu}^{+}(\calD)))=\dim(\Delta(\calD))-\#(\Phan(\calD)\setminus \calA_{\bdu}(\calD)).
        \label{eqn:dim-Au-plus-2}
    \end{equation}

    Next, we consider the link complex $\calL_{\bdu}(\calD):=\link_{\Delta(\calA_{\bdu}(\calD))}(T_{\bdu})$. It suffices to show that $\calL_{\bdu}(\calD)$ is pure vertex decomposable of dimension $\dim(\Delta(\calA_{\bdu}^{+}(\calD)))-1$, and the lexicographic order gives a shedding order.
    Notice that $\calD$ satisfies the strong projection property. Now, we define
    \begin{equation}
        \calH\coloneqq \calZ_3^{\ge 2}(\calD,\bdu)\cup \calZ_5^{1}(\calD,\bdu)\cup \calZ_6^1(\calD,\bdu).
        \label{eqn:lex-H}
    \end{equation}
    Then, $\calL_{\bdu}(\calD)$ is the join of the restriction complex $\Delta(\calD,\calH)$ with a simplex of dimension $\gamma-k_0-1$ for $\gamma=\gamma_{\calD}(\bdu)$; cf.~the detailed calculation in the first stage proof of \cite[Theorem 4.1]{arXiv:1709.03251}. The simplex here corresponds to the set $\Set{(1,j_0,k):k_0<k\le \gamma}$.  Since this $\calH$ satisfies the detaching condition in \cite[Proposition 2.17]{arXiv:1709.03251}, we can use it to deduce that $\Delta(\calH)$ agrees with the restriction complex $\Delta(\calD,\calH)$.

    To study $\Delta(\calH)$, we turn to consider $\calD':=\calZ_3(\calD,\bdu)\cup \calZ_5^1(\calD,\bdu)\cup \calZ_6^1(\calD,\bdu)$. Notice that $\calD'$ is a three-dimensional Ferrers diagram that still satisfies the strong projection property by \Cref{StrongP} (d). The restriction of the lexicographic order on $\calD$ to $\calD'$ is surely the lexicographic order on $\calD'$. Furthermore, $\calH=\calA_{\bdu}^{+}(\calD')$. Since $\calH$ has fewer cells than $\calA_\bdu(\calD)$, by induction, $\Delta(\calH)$ is pure vertex-decomposable and the lexicographic order gives a shedding order. Now, it remains to show that
    \begin{equation}
        \dim(\Delta(\calH))+(\gamma-k_0)=\dim(\Delta(\calA_{\bdu}^+(\calD)))-1,
        \label{eqn:dim-H}
    \end{equation}
    since the left-hand side gives the dimension of $\calL_\bdu(\calD)$.
    Similar to \eqref{eqn:dim-Au-plus}, we have
    \begin{equation}
        \dim(\Delta(\calH))=\dim(\Delta(\calD'))-\#(\Phan(\calD')\setminus \calA_{\bdu}^{+}(\calD')).
        \label{eqn:dim-H-2}
    \end{equation}
    As $a_{\calD'}=a_{\calD}$, $ b_{\calD'}=b_{\calD}$ and $ c_{\calD'}=k_0$, we have
    \[
        \dim(\Delta(\calD))-\dim(\Delta(\calD'))=c_{\calD}-k_0.
    \]
    By \Cref{dis3.7}, we have

    \[
        \Phan(\calD')\setminus\calA_{\bdu}^{+}(\calD')=\Set{(1,j,k_0): b_{(\calD')^{\ge 2}}\le j\le j_0}.
    \]
    However, $b_{(\calD')^{\ge 2}}=\min(j_0,b_{\calD^{\ge 2}})$. Thus,
    \begin{equation}
        \#(\Phan(\calD')\setminus\calA_{\bdu}^{+}(\calD'))= j_0-\min(j_0,b_{\calD^{\ge 2}})+1.
        \label{eqn:phan-D-prime}
    \end{equation}
    By combining equations \eqref{eqn:dim-Au-plus-2}, \eqref{eqn:dim-H}, \eqref{eqn:dim-H-2}, and \eqref{eqn:phan-D-prime} together, it is clear that we have to show 
    \begin{equation}
        \#(\Phan(\calD)\setminus \calA_{\bdu}(\calD))=c_{\calD}+j_0-\min(j_0,b_{\calD^{\ge 2}})-\gamma.
        \label{final-eqn}
    \end{equation}
    We will write 
    $\calQ\coloneqq \Set{(1,j,k)\in \calB:j<j_0}$.
    Obviously, to show \eqref{final-eqn}, we have two cases.
    \begin{enumerate}[a]
        \item Suppose that $b_{\calD^{\ge 2}}\ge j_0$.  As $\calD$ satisfies the strong projection property, we have $\gamma=c_{\calD}$. Therefore, we deduce immediately from \Cref{dis3.7} that 
            \begin{align*}
                \Phan(\calD)\setminus\calA_{\bdu}(\calD)\subseteq \calQ\setminus \calB_y
                =\Set{(1,j,\gamma)\in \calB:j<j_0}\setminus \calB_y=\varnothing,
            \end{align*}
            which gives the desired formula \eqref{final-eqn}.

        \item Suppose instead that $b_{\calD^{\ge 2}}<j_0$.
            If we picture the set of border points $\calB$ as in \Cref{phantom2}, then the southeast extremal point of $\calQ$ is $(1,j_0-1,\gamma)$. Thus, it is not difficult to check that $\#\calQ=c_{\calD}-\gamma+j_0-1$. Notice that $\calB_y\subset \calQ$ with $\# \calB_y=b_{\calD^{\ge 2}}-1$ by \Cref{dis3.7}. Now, as $\calD$ satisfies the strong projection property, $\gamma\ge c_{\calD^{\ge 2}}$.  Since for any $(1,j,k)\in \calQ$,  one has $k\ge \gamma$. Thus, $\calB_z\cap \calQ=\varnothing$. Consequently, $\calQ\setminus\calB_y=\Phan(\calD)\setminus \calA_{\bdu}(\calD)$, giving the desired formula \eqref{final-eqn}. This completes our proof of \Cref{induction-order-strong}.  \qedhere
    \end{enumerate}
\end{proof}

We are now ready to estimate the regularity and multiplicity of three-dimensional Ferrers diagrams that satisfy the strong projection property. In particular, we find an upper bound of those invariants with the help of \Cref{full-e} and \Cref{full-reg}; see \Cref{twoD}. 

To provide such a reasonable estimate, we 
consider simultaneously two diagrams of this type. We argue by induction with respect to the lexicographic order and consider the estimation problems for the accompanied subdiagrams of these two. For that purpose, we need to analyze and compare different zones defined in \Cref{def:zones} of the given subdiagrams. 

\begin{Theorem}
    \label{two-strong}
    Let $\calD_1\subseteq \calD_2$ be two three-dimensional Ferrers diagrams that satisfy the {strong} projection property.  Then, we have
    \begin{equation}
        \r(I_{\calD_1})=\reg(\Delta(\calD_1)) \le \r(I_{\calD_2})=\reg(\Delta(\calD_2))
        \qquad \text{and}\qquad 
        \e(\Delta(\calD_1)) \le \e(\Delta(\calD_2)). \label{DD}
    \end{equation}
\end{Theorem}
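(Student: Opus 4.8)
The plan is to run the lexicographic shedding process of \Cref{induction-order-strong} on $\calD_1$ and $\calD_2$ \emph{in parallel} and to compare the two invariants through the decomposition \eqref{reg-seq}--\eqref{e-seq} step by step; this is legitimate precisely because, under the strong projection property, the common lexicographic order is a shedding order for $\Delta(\calD_1)$ and for $\Delta(\calD_2)$ at once. I would argue by induction with primary parameter $a_{\calD_2}$; the recursion into auxiliary diagrams (below) makes it necessary to prove the \emph{a priori} stronger statement
\[
    \reg(\Delta(\calA_{\bdv}(\calD_1)))\le\reg(\Delta(\calA_{\bdv}(\calD_2)))
    \quad\text{and}\quad
    \e(\Delta(\calA_{\bdv}(\calD_1)))\le\e(\Delta(\calA_{\bdv}(\calD_2)))
\]
for every point $\bdv$ lying in both $\calD_1$ and $\calD_2$, the theorem being the case $\bdv=(1,1,1)$. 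Unrolling \eqref{reg-seq} and \eqref{e-seq} over all points of the $x=1$ layer, and skipping phantom points via \eqref{reg-phantom}--\eqref{e-phantom}, gives for any strong-projection Ferrers diagram $\calD$
\begin{align*}
    \reg(\Delta(\calD)) &=\max\Bigl(\{\reg(\Delta(\calD^{\ge 2}))\}\cup\{\reg(\Delta(\calH(\calD,\bdu)))+1:\bdu\in N(\calD)\}\Bigr),\\
    \e(\Delta(\calD)) &=\e(\Delta(\calD^{\ge 2}))+\textstyle\sum_{\bdu\in N(\calD)}\e(\Delta(\calH(\calD,\bdu))),
\end{align*}
where $\calH(\calD,\bdu)=\calZ_3^{\ge 2}(\calD,\bdu)\cup\calZ_5^{1}(\calD,\bdu)\cup\calZ_6^{1}(\calD,\bdu)$ is the diagram already singled out in the proof of \Cref{induction-order-strong}, and we use that $\link_{\Delta(\calA_{\bdu}(\calD))}(T_{\bdu})$ is the join of $\Delta(\calH(\calD,\bdu))$ with a simplex, hence has the same regularity and multiplicity.

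It then suffices to dominate every entry of the $\calD_1$-formula by one on the $\calD_2$-side. The block $\reg(\Delta(\calD_1^{\ge 2}))\le\reg(\Delta(\calD_2^{\ge 2}))$ (and its $\e$-analogue) is handled by the induction hypothesis, since $\calD_1^{\ge 2}\subseteq\calD_2^{\ge 2}$, both essentially satisfy the strong projection property (the Observation following \Cref{def:strongPP}), and $a_{\calD_1^{\ge 2}}\le a_{\calD_2}-1$. For the remaining entries I need three combinatorial facts, each drawing on the strong projection property: (a) $\calD_1^{1}\subseteq\calD_2^{1}$ and every point of $\calD_1^{1}$ normal for $\calD_1$ is still normal for $\calD_2$, i.e. the normal/phantom dichotomy is monotone under enlargement of the diagram — this should follow from the description of phantom points in \cite{arXiv:1709.03251} (a phantom point is a border point of the $x=1$ layer of a specific type, cut out by conditions of the form ``$(i,\dots)\in\calD$'' that only persist as $\calD$ grows); (b) for such a $\bdu$ the zone containments $\calZ_3^{\ge 2}(\calD_1,\bdu)\subseteq\calZ_3^{\ge 2}(\calD_2,\bdu)$, $\calZ_5^{1}(\calD_1,\bdu)\subseteq\calZ_5^{1}(\calD_2,\bdu)$ and $\calZ_6^{1}(\calD_1,\bdu)\subseteq\calZ_5^{1}(\calD_2,\bdu)\cup\calZ_6^{1}(\calD_2,\bdu)$ hold — the last because $\beta_{\calD_1}(\bdu)\le\beta_{\calD_2}(\bdu)$ can only push a point of $\calZ_6$ into $\calZ_5$ — so that $\calH(\calD_1,\bdu)\subseteq\calH(\calD_2,\bdu)$; and (c) the auxiliary Ferrers diagram $\calD'(\calD_i,\bdu)=\calZ_3(\calD_i,\bdu)\cup\calZ_5^{1}(\calD_i,\bdu)\cup\calZ_6^{1}(\calD_i,\bdu)$, for which $\calH(\calD_i,\bdu)=\calA_{\bdu}^{+}(\calD'(\calD_i,\bdu))$, again satisfies the strong projection property, with $\calD'(\calD_1,\bdu)\subseteq\calD'(\calD_2,\bdu)$ and $c_{\calD'(\calD_i,\bdu)}=k_0\le c_{\calD_i}$. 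Granting (a)--(c), the inequalities $\reg(\Delta(\calH(\calD_1,\bdu)))\le\reg(\Delta(\calH(\calD_2,\bdu)))$ and its $\e$-analogue come from the (stronger) induction hypothesis applied to the smaller pair $\calD'(\calD_1,\bdu)\subseteq\calD'(\calD_2,\bdu)$ at the point $\bdv=\bdu$, after recording (as in \Cref{induction-order-strong}) that $\Delta(\calA_{\bdu}^{+}(\calD'(\calD_i,\bdu)))$ is again vertex-decomposable with lexicographic shedding order. Assembling: by (a) one has $N(\calD_1)\subseteq N(\calD_2)$, and each corresponding entry on the $\calD_1$-side is at most the one for $\calD_2$, while $N(\calD_2)\setminus N(\calD_1)$ only contributes extra nonnegative terms (for $\e$) or terms that cannot lower the maximum (for $\reg$); this yields \eqref{DD}.

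The main obstacle is twofold. First, making the induction genuinely well-founded: passing to $\calD^{\ge 2}$ drops $a_{\calD_2}$, but passing to $\calD'(\calD_i,\bdu)$ keeps $a$ fixed and only decreases $c$ — or the number of yet-unshed $x=1$ points — and in the degenerate situation $k_0=c_{\calD_i}$ with $b_{\calD_i^{\ge 2}}\le j_0$ one can even have $\calD'(\calD_i,\bdu)=\calD_i$, in which case one must observe that $\calH(\calD_i,\bdu)=\calA_{\bdu}^{+}(\calD_i)$ is a strictly later shedding state, so the secondary parameter still strictly decreases. This is exactly why the clean inductive statement is the one about the $\calA_{\bdv}$-states, and why the induction should be organized as a reverse induction on the position of $\bdv$ in the $x=1$ layer, nested inside the induction on $a_{\calD_2}$. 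Second — the genuinely combinatorial heart — one must verify the zone containments of (b) and the persistence of the strong projection property in (c); this is the step that breaks down under the plain projection property (the zones $\calZ_1$ and $\calZ_6$ at layers $\ge i+1$, which the strong projection property forces to be empty by condition (a) of \Cref{def:strongPP}, would otherwise desynchronize the two shedding processes), in agreement with the counterexample of \Cref{rmk:counter-example}.
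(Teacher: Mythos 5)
Your proposal is correct and follows essentially the same route as the paper's own proof: a parallel lexicographic shedding of $\calD_1\subseteq\calD_2$, the strengthened pointwise claim for the states $\calA_{\bdv}$, comparison of the deleted part by induction and of the links via the auxiliary diagrams $\calH(\calD_s,\bdu)=\calA_{\bdu}^{+}(\calD_s')$ with $\calD_1'\subseteq\calD_2'$ again satisfying the strong projection property, and the fact that normal points of $\calD_1$ stay normal in $\calD_2$. The differences are only organizational (you unroll the recursion into a closed formula and are more explicit about well-foundedness; the paper handles the mismatch of successor points in $\calD_1'$ versus $\calD_2'$ by comparing at the later common state and invoking \eqref{reg-compare} and \eqref{e-compare}).
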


\begin{proof}
    Once we have the regularity relationship, the reduction-number part follows from \Cref{Cor:Fib-SR}. Thus, in the following, we will focus on establishing \eqref{DD} for regularity and multiplicity. 
    We prove this by induction on $a_\calD$; this is the \emph{outer induction process} of our proof. Its base case is when $a_{\calD}=0$ and $\calD=\varnothing$. Whence, these inequalities hold trivially. In the following, we assume that $a_{\calD}\ge 1$.

    We will consider simultaneously the lexicographic order on both $\calD_1$ and $\calD_2$.  For each $\bdu\in \calD_1^1$, we will prove by backward induction with respect to the lexicographic order that
    \begin{equation}
        \reg(\Delta(\calA_{\bdu}(\calD_1))) \le \reg(\Delta(\calA_{\bdu}(\calD_2))) 
        \quad \text{and} \quad
        \e(\Delta(\calA_{\bdu}(\calD_1))) \le \e(\Delta(\calA_{\bdu}(\calD_2))). \label{reg-e-ineq-u}
    \end{equation}
    This will be the \emph{inner induction process} of our proof.
    Note that when $\bdu=(1,1,1)$, we obtain the expected inequalities in \eqref{DD}.
    As a reminder, in this proof, both $\calA_\bdu$ and $\calA_\bdu^+$ are with respect to the lexicographic order. Now, we carry out the inner induction argument.
    \begin{flushleft}
        \textbf{Base case}
        \par\end{flushleft}
    The base case of this inner induction process is when we remove the whole $x=1$ layer so that indeed $\bdu=(2,1,1)\in \calD_1$; without loss of generality, we assume that $\calD^{\ge 2}\ne \varnothing$. By induction on $a_{\calD_1}$, we surely will have
    \[
        \reg(\Delta(\calD_1^{\ge 2}))\le \reg(\Delta(\calD_2^{\ge 2}))
        \quad \text{and}\quad
        \e(\Delta(\calD_1^{\ge 2}))\le \e(\Delta(\calD_2^{\ge 2})),
    \]
    establishing the validity in the base case.
    Note that when $\calD^{\ge 2}=\varnothing$, these inequalities hold trivially.
    \begin{flushleft}
        \textbf{Induction step}
        \par\end{flushleft}  
    Now, consider a general point $\bdu\in \calD_1^1$. Let $\bdv\in \calD_1$ such that $\calA_{\bdu}^{+}(\calD_1)=\calA_{\bdv}(\calD_1)$.  Obviously, $\bdu$ precedes $\bdv$ lexicographically. Since \Cref{induction-order-strong} confirms that the lexicographic order gives a shedding order to both $\Delta(\calD_1)$ and $\Delta(\calD_2)$, we can apply the calculations in \Cref{rmk:29}.
    \begin{enumerate}[a]
        \item Suppose that $\bdu$ is a phantom point of $\calD_1$. Now, by induction, equalities \eqref{e-reg-phantom} and \eqref{e-reg-compare}, we have
            \begin{align*} 
                \reg(\calA_{\bdu}(\calD_1))&=\reg(\calA_{\bdu}^{+}(\calD_1))
                =\reg(\calA_{\bdv}(\calD_1))
                \le \reg(\calA_{\bdv}(\calD_2)) 
                \le \reg(\calA_{\bdu}(\calD_2)), 
            \end{align*}
            establishing \eqref{reg-e-ineq-u} for the regularity in this case.  Similarly, we can deduce the inequality for multiplicity.        

        \item If $\bdu$ is a normal point with respect to $\calD_1$, then it is also a normal point with respect to ${\calD_2}$ by definition. Hence, by induction and \eqref{e-reg-compare}, we have
            \begin{align*}
                \reg(\Delta(\calA_{\bdu}^{+}(\calD_1)))=  \reg(\Delta(\calA_{\bdv}(\calD_1))) 
                \le \reg(\Delta(\calA_{\bdv}(\calD_2))\le \reg(\Delta(\calA_{\bdu}^{+}(\calD_2))).
            \end{align*}
            Similarly, we have 
            \[
                \e(\Delta(\calA_{\bdu}^{+}(\calD_1))) \le \e(\Delta(\calA_{\bdu}^{+}(\calD_2))).
            \]
            For $s=1,2$, write $\calL_{\bdu}(\calD_s):=\link_{\Delta(\calA_{\bdu}(\calD_s))}(\bfT_{\bdu})$.  As in the proof of \Cref{induction-order-strong}, we know that each $\calL_{\bdu}(\calD_s)$ is a join of a simplex with $\Delta(\calH(\calD_s))$, where 
            \begin{equation} 
                \calH(\calD_s):=\calZ_3^{\ge 2}(\calD_s,\bdu)\cup \calZ_5^1(\calD_s,\bdu)\cup \calZ_6^1(\calD_s,\bdu). 
                \label{eqn-def-H} 
            \end{equation} 
            If we write 
            \begin{equation*}
                \calD_s':=\calZ_3(\calD_s,\bdu)\cup \calZ_5^1(\calD_s,\bdu)\cup \calZ_6^1(\calD_s,\bdu),
            \end{equation*}
            then $\calH(\calD_s)=\calA_{\bdu}^{+}(\calD_s')$.  Note that for each $s$, the set $\calD_s'$ is a three-dimensional Ferrers diagram that satisfies the strong projection property {by \Cref{StrongP} (d).}
            And obviously, we have the corresponding containment $\calD_1'\subseteq \calD_2'$.
            Now, for each $s$, we have $\calH(\calD_s)=\calA_{\bdu}^{+}(\calD_s')=\calA_{\bdw_s}(\calD_s')$ for some $\bdw_s\in \calD_s'$.
            As $\calH(\calD_1)=\calH(\calD_2)\cap \calD_1$, it is clear that $\bdw_2$ precedes $\bdw_1$ with respect to the lexicographic order. Thus, by induction and \eqref{e-reg-compare}, we have
            \begin{align*}
                \reg(\calL_{\bdu}(\calD_1))&=\reg(\Delta(\calH(\calD_1)))=\reg(\Delta(\calA_{\bdw_1}(\calD_1'))) \le \reg(\Delta(\calA_{\bdw_1}(\calD_2')))\\
                &\le \reg(\Delta(\calA_{\bdw_2}(\calD_2')))
                =\reg(\Delta(\calH(\calD_2))) =\reg(\calL_{\bdu}(\calD_2)).
            \end{align*}
            And similarly, we have
            \begin{align*}
                \e(\calL_{\bdu}(\calD_1))\le \e(\calL_{\bdu}(\calD_2)).
            \end{align*}
            However, by \eqref{reg-ind} and \eqref{e-ind}, we have
            \begin{align*}     
                \reg(\Delta(\calA_{\bdu}(\calD_s)))&=\max\Set{\reg(\Delta(\calA_{\bdu}^{+}(\calD_s))),\reg(\calL_{\bdu}(\calD_s))+1 },\\
                \e(\Delta(\calA_{\bdu}(\calD_s)))&=\e(\Delta(\calA_{\bdu}^{+}(\calD_s)))+\e(\calL_{\bdu}(\calD_s)), 
            \end{align*}
            for $s=1,2$. Putting the above data together, we can get the desired inequalities in \eqref{reg-e-ineq-u}.
            This completes our proof of \Cref{two-strong}. 
            \qedhere
    \end{enumerate}
\end{proof}

\begin{Remark}
    \label{rmk:counter-example}
    In the proof above, the \emph{strong projection property} condition, instead of the \emph{projection property} condition, is used to get the equality $\calH(\calD_1)=\calH(\calD_2)\cap \calD_1$. Otherwise, the subsets $\calH(\calD_s)$ for the link complexes won't be as simple as in \eqref{eqn-def-H}. Instead, one has to use the formula \eqref{eqn:complicated-H}  in the next section. Whence, it might be possible that $\calH(\calD_1)\nsubseteq \calH(\calD_2)$ and $\e(\calL_{\bdu}(\calD_1))>\e(\calL_{\bdu}(\calD_2))$.
    For instance, one can take $\calD_1$ to be the minimal three-dimensional Ferrers diagram containing $(1,3,2)$ and $(2,2,3)$, and take $\calD_2$ to be the full diagram $[2]\times[3]\times [3]$. 
   The diagram $\calD_1$ satisfies the projection property. But the strong projection property is not satisfied, since $(1,b_{\calD_1^1},c_{\calD_1^2})=(1,3,3)\notin \calD_1$. For $\bdu=(1,3,1)\in \calD_1$, we will have
    \[
        \e(\calL_{\bdu}(\calD_1))=2>\e(\calL_{\bdu}(\calD_2))=1.
    \]
\end{Remark} 

In the following, we consider an easy application of \Cref{two-strong}.

\begin{Definition}
    Let $\calD$ be a three-dimensional Ferrers diagram. Its \emph{$(x,y)$-profile} is the two-dimensional Ferrers diagram $\calP_{x,y}(\calD):=\Set{(i,j):(i,j,k)\in \calD}$. In a similar vein, one can define the $(x,z)$-profile $\calP_{x,z}(\calD)$.
\end{Definition}  

\begin{Example} \label{twoD}
    Let $\calD$ be a three-dimensional Ferrers diagram that satisfies the strong projection property.
    \begin{enumerate}[a]
        \item Let $\overline{\calP}_{x,y}:=\calP_{x,y}(\calD)\times [c_{\calD}]$. Then the pair $\calD\subseteq \overline{\calP}_{x,y}$ satisfies the assumptions in \Cref{two-strong}. Thus, by \Cref{MD:5.3} and \Cref{lem:14}, we have
            \begin{align*}
                \reg(\calF(I_{\calD}))&\le \reg(\calF(I_{\overline{\calP}_{x,y}}))\\
                &=a_{\calD}+b_{\calD}+c_{\calD}-2-\max\{a_{\calD}+b_{\calD}-1-\reg(\calF(I_{\calP_{x,y}(\calD)})),c_{\calD}\},  \\
                \intertext{and}
                \e(\calF(I_{\calD}))&\le \e(\calF(I_{\overline{\calP}_{x,y}}))
                =\binom{a_{\calD}+b_{\calD}+c_{\calD}-3}{c_{\calD}-1}\e(\calF(I_{\calP_{x,y}(\calD)})).
            \end{align*}
            Notice that $\calP_{x,y}(\calD)$ is a two-dimensional Ferrers diagram.  The associated regularity and multiplicity have been computed in Proposition 5.7 and Corollary 5.6 of \cite{MR2457403} respectively.

        \item The pair $\calD\subseteq \calX:=[a_{\calD}]\times[b_{\calD}]\times[c_{\calD}]$ satisfies the assumptions in \Cref{two-strong}. Thus, we have
            \begin{align}
                \reg(\calF(I_{\calD}))&\le \reg(\calF(I_{\calX}))=\min(a_{\calD}+b_{\calD},a_{\calD}+c_{\calD},b_{\calD}+c_{\calD})-2, \label{reg-D-X} \\
                \intertext{and}
                \e(\calF(I_{\calD}))&\le \e(\calF(I_{\calX}))=\binom{a_{\calD}+b_{\calD}+c_{\calD}-3}{a_{\calD}-1,b_{\calD}-1,c_{\calD}-1},  \notag\label{e-D-X}
            \end{align}
            by the multiplicity and regularity calculated in \Cref{full-e} and \Cref{full-reg} respectively.
    \end{enumerate}
\end{Example}

\section{General case} 
The main purpose of this section is to show that the inequality \eqref{reg-D-X} can be generalized to the case when $\calD$ only satisfies the projection property. Since we drop the stronger assumption in \Cref{two-strong}, the proof here will be inevitably more involved. 

We will use the induction order outlined in \Cref{set:IO}. It gives us a shedding order of the vertex decomposable complex $\Delta(\calD)$ by \cite[Theorem 4.1]{arXiv:1709.03251}. We will apply the standard techniques summarized in \Cref{rmk:29}. As we don't assume the strong projection property here, the induction order requires a two-stage treatment.

Throughout this section, for a three-dimensional Ferrers diagram $\calD$, we write $\mu_{\calD}:=\min(a_{\calD}+b_{\calD},a_{\calD}+c_{\calD},b_{\calD}+c_{\calD})$.

\begin{Theorem}
    \label{thm-reg}
    Let $\calD$ be a three-dimensional Ferrers diagram that satisfies the projection property. Then, 
    \begin{equation}
        \reg(\Delta(\calD))\le \mu_\calD-2.
        \label{reg-bound}
    \end{equation}
    In particular, $\r(I_{\calD})=\reg(\calF(I_\calD))\le \mu_{\calD}-2$.
\end{Theorem}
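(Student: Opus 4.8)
The plan is to pass to the Stanley--Reisner complex and run a shedding-order induction that carries the target $\mu_{\calD}-2$ through at every step. By \eqref{reg=rednum} of \Cref{Cor:Fib-SR} it suffices to establish \eqref{reg-bound}, i.e.\ $\reg(\Delta(\calD))\le\mu_{\calD}-2$. By \cite[Theorem 4.1]{arXiv:1709.03251} the lexicographic order is a shedding order on the pure vertex-decomposable complex $\Delta(\calD)$; stripping the points of $\calD^{1}$ one by one and telescoping the recursion \eqref{reg-seq} of \Cref{rmk:alg} --- using \eqref{reg-ind} at a normal point and \eqref{reg-phantom} at a phantom point --- gives
\[
    \reg(\Delta(\calD))=\max\Big(\{\reg(\Delta(\calD^{\ge 2}))\}\cup\bigl\{\reg(\calL_{\bdu}(\calD))+1 : \bdu\in N(\calD)\bigr\}\Big),
\]
where $\calL_{\bdu}(\calD)=\link_{\Delta(\calA_{\bdu}(\calD))}(T_{\bdu})$. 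So it is enough to bound $\reg(\Delta(\calD^{\ge 2}))$ and each $\reg(\calL_{\bdu}(\calD))+1$ by $\mu_{\calD}-2$. I would induct on $|\calD|$; the base cases $\min(a_{\calD},b_{\calD},c_{\calD})\le 1$ are essentially two-dimensional, hence satisfy the strong projection property, and are covered by \Cref{reg-2d} (or by \Cref{full-reg} and \Cref{two-strong}) using the symmetry of $\calF(I_{\calD})$ in its three blocks of variables.

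The term $\reg(\Delta(\calD^{\ge 2}))$ is easy: since $a_{\calD^{\ge 2}}=a_{\calD}-1$, $b_{\calD^{\ge 2}}\le b_{\calD}$ and $c_{\calD^{\ge 2}}\le c_{\calD}$, each of the three pairwise sums defining $\mu$ weakly decreases, so $\mu_{\calD^{\ge 2}}\le\mu_{\calD}$, and the inductive hypothesis (together with the fact that $\calD^{\ge 2}$ still satisfies the projection property) yields $\reg(\Delta(\calD^{\ge 2}))\le\mu_{\calD^{\ge 2}}-2\le\mu_{\calD}-2$. The real content is the link terms $\reg(\calL_{\bdu}(\calD))+1$ for a normal point $\bdu=(1,j_{0},k_{0})$. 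Here the link computation of \cite[Lemma 4.2]{arXiv:1709.03251} exhibits $\calL_{\bdu}(\calD)$ as the join of a simplex with the restriction complex $\Delta(\calH_{\bdu})$ of an explicit subdiagram $\calH_{\bdu}$, so $\reg(\calL_{\bdu}(\calD))=\reg(\Delta(\calH_{\bdu}))$ and the goal becomes $\reg(\Delta(\calH_{\bdu}))\le\mu_{\calD}-3$. I would realize $\Delta(\calH_{\bdu})$ as $\Delta(\calA_{\bdu}^{+}(\calD'))$ for a three-dimensional Ferrers diagram $\calD'$ still satisfying the projection property, invoke the inductive hypothesis on $\calD'$ and the monotonicity \eqref{reg-compare} (stripping a prefix of the shedding order does not raise the regularity), and finish with a short case analysis on which of $a_{\calD},b_{\calD},c_{\calD}$ is largest --- in the model coming from \Cref{induction-order-strong} one has $(\calD')^{\ge 2}\subseteq[a_{\calD}-1]\times[j_{0}]\times[k_{0}]$ with $j_{0}\le b_{\calD}$ and $k_{0}\le c_{\calD}$, and $(\calD')^{\ge 2}$ reduces to a full rectangle whose regularity is given by \Cref{full-reg}, from which $\reg(\Delta(\calH_{\bdu}))\le\mu_{\calD}-3$ falls out.

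The main obstacle is that this last step is genuinely harder once only the projection property is assumed. When $\calD$ is not strongly projective the higher zones $\calZ_{1}$ and $\calZ_{6}$ of $\bdu$ need not be empty, so $\calL_{\bdu}(\calD)$ is no longer governed by the clean description $\calZ_{3}^{\ge 2}\cup\calZ_{5}^{1}\cup\calZ_{6}^{1}$ of \Cref{induction-order-strong} but by a more elaborate one (cf.\ \Cref{rmk:counter-example}); identifying $\calH_{\bdu}$ with $\calA_{\bdu}^{+}$ of a small Ferrers diagram and checking the corresponding $\mu$-inequality therefore demands considerably more careful bookkeeping. Most delicate are the ``boundary'' normal points --- for instance the last point of the order, where $\calD'$ threatens to equal $\calD$ and $\calH_{\bdu}$ collapses to $\calD^{\ge 2}$, so the shrinking argument degenerates --- for which one must use the phantom/normal classification of such points to see that $\reg(\Delta(\calH_{\bdu}))$ still comes in at most $\mu_{\calD}-3$. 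Finally, one should resist routing the proof through the inclusion $\calD\subseteq[a_{\calD}]\times[b_{\calD}]\times[c_{\calD}]$ as in \eqref{reg-D-X}: without the strong projection property the two diagrams no longer shed in sync (as \Cref{rmk:counter-example} shows, even the multiplicity of a link of the larger diagram can be smaller), so the bound $\mu_{\calD}-2$ has to be propagated through the recursion itself.
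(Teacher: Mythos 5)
Your outline does match the paper's strategy at the top level (reduce via \Cref{Cor:Fib-SR}, shed the $x=1$ layer, bound $\reg(\Delta(\calD^{\ge 2}))$ by induction, and bound each link term by $\mu_{\calD}-3$), but the proposal has two concrete gaps. First, the shedding order: you invoke \cite[Theorem 4.1]{arXiv:1709.03251} to claim the \emph{lexicographic} order sheds $\Delta(\calD)$ under the projection property, but that theorem establishes the two-stage \emph{induction order} of \Cref{set:IO} (remove lexicographically only while $k\le c_{\calD^{\ge 2}}$, then flip); the lexicographic order is only proved to shed under the \emph{strong} projection property (\Cref{induction-order-strong}). So your telescoped recursion via \eqref{reg-seq} is not justified as stated; the paper runs the induction along the induction order, and the flip is not cosmetic --- it is precisely why the second stage needs its own argument (\Cref{lex-layer-1}).

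Second, and more seriously, the decisive inequality $\reg(\Delta(\calH_{\bdu}))\le \mu_{\calD}-3$ is asserted rather than proved. Your sketch routes it through $(\calD')^{\ge 2}\subseteq[a_{\calD}-1]\times[j_0]\times[k_0]$ together with \Cref{full-reg}, but (i) regularity is not known to be monotone under containment of diagrams when only the projection property holds --- that is exactly why \Cref{two-strong} requires the strong projection property and why, as you yourself note, one cannot route the argument through \eqref{reg-D-X}; and (ii) even granting such monotonicity, a box bound gives only a ``$\min(\cdot)-2$'' estimate with no source for the extra $-1$ (e.g.\ $k_0$ may be as large as $c_{\calD}$, and $\calH_{\bdu}$ is not governed by $\calZ_3^{\ge2}\cup\calZ_5^1\cup\calZ_6^1$ but by the larger diagram in \eqref{complicated-H}, including $\calZ_1^{\ge 2}$ and the points with $k>c_{\calD^{\ge 2}}$). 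The actual proof extracts strict decreases of $\mu$ from the normal/phantom dichotomy and the structure of the order: normality of $\bdu$ forces $k_0<\gamma_{\calD}(\bdu)$, which makes $c$ drop strictly in the degenerate case $j_0=b_{\calD}$; in the second stage the existence of $\bdw=(1,b_{\calD^{\ge 2}}+1,1)$ forces $b_{\calD}>b_{\calD^{\ge 2}}$, hence $\mu_{\calD^{\ge 2}}<\mu_{\calD}$ (\Cref{lex-layer-1}); and when $\calH^1\ne\varnothing$ one needs the nested two-stage induction of \Cref{reg-lem}, where $b$ drops by one and $c$ drops strictly, so $\mu_{\overline{\calD}}\le\mu_{\widetilde{\calD}}-1$, and its base case needs the separate argument that $\calH^1\ne\varnothing$ forces $\mu_{\widetilde{\calD}^{\ge 2}}<\mu_{\widetilde{\calD}}$. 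You flag all of this as ``careful bookkeeping,'' but that bookkeeping \emph{is} the proof: without those strict $\mu$-drops the induction only yields $\mu_{\calD}-2$ for the links, hence $\mu_{\calD}-1$ for $\reg(\Delta(\calD))$, which is weaker than the theorem.
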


\begin{proof}
    The ``in particular'' part follows from \Cref{Cor:Fib-SR} and the inequality \eqref{reg-bound}. Thus, in the following, we will focus on proving \eqref{reg-bound}. For this purpose, we prove by induction on $a_\calD$; this will be the \emph{outer induction process} in the proof.

    The base case of the outer induction process is when $a_{\calD}=1$. In this situation, the diagram $\calD$ trivially satisfies the strong projection property. Thus, the inequality \eqref{reg-bound} follows from  \eqref{reg-D-X} and \Cref{full-reg}. 
    Therefore, in the following, we will assume that $a_{\calD}\ge 2$. By symmetry, we will also assume that $b_\calD$ and $c_{\calD}$ are at least $2$.

    To achieve the inequality \eqref{reg-bound}, we use backward induction with respect to the order outlined in \Cref{set:IO} and prove that
    \begin{equation}
        \reg(\Delta(\calA_{\bdu}(\calD)))\le \mu_{\calD}-2
        \label{reg-ineq}
    \end{equation} 
    for each $\bdu\in \calD^1$ in the \emph{first stage} with respect to $\calD$.
    Then, the claimed estimate \eqref{reg-bound} follows when we choose $\bdu=(1,1,1)$. This will be the \emph{inner induction process} of this proof.

    As a reminder, within this proof, both $\calA$ and $\calA^+$ are with respect to this induction order, unless explicitly stated otherwise. Notice that this total order gives us a shedding order of the vertex decomposable complex $\Delta(\calD)$ by \cite[Theorem 4.1]{arXiv:1709.03251}. Hence, we can apply the standard techniques summarized in \Cref{rmk:29}. Furthermore, still from the proof of \cite[Theorem 4.1]{arXiv:1709.03251}, we can see that $\Delta(\calA_{\bdv}(\calD))$ is pure vertex-decomposable and coincides with the restriction of $\Delta(\calD)$ to $\calA_{\bdv}(\calD)$ for each $v\in \calD$. Now, we start the induction argument of the inner induction process. 
    \begin{flushleft}
        \textbf{Base case}
        \par\end{flushleft}
    We have two subcases here.
    \begin{enumerate}[a]
        \item Suppose that the points of the second stage exist, i.e., $\calC_2({\calD})\ne \varnothing$. Then, we need to prove \eqref{reg-ineq} for $\bdu=(1,1,c_{{\calD}^{\ge 2}}+1)$, the initial element in the second stage. For that purpose, we flip ${\calD}$ to get $\calS({\calD})$ and write it as ${\calD}'$ for simplicity; see also \Cref{set:IO}.  Then, $\calS(\calA_\bdv({\calD}))$ is precisely $\calA_{\calS(\bdv)}({\calD}')$ for each $\bdv\in \calC_2(\calD)$ in the first stage; the $\calA$ in $\calA_{\calS(\bdv)}({\calD}')$ is with respect to the \emph{lexicographic order} on $\calD'$. 
            Consequently, $\Delta(\calD',\calA_{\calS(v)}(\calD'))=\Delta(\calA_{\calS(\bdv)}(\calD'))$ is vertex-decomposable and the lexicographic order gives a shedding order (at least for the points in the $x=1$ layer).
            Since $b_{({\calD}')^{\ge 2}}<b_{{\calD}'}$ while $\mu_{{\calD}}=\mu_{{\calD}'}$, we can apply subsequent \Cref{lex-layer-1} to complete the proof.   

        \item Suppose instead that $\calC_2({\calD}) = \varnothing$. Then, we need to prove \eqref{reg-ineq} for $\bdu=(2,1,1)$, the initial element of the second layer; without loss of generality, we assume that $\calD^{\ge 2}\ne \varnothing$. By induction on $a_{\calD}$ in the outer induction process, we surely will have
            \[
                \reg(\Delta(\calD^{\ge 2}))\le \mu_{\calD^{\ge 2}}-2
            \]
            from \eqref{reg-bound}. Notice that $a_{\calD^{\ge 2}}=a_{\calD}-1$ while $b_{\calD^{\ge 2}}\le b_{\calD}$ and $c_{\calD^{\ge 2}}\le c_{\calD}$. Consequently, $\mu_{\calD^{\ge 2}}\le \mu_{\calD}$, establishing the validity of \eqref{reg-ineq} in the base case. 
    \end{enumerate}   
    \begin{flushleft}
        \textbf{Induction step}
        \par\end{flushleft}
    Now, take a general $\bdu=(1,j_0,k_0)\in \calD^1$ in the first stage. Without loss of generality, we assume that $\bdu$ is a normal point with respect to $\calD$. By induction, we have $\reg(\Delta(\calA_{\bdu}^{+}(\calD)))\le \mu_{\calD}-2$. Thus, in view of equation \eqref{reg-ind} in \Cref{rmk:29}, it suffices to show that 
    \begin{equation*}
        \reg(\link_{\Delta(\calA_{\bdu}(\calD))}(T_{\bdu}))\le \mu_{\calD}-3.
    \end{equation*} 
    As explicitly shown in the proof of \cite[Theorem 4.1]{arXiv:1709.03251},  the link complex $\link_{\Delta(\calA_{\bdu}(\calD))}(T_{\bdu})$ is the join of a simplex with $\Delta(\calH)$, where the subset 
    \begin{equation}
        \calH\coloneqq \calZ_1^{\ge 2}(\calD,\bdu) \cup \calZ_3^{\ge 2}(\calD,\bdu) \cup \calZ_5^1(\calD,\bdu)\cup \calZ_6^1 (\calD,\bdu) 
        \cup \Set{(1,j,k)\in \calD: j\le j_0 \text{ and } k>c_{\calD^{\ge 2}}};
        \label{eqn:complicated-H}
    \end{equation}
    see also \Cref{exam:6.2}.
    As in the proof of \cite[Lemma 4.3]{arXiv:1709.03251}, we turn to consider the diagram
    \begin{equation}
        \widetilde{\calD}\coloneqq \calZ_3(\calD,\bdu) \cup \calZ_5^1(\calD,\bdu) \cup \calZ_6^{1}(\calD,\bdu) \cup \Set{(i,j,k)\in \calD: j\le j_0 \text{ and } k>\min(\gamma,c_{\calD^{\ge 2}})} 
        \label{eqn:tilde-D}
    \end{equation}
    for $\gamma=\gamma_{\calD}(\bdu)$.
    The new concise diagram $\widetilde{\calD}$ is \emph{essentially} a three-dimensional Ferrers diagram satisfying the projection property. In addition, $\widetilde{\calD}$ contains $\calH$ and $\calH=\calA_{\bdu}^{+}(\widetilde{\calD})$. The inequality that we want to show is then translated into
    \begin{equation}
        \reg(\Delta(\calH))\le \mu_{\calD}-3.
        \label{reg-ineq-colon-H}
    \end{equation} 
    \begin{enumerate}[a]
        \item Suppose that $\calH^1=\varnothing$. Since we have assumed earlier that $b_\calD\ge 2$, this happens precisely when $j_0=b_{\calD}>1$ and $c_{\calD^{\ge 2}}=c_{\calD}$.
            Whence,
            $\calH=\widetilde{\calD}^{\ge 2}=\calZ_1^{\ge 2}(\calD,\bdu)\cup \calZ_3^{\ge 2}(\calD,\bdu)$. Note that we always have $k_0\le \gamma$.
            \begin{enumerate}[i]
                \item If $k_0=\gamma$, then $\bdu$ is a phantom point.  But we have assumed that $\bdu$ is a normal point. This cannot happen.
                \item If $k_0<\gamma$, then $a_{\widetilde{\calD}^{\ge 2}}= a_{\calD}-1$, $b_{\widetilde{\calD}^{\ge 2}}\le b_{\calD}$ and $c_{\widetilde{\calD}^{\ge 2}}\le c_{\calD}-(\gamma-k_0)<c_{\calD}$. 
                    Since $\calH$ has fewer points than $\calA_\bdu(\calD)$, by induction, 
                    \begin{align*}
                        \reg(\link_{\Delta(\calA_{\bdu}(\calD))}(T_{\bdu}))& =\reg(\Delta(\calH))=\reg(\Delta(\calH^{\ge 2}))
                        \le \mu_{\widetilde{\calD}^{\ge 2}}-2<\mu_{\calD}-2, 
                    \end{align*}
                    establishing the expected inequality \eqref{reg-ineq-colon-H} in this case.
            \end{enumerate}

        \item Suppose instead that $\calH^1\ne \varnothing$.  We will prove in \Cref{reg-lem} that $\reg(\Delta(\calH))\le\mu_{\widetilde{\calD}}-3$. As obviously $\mu_{\widetilde{\calD}}\le\mu_\calD$, we still get the expected inequality \eqref{reg-ineq-colon-H} in this case. And this completes our proof of \Cref{thm-reg}. \qedhere
    \end{enumerate}
\end{proof}

Before fully proving \Cref{thm-reg} by establishing the accompanied lemmas, we provide an example with all regions and sub-diagrams involved in the proof of the theorem.

\begin{Example}
    \label{exam:6.2}
    Let $\calD$ be a typical three-dimensional Ferrers diagram that satisfies the projection property. Let $\bdu$ be a point in the first stage. When
    the subset $\calH$ introduced in \eqref{eqn:complicated-H} is non-empty, 
    in \Cref{H}, we present the $\calH^1$ by the shaded regions with two different cases: $\gamma\leq c_{\calD^{\ge 2}}$ (left-hand side) and $\gamma>c_{\calD^{\ge 2}}$ (right-hand side). The six zones are with respect to $\calD$ and $\bdu$.  Meanwhile, \Cref{D} shows the corresponding regions of $\widetilde{\calD}^1$ for the diagram $\widetilde{\calD}$ introduced in \eqref{eqn:tilde-D}. Note that $\calH^{\geq 2}=\widetilde{\calD}^{\geq 2}$, which is not pictured here.
    \begin{figure}[htb] 
        \begin{center}
            \scalebox{1.4}{
                \begin{tikzpicture}[thick, scale=0.5, every node/.style={scale=0.6}]

                    \draw (7,0) node [below right]{$y$};
                    \draw (0,6) node [above left]{$z$};  

                    \path (0,0)--(0,4)  node [below left]{$\gamma$};

                    \path (0,-0.2)--(5,-0.2) node [below left]{$\beta$};

                    \draw (0,4)--(2,4)--(2,5)--(1.5,5)--(1.5,5.5)--(0.5,5.5)--(0.5,6)--(0,6)--cycle;
                    \draw [fill=gray!50] (0,5)--(1.5,5)--(1.5,5.5)--(0.5,5.5)--(0.5,6)--(0,6)--cycle;
                    \node [right] at (0.5,4.7) {$\mathcal{Z}_1$};
                    \draw [dotted] (0,5) -- (1.5,5);
                    \path (0,4.7)--(0,4.7) node [ left]{$c_{\calD^{\geq 2}}$};
                    \draw  (0,2.5)--(2.5,2.5)--(2.5,4)--(0,4)--cycle;
                    \node [right] at (0.5,3.3) {$\mathcal{Z}_2$};
                    \draw (10,3)--(10,2.5);

                    \draw  (0,0)--(2.5,0)--(2.5,2.5)--(0,2.5)--cycle;

                    \node [right] at (0.5,1.2) {$\mathcal{Z}_3$};

                    \draw [pattern=crosshatch] (2.5,2.5)--(2,2.5)--(2,2)--(2.5,2)--cycle; 
                    \node [above right] at (4,5) {$(j_0,k_0)$}; 
                    \draw [->, thin] (2.25,2.25) to [bend left] (4,5);
                    \draw [dotted] (2,0) -- (2,5);
                    \path (1.7,0)--(1.7,0) node [below]{$b_{\calD^{\geq 2}}$};
                    \draw  (2.5,2.5)--(5,2.5)--(5,3.5)--(4,3.5)--(4,4)--(2.5,4)--cycle;
                    \node [right] at (3.2,3.3) {$\mathcal{Z}_4$};

                    \draw (2.5,0)--(5,0)--(5,2.5)--(2.5,2.5)--cycle;

                    \draw [fill=gray!50] (2.5,0)--(5,0)--(5,2.5)--(2.5,2.5)--cycle;
                    \node [right] at (3.2,1.2) {$\mathcal{Z}_5$};
                    \draw   (5,0)--(7,0)--(7,1)--(6.5,1)--(6.5,2)--(5,2)--cycle;

                    \draw [fill=gray!50] (5,0)--(7,0)--(7,1)--(6.5,1)--(6.5,2)--(5,2)--cycle;
                    \draw (5.9,1.2) node {$\mathcal{Z}_6$};

                    \draw (17,0) node [below right]{$y$};
                    \draw (10,6) node [above left]{$z$};  

                    \path (10,0)--(10,4) node [below left]{$\gamma$};

                    \path (10,-0.2)--(15,-0.2) node [below left]{$\beta$};

                    \draw [fill=gray!50] (10,4)--(10,6)--(10.5,6)--(10.5,5.5)--(11.5,5.5)--(11.5,5)--(12,5)--(12,4)--cycle;
                    \node [right] at (10.5,4.7) {$\mathcal{Z}_1$};
                    \draw [dotted] (10,3) -- (11.5,3);
                    \path (10,2.7)--(10,2.7) node [ left]{$c_{\calD^{\geq 2}}$};

                    \draw [fill=gray!50] (10,4)--(12.5,4)--(12.5,3)--(10,3)--cycle;
                    \node [right] at (10.5,3.3) {$\mathcal{Z}_2$};

                    \draw  (10,0)--(12.5,0)--(12.5,2.5)--(10,2.5)--cycle;

                    \node [right] at (10.5,1.2) {$\mathcal{Z}_3$};

                    \draw [pattern=crosshatch] (12.5,2.5)--(12,2.5)--(12,2)--(12.5,2)--cycle; 
                    \node [above right] at (14,5) {$(j_0,k_0)$}; 
                    \draw [->, thin] (12.25,2.25) to [bend left] (14,5);
                    \draw [dotted] (12,0) -- (12,5);
                    \path (11.7,0)--(11.7,0) node [ below]{$b_{\calD^{\geq 2}}$};
                    \draw  (12.5,2.5)--(15,2.5)--(15,3.5)--(14,3.5)--(14,4)--(12.5,4)--cycle;

                    \node [right] at (13.2,3.3) {$\mathcal{Z}_4$};

                    \draw  (12.5,0)--(15,0)--(15,2.5)--(12.5,2.5)--cycle;

                    \draw [fill=gray!50] (12.5,0)--(15,0)--(15,2.5)--(12.5,2.5)--cycle;
                    \node [right] at (13.2,1.2) {$\mathcal{Z}_5$};
                    \draw   (15,0)--(17,0)--(17,1)--(16.5,1)--(16.5,2)--(15,2)--cycle;

                    \draw [fill=gray!50] (15,0)--(17,0)--(17,1)--(16.5,1)--(16.5,2)--(15,2)--cycle;
                    \draw (15.9,1.2) node {$\mathcal{Z}_6$};  
                \end{tikzpicture}}
        \end{center}
        \caption{$\calH^1$ }\label{H}
    \end{figure}

    \begin{figure}[htb] 
        \begin{center}
            \scalebox{1.4}{
                \begin{tikzpicture}[thick, scale=0.5, every node/.style={scale=0.6}]
                    \draw (7,0) node [below right]{$y$};
                    \draw (0,6) node [above left]{$z$};  

                    \path (0,0)--(0,4)  node [below left]{$\gamma$};

                    \path (0,-0.2)--(5,-0.2) node [below left]{$\beta$};

                    \draw (0,4)--(2,4)--(2,5)--(1.5,5)--(1.5,5.5)--(0.5,5.5)--(0.5,6)--(0,6)--cycle;
                    \draw [fill=gray!50](0,4)--(2,4)--(2,5)--(1.5,5)--(1.5,5.5)--(0.5,5.5)--(0.5,6)--(0,6)--cycle;
                    \node [right] at (0.5,4.7) {$\mathcal{Z}_1$};
                    \draw [dotted] (0,5) -- (1.5,5);
                    \path (0,4.7)--(0,4.7) node [ left]{$c_{\calD^{\geq 2}}$};
                    \draw (0,2.5)--(2.5,2.5)--(2.5,4)--(0,4)--cycle;
                    \node [right] at (0.5,3.3) {$\mathcal{Z}_2$};

                    \draw  (0,0)--(2.5,0)--(2.5,2.5)--(0,2.5)--cycle;
                    \draw [fill=gray!50](0,0)--(2.5,0)--(2.5,2.5)--(0,2.5)--cycle;
                    \node [right] at (0.5,1.2) {$\mathcal{Z}_3$};

                    \draw [pattern=crosshatch] (2.5,2.5)--(2,2.5)--(2,2)--(2.5,2)--cycle; 
                    \node [above right] at (4,5) {$(j_0,k_0)$}; 
                    \draw [->, thin] (2.25,2.25) to [bend left] (4,5);
                    \draw [dotted] (2,0) -- (2,5);
                    \path (1.7,0)--(1.7,0) node [ below]{$b_{\calD^{\geq 2}}$};
                    \draw (2.5,2.5)--(5,2.5)--(5,3.5)--(4,3.5)--(4,4)--(2.5,4)--cycle;
                    \node [right] at (3.2,3.3) {$\mathcal{Z}_4$};

                    \draw (2.5,0)--(5,0)--(5,2.5)--(2.5,2.5)--cycle;

                    \draw [fill=gray!50](2.5,0)--(5,0)--(5,2.5)--(2.5,2.5)--cycle;
                    \node [right] at (3.2,1.2) {$\mathcal{Z}_5$};
                    \draw (5,0)--(7,0)--(7,1)--(6.5,1)--(6.5,2)--(5,2)--cycle;

                    \draw [fill=gray!50](5,0)--(7,0)--(7,1)--(6.5,1)--(6.5,2)--(5,2)--cycle;
                    \draw (5.9,1.2) node {$\mathcal{Z}_6$};

                    \draw (17,0) node [below right]{$y$};
                    \draw (10,6) node [above left]{$z$};
                    \draw [pattern=crosshatch] (12.5,2.5)--(12,2.5)--(12,2)--(12.5,2)--cycle; 
                    \node [above right] at (14,5) {$(j_0,k_0)$}; 
                    \draw [->, thin] (12.25,2.25) to [bend left] (14,5);     

                    \draw (17,0) node [below right]{$y$};
                    \draw (10,6) node [above left]{$z$};  

                    \path (10,0)--(10,4)  node [below left]{$\gamma$};

                    \path (10,-0.2)--(15,-0.2) node [below left]{$\beta$};

                    \draw  (10,4)--(12,4)--(12,5)--(11.5,5)--(11.5,5.5)--(10.5,5.5)--(10.5,6)--(10,6)--cycle;
                    \draw [fill=gray!50](10,4)--(10,6)--(10.5,6)--(10.5,5.5)--(11.5,5.5)--(11.5,5)--(12,5)--(12,4)--cycle;
                    \node [right] at (10.5,4.7) {$\mathcal{Z}_1$};
                    \draw [dotted] (10,3) -- (11.5,3);
                    \path (10,2.7)--(10,2.7) node [ left]{$c_{\calD^{\geq 2}}$};
                    \draw (10,2.5)--(12.5,2.5)--(12.5,4)--(10,4)--cycle;
                    \draw [fill=gray!50](10,4)--(12.5,4)--(12.5,3)--(10,3)--cycle;
                    \node [right] at (10.5,3.3) {$\mathcal{Z}_2$};

                    \draw  (10,0)--(12.5,0)--(12.5,2.5)--(10,2.5)--cycle;
                    \draw [fill=gray!50](10,0)--(12.5,0)--(12.5,2.5)--(10,2.5)--cycle;
                    \node [right] at (10.5,1.2) {$\mathcal{Z}_3$};

                    \draw [pattern=crosshatch] (12.5,2.5)--(12,2.5)--(12,2)--(12.5,2)--cycle; 
                    \node [above right] at (14,5) {$(j_0,k_0)$}; 
                    \draw [->, thin] (12.25,2.25) to [bend left] (14,5);
                    \draw [dotted] (12,0) -- (12,5);
                    \path (11.7,0)--(11.7,0) node [ below]{$b_{\calD^{\geq 2}}$};
                    \draw (12.5,2.5)--(15,2.5)--(15,3.5)--(14,3.5)--(14,4)--(12.5,4)--cycle;
                    \node [right] at (13.2,3.3) {$\mathcal{Z}_4$};

                    \draw (12.5,0)--(15,0)--(15,2.5)--(12.5,2.5)--cycle;

                    \draw [fill=gray!50](12.5,0)--(15,0)--(15,2.5)--(12.5,2.5)--cycle;
                    \node [right] at (13.2,1.2) {$\mathcal{Z}_5$};
                    \draw (15,0)--(17,0)--(17,1)--(16.5,1)--(16.5,2)--(15,2)--cycle;

                    \draw [fill=gray!50](15,0)--(17,0)--(17,1)--(16.5,1)--(16.5,2)--(15,2)--cycle;
                    \draw (15.9,1.2) node {$\mathcal{Z}_6$};  
                \end{tikzpicture}}
        \end{center}
        \caption{$\widetilde{\calD}^1$}\label{D}
    \end{figure}

\end{Example}

The following lemma is used for proving \Cref{thm-reg} when we deal with the points in the second stage of diagram $\calD$. It is also needed by \Cref{reg-lem} in a special case.

\begin{Lemma}
    \label{lex-layer-1}
    Let $\calD$ be a three-dimensional Ferrers diagram that satisfies the projection property. 
    Suppose that $b_\calD>b_{\calD^{\ge 2}}$.
    Let $\calL=\calA_{\bdw}(\calD)$ with respect to the lexicographic order for the point $\bdw=(1,b_{\calD^{\ge 2}}+1,1)\in \calD$.
    Suppose that for each $\bdv\in \calL^1$, the restriction complex $\Delta(\calD,\calA_\bdv(\calD))$ is $\Delta(\calA_\bdv(\calD))$. Furthermore, suppose that $\Delta(\calL)=\Delta(\calD,\calL)$ is pure vertex-decomposable and the lexicographic order gives a shedding order (at least for the points in the first layer). Then, the regularity of $\Delta(\calL)$ is bounded above by $\mu_{\calD}-3$.
\end{Lemma}

\begin{proof}
    It suffices to prove by backward induction with respect to the lexicographic order that
    \begin{equation}
        \reg(\Delta(\calA_{\bdu}(\calD)))\le \mu_{\calD}-3
        \label{lex-1}
    \end{equation}
    for each $\bdu=(1,j_0,k_0)\in \calL^1$.
    \begin{flushleft}
        \textbf{Base case}
        \par\end{flushleft}
    The base case for this induction process is when we remove $\calL^1$ and get $\calL^{\ge 2}=\calD^{\ge 2}$. 
    Since $a_{\calD}>a_{\calD^{\ge 2}}$ and $b_{\calD}>b_{\calD^{\ge 2}}$,
    we have $\mu_{\calD}>\mu_{\calD^{\ge 2}}$. Again, as $a_{\calD^{\ge 2}}<a_{\calD}$, by induction, it is legal to apply \eqref{reg-bound} to get 
    \[
        \reg(\Delta(\calD^{\ge 2}))\le \mu_{\calD^{\ge 2}}-2 <\mu_{\calD}-2,
    \]
    confirming \eqref{lex-1} in this base case.
    \begin{flushleft}
        \textbf{Induction step}
        \par\end{flushleft}
    Consider a general $\bdu\in \calL^1$.
    As in the proof of \Cref{induction-order-strong}, we may assume that $\bdu$ is a normal point with respect to  $\calD$ and try to prove that
    \begin{equation}
        \reg(\link_{\Delta(\calA_{\bdu}(\calD))}(T_{\bdu}))\le \mu_{\calD}-4.
        \label{eqn:link-stange-2}
    \end{equation}
    Similarly, one can introduce $\calH$ with respect to $\calD$ and $\bdu$. To be more precise, here
    \[
        \calH:=\calZ_1^{\ge 2}(\calD,\bdu) \cup \calZ_3^{\ge 2}(\calD,\bdu) \cup \calZ_5^1(\calD,\bdu)\cup \calZ_6^1 (\calD,\bdu);
    \]
    unlike in \eqref{eqn:lex-H}, we have an additional $\calZ_1^{\ge 2}(\calD,\bdu)$, since we don't assume strong projection property here.
    Whence, $\link_{\Delta(\calA_{\bdu}(\calD))}(T_{\bdu})$ is the join of a simplex with $\Delta(\calD,\calH)=\Delta(\calH)$.
    The expected inequality \eqref{eqn:link-stange-2} is then equivalent to 
    \begin{equation}
        \reg(\Delta(\calH))\le \mu_{\calD}-4.
        \label{lex-3}
    \end{equation}
    Notice that for each lattice point $(i,j,k)\in \calZ_1^{\ge 2}(\calD,\bdu) \cup \calZ_3^{\ge 2}(\calD,\bdu)$, we have indeed that $j\le b_{\calD^{\ge 2}}<j_0$. Thus, an ambient concise diagram for $\calH$ can be chosen as
    \[
        \calD':=\Set{(i,j,k)\in \calZ_1(\calD,\bdu) \cup \calZ_3(\calD,\bdu): j<j_0} \cup \calZ_5^1(\calD,\bdu)\cup \calZ_6^1 (\calD,\bdu). 
    \]

    This set is \emph{essentially} a three-dimensional Ferrers diagram that still satisfies the projection property.  Notice that 
    \[
        \calH=
        \begin{dcases*}
            \calA_{(1,j_0+1,1)}(\calD'), & if $\calZ_5^1(\calD,\bdu)\cup \calZ_6^1 (\calD,\bdu)\ne \varnothing$,\\
            \calA_{(2,1,1)}(\calD'),& otherwise.       
        \end{dcases*}
    \]
    Furthermore, as $\bdu$ is not a phantom point and $j_0>b_{\calD^{\ge 2}}$, we have $\bdu\in \calB_z\cup (\calD^1\setminus \calB)$ by \Cref{dis3.7}. Consequently,
    $k_0<\gamma_{\calD}(\bdu)$. Thus, $a_{\calD'}=a_{\calD}$, $b_{\calD'}=b_{\calD}-1$ and $c_{\calD'}=c_{\calD}-(\gamma_{\calD}(\bdu)-k_0)<c_{\calD}$. Now, by induction, we can apply \eqref{lex-1} to get
    \[
        \reg(\Delta(\calH))\le \mu_{\calD'}-3<\mu_{\calD}-3,
    \]
    confirming the expected inequality \eqref{lex-3} in this case. This completes our proof of \Cref{lex-layer-1}.
\end{proof}

In the following lemma,
we estimate the regularity associated with the set $\calH$ given by \eqref{eqn:complicated-H}. The proof is similar to that for \Cref{thm-reg}; we still use backward induction with respect to the order introduced in \Cref{set:IO}. Along the proof, we have to reduce the ambient diagram $\widetilde{D}$ to a smaller diagram called $\overline{\calD}$. 
We invite the readers to use the figures regarding $\overline{D}$ in \Cref{ex:overlineD} for reference when reading the proof of \Cref{reg-lem}.   

\begin{Lemma}
    \label{reg-lem}
    Under the assumptions in \Cref{thm-reg}, suppose that $\bdu=(1,j_0,k_0) \in \calD^1$ is a normal point with $k_0\le c_{\calD^\ge 2}$. Let $\calH$ and $\widetilde{\calD}$ be the sets given by \eqref{eqn:complicated-H} and \eqref{eqn:tilde-D} respectively.
    If $\calH^1\ne \varnothing$, then 
    \begin{equation}
        \reg(\Delta(\calH)) \le \mu_{\widetilde{\calD}}-3.
        \label{eqn:reg-ineq-H}
    \end{equation}
\end{Lemma}

\begin{proof}
    For notational simplicity, we may assume that $k_0=\min(\gamma_{\calD}(\bdu),c_{\calD^{\ge 2}})$.  Hence the ambient diagram $\widetilde{\calD}$ is indeed a three-dimensional Ferrers diagram and $c_{\widetilde{\calD}^{\ge 2}}=c_{\calD^{\ge 2}}\ge k_0$. 
    Let $\prec_{\widetilde{\calD}}$ be the induction order with respect to $\widetilde{\calD}$; see also \Cref{set:IO}.
    The subset of $\calH^1$ in the \emph{first stage}, namely $\calC_1(\widetilde{\calD})\cap \calH$, is precisely $\calZ_5^1({\calD},\bdu)\cup \calZ_6^1({\calD},\bdu)$. 
    Certainly, the subset of $\calH^1$ in the \emph{second stage}, namely $\calC_2(\widetilde{\calD})\cap \calH$, is given by $\Set{(1,j,k)\in \widetilde{\calD} : j\le j_0 \text{ and } k>c_{\widetilde{\calD}^{\ge 2}}}$.
    Notice that $\calH=\calA_\bdu^+(\widetilde{\calD})=\calA_{\bdv_0}(\widetilde{\calD})$ for some $\bdv_0$. The following observations are clear.
    \begin{enumerate}[a]
        \item If $\calH^1\cap \calC_1(\widetilde{\calD}) \ne \varnothing$, then  $\bdv_0=(1,j_0+1,1)$ is the initial point of $\calH^1\cap \calC_1(\widetilde{\calD})$. 
        \item If $\calH^1\cap \calC_1(\widetilde{\calD}) = \varnothing$ while $\calH^1\cap \calC_2(\widetilde{\calD}) \ne \varnothing$, then
            $\bdv_0=(1,1,c_{\widetilde{\calD}^{\ge 2}}+1)$ is the initial point of $\calH^1\cap \calC_2(\widetilde{\calD})$.
        \item If $\calH^1=\varnothing$, then  $\bdv_0=(2,1,1)$ is the initial point of $\widetilde{\calD}^{\ge 2}$.
    \end{enumerate}
    Hence, to achieve \eqref{eqn:reg-ineq-H}, it suffices to prove 
    \begin{equation}
        \reg(\Delta(\calA_{\bdv}(\widetilde{\calD}))) \le \mu_{\widetilde{\calD}}-3
        \label{eqn-7}
    \end{equation}
    for each $\bdv\in \calH^1$ in the \emph{first stage} with respect to $\widetilde{D}$. We will prove by backward induction with respect to the induction order $\prec_{\widetilde{\calD}}$.
    \begin{flushleft}
        \textbf{Base case}
        \par\end{flushleft}
    We have two subcases to check.
    \begin{enumerate}[a]
        \item Suppose that the points of the second stage exist, i.e., $\calC_2(\widetilde{\calD})\ne \varnothing$. Then, we need to prove \eqref{eqn-7} for $\bdv=(1,1,c_{\widetilde{\calD}^{\ge 2}}+1)$, the initial element in the second stage.  For that purpose, we flip $\widetilde{\calD}$ to get $\calS(\widetilde{\calD})$ and write it as $\widetilde{\calD}'$ for simplicity.  Then, $\calS(\calA_\bdv(\widetilde{\calD}))$ is precisely $\calA_{(1,b_{(\widetilde{\calD}')^{\ge 2}}+1,1)}(\widetilde{\calD}')$ (the $\calA$ in $\calA_{(1,b_{(\widetilde{\calD}')^{\ge 2}}+1,1)}(\widetilde{\calD}')$ is with respect to the lexicographic order on $\widetilde{\calD}'$).  Since $b_{(\widetilde{\calD}')^{\ge 2}}<b_{\widetilde{\calD}'}$ while $\mu_{\widetilde{\calD}}=\mu_{\widetilde{\calD}'}$,  we can apply \Cref{lex-layer-1} to complete the proof (the conditions of \Cref{lex-layer-1} are satisfied, as mentioned earlier in the proof of \Cref{thm-reg}).   

        \item Suppose instead that $\calC_2(\widetilde{\calD}) = \varnothing$. Then, we need to prove \eqref{eqn-7} for $\bdv=(2,1,1)$, the initial element of the second layer. Since $\calH^1\ne \varnothing$, either $\calZ_5^1(\calD,\bdu)\cup \calZ_6^1(\calD,\bdu)\ne \varnothing$ (hence $b_{\widetilde{\calD}}>b_{\widetilde{\calD}^{\ge 2}}$) or 
            \begin{equation}
                c_{\widetilde{\calD}}>\min(\gamma,c_{\calD^{\ge 2}})=k_0,
                \label{eqn-8}
            \end{equation}
            for $\gamma=\gamma_{\calD}(\bdu)$.
            Notice that since $b_{\widetilde{\calD}}\ge j_0\ge b_{\widetilde{\calD}^{\ge 2}}$, if $b_{\widetilde{\calD}}=b_{\widetilde{\calD}^{\ge 2}}$, then they coincide with $j_0$. Whence, by the projection property of $\widetilde{\calD}$, one must have $\gamma\ge c_{\widetilde{\calD}^{\ge 2}}=c_{\calD^{\ge 2}}$ (the last equality holds since we have assumed that $k_0=\min(\gamma,c_{\calD^{\ge 2}})$ for simplicity). Thus, the inequality in \eqref{eqn-8} will then imply that $c_{\widetilde{\calD}}>c_{\widetilde{\calD}^{\ge 2}}$. 
            In short, we have either $b_{\widetilde{\calD}}>b_{\widetilde{\calD}^{\ge 2}}$ and $c_{\widetilde{\calD}}\ge c_{\widetilde{\calD}^{\ge 2}}$, or $b_{\widetilde{\calD}}\ge b_{\widetilde{\calD}^{\ge 2}}$ and $c_{\widetilde{\calD}}>c_{\widetilde{\calD}^{\ge 2}}$.
            Since $a_{\widetilde{\calD}^{\ge 2}}=a_{\calD}-1$, in either case, one has $\mu_{\widetilde{\calD}^{\ge 2}}<\mu_{\widetilde{\calD}}$, as long as $\calH^1\ne \varnothing$.

            Again, since $a_{\widetilde{\calD}^{\ge 2}}=a_{\calD}-1$, we can assume
            \Cref{thm-reg} for $\widetilde{\calD}^{\ge 2}$ to get
            \[
                \reg(\Delta(\widetilde{\calD}^{\ge 2}))\le \mu_{\widetilde{\calD}^{\ge 2}}-2\le \mu_{\widetilde{\calD}}-3.
            \]  
            Hence the expected inequality \eqref{eqn-7} holds in this subcase. 
    \end{enumerate}
    \begin{flushleft}
        \textbf{Induction step}
        \par\end{flushleft}
    Consider a general $\bdv=(1,j_1,k_1) \in \calH\cap \calC_1(\widetilde{\calD})$.  
    As in the proof of \Cref{thm-reg}, by induction, the proof is reduced to the case when $\bdv$ is a normal point and we only need to show
    \begin{equation*}
        \reg(\link_{\Delta(\calA_{\bdv}(\widetilde{\calD}))}(T_{\bdv}))\le \mu_{\widetilde{\calD}}-4.
    \end{equation*}
    Just as $\calH$ for $\calD$ and $\bdu$, we introduce similarly $\widetilde{\calH}$ for $\widetilde{\calD}$ and $\bdv$. Then the previously expected inequality is simply 
    \begin{equation}
        \reg(\Delta(\widetilde{\calH}))\le \mu_{\widetilde{\calD}}-4.
        \label{eqn:tilde-H}
    \end{equation}
    More explicitly,
    \begin{align*}
        \widetilde{\calH}
        &\coloneqq \calZ_1(\widetilde{D},\bdv)^{\ge 2} \cup \calZ_3(\widetilde{D},\bdv)^{\ge 2} \cup \calZ_5(\widetilde{D},\bdv)^1\cup \calZ_6(\widetilde{D},\bdv)^1 
        \cup \Set{(1,j,k)\in \widetilde{\calD}: j\le j_1 \text{ and }
            k>c_{\widetilde{\calD}^{\ge 2}}}
    \end{align*}
    as in \eqref{eqn:complicated-H}.
    Write $\gamma(\bdv)$ for $\gamma_{\widetilde{\calD}}(\bdv)$.  As $k_1\le \gamma(\bdv)\le k_0\le c_{\widetilde{\calD}^{\ge 2}}=c_{\calD^{\ge 2}}$, we have 
    \begin{align*}
        \Set{(1,j,k)\in \widetilde{\calD}: j\le j_1 \text{ and } k>c_{\widetilde{\calD}^{\ge 2}}} 
        &\subseteq  \Set{(1,j,k)\in \widetilde{\calD}: j\le j_1 \text{ and } k>\gamma(\bdv)} \\
        &=  \Set{(1,j,k)\in \widetilde{\calD}: j< j_1 \text{ and } k>\gamma(\bdv)}.
    \end{align*}
    Furthermore, notice that from \eqref{eqn:tilde-D}, one can say safely that for any $(i,j,k)\in \calZ_1(\widetilde{D},\bdv)^{\ge 2} \cup \calZ_3(\widetilde{D},\bdv)^{\ge 2}$, we have $j\le j_0<j_1$.  Thus, the ambient diagram for $\widetilde{\calH}$ can be chosen as the more concise subset
    \begin{align}
        \overline{\calD} &\coloneqq \calZ_1(\widetilde{\calD},(1,j_1-1,k_1)) \cup
        \calZ_3(\widetilde{\calD},(1,j_1-1,k_1)) \cup \calZ_5(\widetilde{\calD},\bdv)^1 \cup \calZ_6(\widetilde{\calD},\bdv)^1
        \notag \\
        & \qquad \cup \Set{(i,j,k)\in \widetilde{\calD}: j< j_1 \text{ and } k>\gamma(\bdv)}. \label{eqn:D-bar}
    \end{align}
    Again, this diagram is essentially a three-dimensional Ferrers diagram that still satisfies the projection property. 

    If $k_1=\gamma(\bdv)$, as $j_1>j_0\ge b_{\widetilde{\calD}^{\ge 2}}$, $\bdv$ is a phantom point of $\widetilde{\calD}$ by \Cref{dis3.7}. But this is against our assumption on $\bdv$.
    If $k_1<\gamma(\bdv)$, then $c_{\overline{\calD}}=c_{\widetilde{\calD}}-(\gamma(\bdv)-k_1)<c_{\widetilde{\calD}}$. Meanwhile, $b_{\overline{\calD}}=b_{\widetilde{\calD}}-1$. Thus, $\mu_{\overline{\calD}}\le \mu_{\widetilde{\calD}}-1$.
    Since $\overline{\calD}$ is a smaller diagram than $\widetilde{\calD}$, by induction, we can apply the inequality \eqref{eqn:reg-ineq-H} to $\widetilde{\calH}$ with respect to $\overline{\calD}$ to get  
    \begin{equation*}
        \reg(\Delta(\widetilde{\calH}))\le \mu_{\overline{\calD}}-3 \le \mu_{\widetilde{\calD}}-4,
    \end{equation*}
    confirming the expected inequality \eqref{eqn:tilde-H}.
    And this completes our proof of \Cref{reg-lem}. \qedhere   
\end{proof}

\begin{Example}\label{ex:overlineD}
    In the proof of \Cref{reg-lem}, we introduced a concise subdiagram $\overline{\calD}$ in \eqref{eqn:D-bar}.
    As a continuation to \Cref{exam:6.2}, in \Cref{overlineD} we present typical $\overline{\calD}^1$ by the shaded regions with two different cases: 
    $\gamma\leq c_{\calD^{\ge 2}}$ (left-hand side) and $\gamma>c_{\calD^{\ge 2}}$ (right-hand side).
    These diagrams are deduced from the $\widetilde{D}$ in \Cref{exam:6.2}. Unlike in the proof \Cref{reg-lem}, we don't assume that $k_0=\min(\gamma_{\calD}(\bdu),c_{\calD^{\ge 2}})$ here.
    \begin{figure}[htb] 
        \begin{center}
            \scalebox{1.4}{
                \begin{tikzpicture}[thick, scale=0.5, every node/.style={scale=0.6}]
                    \draw (7,0) node [below right]{$y$};
                    \draw (0,6) node [above left]{$z$};  

                    \path (0,0)--(0,4)  node [below left]{$\gamma$};
                    \path (-1,0)--(-1,4)  node [below left]{$\widetilde{\calD}^1$};
                    \path (0,-0.2)--(5,-0.2) node [below left]{$\beta$};

                    \draw (0,4)--(2,4)--(2,5)--(1.5,5)--(1.5,5.5)--(0.5,5.5)--(0.5,6)--(0,6)--cycle;
                    \draw [fill=gray!50](0,4)--(2,4)--(2,5)--(1.5,5)--(1.5,5.5)--(0.5,5.5)--(0.5,6)--(0,6)--cycle; 
                    \draw [dotted] (0,5) -- (1.5,5);
                    \path (0,4.7)--(0,4.7) node [ left]{$c_{\calD^{\geq 2}}$};

                    \draw [fill=gray!50](0,0)--(3,0)--(3,1.5)--(0,1.5)--cycle; 
                    \draw [pattern=crosshatch] (2.5,2.5)--(2,2.5)--(2,2)--(2.5,2)--cycle; 
                    \node [above right] at (4,5) {$(j_0,k_0)$}; 
                    \draw [->, thin] (2.25,2.25) to [bend left] (4,5);
                    \draw [dotted] (2,0) -- (2,5);
                    \path (1.7,0)--(1.7,0) node [ below]{$b_{\calD^{\geq 2}}$};
                    \draw (5,2)--(5,3.5)--(4,3.5)--(4,4)--(2,4); 

                    \draw (0,4)--(0,0)--(5,0);
                    \draw [dotted] (0,2.5)--(5,2.5);
                    \draw [dotted] (2.5,0)--(2.5,4);
                    \draw [dotted] (3.5,1.5)--(3.5,4);

                    \draw [fill=gray!50](3.5,0)--(7,0)--(7,1)--(6.5,1)--(6.5,1.5)--(3.5,1.5)--cycle; 
                    \draw (6.5,0)--(6.5,1);
                    \draw (5,0)--(7,0)--(7,1)--(6.5,1)--(6.5,2)--(5,2);
                    \draw [dotted] (5,2)--(5,0);

                    \draw [pattern=crosshatch] (3,1)--(3,1.5)--(3.5,1.5)--(3.5,1)--cycle; 
                    \node [above right] at (4,-1.75) {$(j_1,k_1)$}; 
                    \draw [->, thin] (3.25,1.25) to [bend right] (4,-1.5);

                    \draw (17,0) node [below right]{$y$};
                    \draw (10,6) node [above left]{$z$};

                    \draw [pattern=crosshatch] (12.5,2.5)--(12,2.5)--(12,2)--(12.5,2)--cycle; 
                    \node [above right] at (14,5) {$(j_0,k_0)$}; 
                    \draw [->, thin] (12.25,2.25) to [bend left] (14,5);     

                    \draw (17,0) node [below right]{$y$};
                    \draw (10,6) node [above left]{$z$};  

                    \path (10,0)--(10,4)  node [below left]{$\gamma$};
                    \path (9,0)--(9,4)  node [below left]{$\widetilde{\calD}^1$};
                    \path (10,-0.2)--(15,-0.2) node [below left]{$\beta$};

                    \draw [fill=gray!50](10,3)--(10,6)--(10.5,6)--(10.5,5.5)--(11.5,5.5)--(11.5,5)--(12,5)--(12,4)--(12.5,4)--(12.5,3)--cycle;
                    \draw [dotted] (10,4)--(12,4);

                    \draw [dotted] (10,3) -- (11.5,3);
                    \path (10,2.7)--(10,2.7) node [ left]{$c_{\calD^{\geq 2}}$};

                    \draw [fill=gray!50](10,0)--(13,0)--(13,1.5)--(10,1.5)--cycle;
                    \draw [dotted] (13.5,4)--(13.5,1.5);
                    \draw [dotted] (12.5,0)--(12.5,3);

                    \draw (10,3)--(10,0)--(15,0);
                    \draw [dotted] (10,2.5)--(15,2.5);

                    \draw [pattern=crosshatch] (12.5,2.5)--(12,2.5)--(12,2)--(12.5,2)--cycle; 
                    \node [above right] at (14,5) {$(j_0,k_0)$}; 
                    \draw [->, thin] (12.25,2.25) to [bend left] (14,5);

                    \draw [dotted] (12,0) -- (12,5);
                    \path (11.7,0)--(11.7,0) node [ below]{$b_{\calD^{\geq 2}}$};
                    \draw (15,2)--(15,3.5)--(14,3.5)--(14,4)--(12.5,4);
                    \draw (15,0)--(17,0)--(17,1)--(16.5,1)--(16.5,2)--(15,2); 
                    \draw [fill=gray!50](13.5,0)--(13.5,1.5)--(16.5,1.5)--(16.5,1)--(17,1)--(17,0)--cycle;
                    \draw [dotted] (15,0)--(15,2);
                    \draw (16.5,0)--(16.5,1);

                    \draw [pattern=crosshatch] (13,1)--(13,1.5)--(13.5,1.5)--(13.5,1)--cycle; 
                    \node [above right] at (14,-1.75) {$(j_1,k_1)$}; 
                    \draw [->, thin] (13.25,1.25) to [bend right] (14,-1.5); 
                \end{tikzpicture}}
        \end{center}
        \caption{$\overline{\calD}^1$}\label{overlineD}
    \end{figure}
\end{Example}

Finally, we wrap up this paper with some quick observations.  Recall that a simplicial complex $\Delta$ is called \emph{acyclic} (over $\KK$) if all the reduced simplicial homology groups $\widetilde{H}_i(\Delta)$ are trivial. Cones are known to be acyclic.

\begin{Corollary}
    Let $\calD$ be a three-dimensional Ferrers diagram that satisfies the projection property. 
    Then the associated Stanley--Reisner complex $\Delta(\calD)$ is acyclic.
\end{Corollary}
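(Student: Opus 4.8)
The plan is to combine \Cref{thm-reg} with the Cohen--Macaulay form of Hochster's acyclicity criterion recalled immediately above, so that only an elementary comparison of numerical invariants is left to check. First I would recall from \cite[Theorem 4.1]{arXiv:1709.03251} that, under the projection property, $\Delta(\calD)$ is pure vertex-decomposable of dimension $a_{\calD}+b_{\calD}+c_{\calD}-3$; in particular $\KK[\Delta(\calD)]$ is Cohen--Macaulay of Krull dimension $\dim(\KK[\Delta(\calD)])=a_{\calD}+b_{\calD}+c_{\calD}-2$. Since $\KK[\Delta(\calD)]$ is Cohen--Macaulay, the criterion says that $\Delta(\calD)$ is acyclic if and only if $\reg(\KK[\Delta(\calD)])<\dim(\KK[\Delta(\calD)])$, so it suffices to establish this strict inequality.

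Next I would invoke \Cref{thm-reg}, which gives $\reg(\Delta(\calD))\le \mu_{\calD}-2$ with $\mu_{\calD}=\min(a_{\calD}+b_{\calD},a_{\calD}+c_{\calD},b_{\calD}+c_{\calD})$. The smallest of the three pairwise sums is the one obtained by discarding the largest essential length, i.e.\ $\mu_{\calD}=a_{\calD}+b_{\calD}+c_{\calD}-\max(a_{\calD},b_{\calD},c_{\calD})$. Because $\calD$ is a nonempty diagram, each of $a_{\calD},b_{\calD},c_{\calD}$ is at least $1$, hence $\max(a_{\calD},b_{\calD},c_{\calD})\ge 1$ and $\mu_{\calD}\le a_{\calD}+b_{\calD}+c_{\calD}-1$. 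Recalling the paper's convention $\reg(\Delta(\calD))=\reg(\KK[\Delta(\calD)])$, we conclude
\[
    \reg(\KK[\Delta(\calD)])\le \mu_{\calD}-2\le a_{\calD}+b_{\calD}+c_{\calD}-3< a_{\calD}+b_{\calD}+c_{\calD}-2=\dim(\KK[\Delta(\calD)]),
\]
which is exactly the inequality required by the criterion, so $\Delta(\calD)$ is acyclic.

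There is essentially no obstacle here: all of the analytic content is already packaged in \Cref{thm-reg}, and what remains is the trivial observation that $\min(a+b,a+c,b+c)<a+b+c$ whenever $a,b,c\ge 1$. The only points that deserve a line of care are the bookkeeping between invariants of the complex and of its Stanley--Reisner ring (notably the shift by $1$ between $\dim\Delta(\calD)$ and $\dim\KK[\Delta(\calD)]$), and the reminder that vertex-decomposability already supplies the Cohen--Macaulayness needed to invoke Hochster's theorem in its ``if and only if'' form.
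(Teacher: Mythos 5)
Your argument is exactly the paper's: the corollary is stated as an immediate consequence of \Cref{thm-reg} together with the Hochster criterion recalled just before it, using that pure vertex-decomposability (from the earlier paper) gives Cohen--Macaulayness and $\dim\KK[\Delta(\calD)]=a_{\calD}+b_{\calD}+c_{\calD}-2$, so that $\reg\le\mu_{\calD}-2\le a_{\calD}+b_{\calD}+c_{\calD}-3<\dim$. The proposal is correct and follows essentially the same route, with the numerical comparison spelled out explicitly.
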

\begin{proof}
    Notice that if the Stanley--Reisner ring $\KK[\Delta]$ of a simplicial complex $\Delta$ of dimension $d$ is Cohen--Macaulay, then by the Reisner's criterion \cite[Theorem 8.1.6]{MR2724673}, $\widetilde{H}_i(\Delta)=0$ for all $i<d$. Meanwhile, it follows from Hochster’s formula on the local cohomology modules \cite[Theorem A.7.3]{MR2724673} that $\reg(\KK[\Delta])=\max\Set{j:\widetilde{H}_{j-1}(\link_\Delta(F;\KK))\ne 0 \text{ for some $F\in \Delta$}}$; see also the proof of \cite[Proposition 8.1.10]{MR2724673}. Whence, $\reg(\KK[\Delta])\le  d=(\dim(\KK[\Delta])-1)$ if and only if $\widetilde{H}_d(\Delta)=0$.

    We have shown in \cite[Theorem 4.1]{arXiv:1709.03251} that $\Delta(\calD)$ is Cohen--Macaulay of dimension $a_\calD+b_\calD+c_\calD-3$. Hence, the claimed result follows from \Cref{thm-reg}.
\end{proof}

\begin{acknowledgment*}
    The authors sincerely thank the patient reviewer for the very helpful and constructive suggestions that greatly improve the presentation of this manuscript. The second author was partially supported by the ``Anhui Initiative in Quantum Information Technologies'' (No.~AHY150200) and ``Fundamental Research Funds for the Central Universities''.
\end{acknowledgment*}

\begin{bibdiv}
\begin{biblist}

\bib{MR3595300}{article}{
      author={Biermann, Jennifer},
      author={O'Keefe, Augustine},
      author={Van~Tuyl, Adam},
       title={Bounds on the regularity of toric ideals of graphs},
        date={2017},
        ISSN={0196-8858},
     journal={Adv. in Appl. Math.},
      volume={85},
       pages={84\ndash 102},
}

\bib{MR2508056}{book}{
      author={Bruns, Winfried},
      author={Gubeladze, Joseph},
       title={Polytopes, rings, and {K}-theory},
      series={Springer Monographs in Mathematics},
   publisher={Springer},
     address={Dordrecht},
        date={2009},
        ISBN={978-0-387-76355-2},
}

\bib{MR2095860}{article}{
      author={Butler, Fred},
       title={Rook theory and cycle-counting permutation statistics},
        date={2004},
        ISSN={0196-8858},
     journal={Adv. in Appl. Math.},
      volume={33},
       pages={655\ndash 675},
         url={https://doi.org/10.1016/j.aam.2004.03.004},
}

\bib{arXiv1805.11923}{article}{
      author={Conca, Aldo},
      author={Varbaro, Matteo},
       title={Square-free {G}r\"{o}bner degenerations},
        date={2020},
        ISSN={0020-9910},
     journal={Invent. Math.},
      volume={221},
      number={3},
       pages={713\ndash 730},
         url={https://doi.org/10.1007/s00222-020-00958-7},
      review={\MR{4132955}},
}

\bib{MR2457403}{article}{
      author={Corso, Alberto},
      author={Nagel, Uwe},
       title={Monomial and toric ideals associated to {F}errers graphs},
        date={2009},
        ISSN={0002-9947},
     journal={Trans. Amer. Math. Soc.},
      volume={361},
       pages={1371\ndash 1395},
}

\bib{CNPY}{article}{
      author={Corso, Alberto},
      author={Nagel, Uwe},
      author={Petrovi\'c, Sonja},
      author={Yuen, Cornelia},
       title={Blow-up algebras, determinantal ideals, and
  {D}edekind--{M}ertens-like formulas},
        date={2017},
        ISSN={0933-7741},
     journal={Forum Math.},
      volume={29},
       pages={799\ndash 830},
}

\bib{MR1600012}{article}{
      author={Corso, Alberto},
      author={Vasconcelos, Wolmer~V.},
      author={Villarreal, Rafael~H.},
       title={Generic {G}aussian ideals},
        date={1998},
        ISSN={0022-4049},
     journal={J. Pure Appl. Algebra},
      volume={125},
       pages={117\ndash 127},
}

\bib{arXiv:1806.08184}{article}{
      author={Cox, David~A.},
      author={Lin, Kuei-Nuan},
      author={Sosa, Gabriel},
       title={Multi-{R}ees algebras and toric dynamical systems},
        date={2019},
        ISSN={0002-9939},
     journal={Proc. Amer. Math. Soc.},
      volume={147},
      number={11},
       pages={4605\ndash 4616},
         url={https://doi.org/10.1090/proc/14579},
      review={\MR{4011498}},
}

\bib{zbMATH05303649}{book}{
      author={{Drton}, Mathias},
      author={{Sturmfels}, Bernd},
      author={{Sullivant}, Seth},
       title={{Lectures on algebraic statistics.}},
    language={English},
   publisher={Basel: Birkh\"auser},
        date={2009},
        ISBN={978-3-7643-8904-8/pbk},
}

\bib{MR1948662}{article}{
      author={Eto, Kazufumi},
       title={Multiplicity and {H}ilbert-{K}unz multiplicity of monoid rings},
        date={2002},
        ISSN={0387-3870},
     journal={Tokyo J. Math.},
      volume={25},
      number={2},
       pages={241\ndash 245},
         url={https://doi.org/10.3836/tjm/1244208851},
      review={\MR{1948662}},
}

\bib{MR4143385}{article}{
      author={Favacchio, Giuseppe},
      author={Keiper, Graham},
      author={Van~Tuyl, Adam},
       title={Regularity and {$h$}-polynomials of toric ideals of graphs},
        date={2020},
        ISSN={0002-9939},
     journal={Proc. Amer. Math. Soc.},
      volume={148},
      number={11},
       pages={4665\ndash 4677},
         url={https://doi.org/10.1090/proc/15126},
      review={\MR{4143385}},
}

\bib{MR4026786}{article}{
      author={Galetto, Federico},
      author={Hofscheier, Johannes},
      author={Keiper, Graham},
      author={Kohne, Craig},
      author={Van~Tuyl, Adam},
      author={Paczka, Miguel Eduardo~Uribe},
       title={Betti numbers of toric ideals of graphs: a case study},
        date={2019},
        ISSN={0219-4988},
     journal={J. Algebra Appl.},
      volume={18},
      number={12},
       pages={1950226, 14},
         url={https://doi.org/10.1142/S0219498819502268},
      review={\MR{4026786}},
}

\bib{MR2179639}{article}{
      author={Gitler, Isidoro},
      author={Valencia, Carlos~E.},
       title={Multiplicities of edge subrings},
        date={2005},
        ISSN={0012-365X},
     journal={Discrete Math.},
      volume={302},
      number={1-3},
       pages={107\ndash 123},
         url={https://doi.org/10.1016/j.disc.2004.07.029},
      review={\MR{2179639}},
}

\bib{MR0429578}{article}{
      author={Goldman, Jay~R.},
      author={Joichi, J.~T.},
      author={White, Dennis~E.},
       title={Rook theory. {I}. {R}ook equivalence of {F}errers boards},
        date={1975},
        ISSN={0002-9939},
     journal={Proc. Amer. Math. Soc.},
      volume={52},
       pages={485\ndash 492},
         url={https://doi.org/10.2307/2040190},
}

\bib{MR4123732}{article}{
      author={Greif, Zachary},
      author={McCullough, Jason},
       title={Green-{L}azarsfeld condition for toric edge ideals of bipartite
  graphs},
        date={2020},
        ISSN={0021-8693},
     journal={J. Algebra},
      volume={562},
       pages={1\ndash 27},
         url={https://doi.org/10.1016/j.jalgebra.2020.06.016},
      review={\MR{4123732}},
}

\bib{arXiv:1703.08270}{article}{
      author={H\`a, Huy~T\`ai},
      author={Beyarslan, Selvi~Kara},
      author={O'Keefe, Augustine},
       title={Algebraic properties of toric rings of graphs},
        date={2019},
        ISSN={0092-7872},
     journal={Comm. Algebra},
      volume={47},
      number={1},
       pages={1\ndash 16},
         url={https://doi.org/10.1080/00927872.2018.1439047},
      review={\MR{3924764}},
}

\bib{arXiv:1301.6779}{article}{
      author={H{\`a}, Huy~T{\`a}i},
      author={Woodroofe, Russ},
       title={Results on the regularity of square-free monomial ideals},
        date={2014},
        ISSN={0196-8858},
     journal={Adv. in Appl. Math.},
      volume={58},
       pages={21\ndash 36},
}

\bib{arXiv:1808.06487}{incollection}{
      author={Hansen, Johan~P.},
       title={Toric surfaces, linear and quantum codes secret sharing and
  decoding},
        date={2022},
   booktitle={Facets of algebraic geometry. {V}ol. {I}},
      series={London Math. Soc. Lecture Note Ser.},
      volume={472},
   publisher={Cambridge Univ. Press, Cambridge},
       pages={371\ndash 402},
      review={\MR{4381907}},
}

\bib{MR2724673}{book}{
      author={Herzog, J{\"u}rgen},
      author={Hibi, Takayuki},
       title={Monomial ideals},
      series={Graduate Texts in Mathematics},
   publisher={Springer-Verlag London Ltd.},
     address={London},
        date={2011},
      volume={260},
        ISBN={978-0-85729-105-9},
}

\bib{LinShenCollect}{article}{
      author={Lin, {K}uei-{N}uan},
      author={Shen, {Y}i-{H}uang},
       title={Blow-up algebras of secant varieties of rational normal scrolls},
        date={2022},
     journal={Collectanea Mathematica},
         url={https://doi.org/10.1007/s13348-021-00345-2},
}

\bib{arXiv:1709.03251}{article}{
      author={Lin, {K}uei-{N}uan},
      author={Shen, {Y}i-{H}uang},
       title={{Koszul blowup algebras associated to three-dimensional {F}errers
  diagrams}},
        date={2018},
        ISSN={0021-8693},
     journal={Journal of Algebra},
      volume={514},
       pages={219\ndash 253},
  url={http://www.sciencedirect.com/science/article/pii/S0021869318304903},
}

\bib{zbMATH06454819}{article}{
      author={Morales, Marcel},
      author={Dung, Nguyen~Thi},
       title={Segre product, {H}-polynomials, and {C}astelnuovo-{M}umford
  regularity},
        date={2015},
        ISSN={0251-4184},
     journal={Acta Math. Vietnam.},
      volume={40},
       pages={111\ndash 124},
}

\bib{MR2426505}{article}{
      author={Nagel, Uwe},
      author={R{\"o}mer, Tim},
       title={Glicci simplicial complexes},
        date={2008},
        ISSN={0022-4049},
     journal={J. Pure Appl. Algebra},
      volume={212},
       pages={2250\ndash 2258},
}

\bib{MR3438312}{article}{
      author={Searles, Dominic},
      author={Yong, Alexander},
       title={Root-theoretic {Y}oung diagrams and {S}chubert calculus:
  planarity and the adjoint varieties},
        date={2016},
        ISSN={0021-8693},
     journal={J. Algebra},
      volume={448},
       pages={238\ndash 293},
         url={https://doi.org/10.1016/j.jalgebra.2015.09.039},
      review={\MR{3438312}},
}

\bib{MR2039975}{incollection}{
      author={Sottile, Frank},
       title={Toric ideals, real toric varieties, and the moment map},
        date={2003},
   booktitle={Topics in algebraic geometry and geometric modeling},
      series={Contemp. Math.},
      volume={334},
   publisher={Amer. Math. Soc., Providence, RI},
       pages={225\ndash 240},
}

\bib{MR1363949}{book}{
      author={Sturmfels, Bernd},
       title={Gr\"obner bases and convex polytopes},
      series={University Lecture Series},
   publisher={American Mathematical Society},
     address={Providence, RI},
        date={1996},
      volume={8},
        ISBN={0-8218-0487-1},
}

\bib{MR1492542}{incollection}{
      author={Sturmfels, Bernd},
       title={Equations defining toric varieties},
        date={1997},
   booktitle={Algebraic geometry---{S}anta {C}ruz 1995},
      series={Proc. Sympos. Pure Math.},
      volume={62},
   publisher={Amer. Math. Soc., Providence, RI},
       pages={437\ndash 449},
}

\bib{MR2611561}{book}{
      author={Taylor, Diana~Kahn},
       title={Ideals generated by monomials in an {R}-sequence},
   publisher={ProQuest LLC, Ann Arbor, MI},
        date={1966},
        note={Thesis (Ph.D.)--The University of Chicago},
}

\bib{MR3362802}{book}{
      author={Villarreal, Rafael~H.},
       title={Monomial algebras},
     edition={Second},
      series={Monographs and Research Notes in Mathematics},
   publisher={CRC Press, Boca Raton, FL},
        date={2015},
        ISBN={978-1-4822-3469-5},
      review={\MR{3362802}},
}

\end{biblist}
\end{bibdiv}

\end{document}